\documentclass[onefignum,onetabnum]{siamart190516}

\usepackage{}
\usepackage{graphicx}
\usepackage{multirow,bigstrut}
\usepackage{amsmath,amsfonts,amssymb}
\usepackage{mathrsfs}
\usepackage{color}
\usepackage{xcolor}
\usepackage{upgreek}
\usepackage{bm}
\usepackage{verbatim}
\usepackage{mathtools}
\usepackage{calligra}
\usepackage[T1]{fontenc}
\usepackage{subfigure}
\usepackage{pgfplots}
\usepackage{amsmath,accents}
\newcommand{\cm}[1]{{\color{black}{#1}}}

\usepackage{tikz}
  \usetikzlibrary{calc,shapes,decorations,decorations.text, mindmap,shadings,patterns,matrix,arrows,intersections,automata,backgrounds}
  
\usetikzlibrary{shapes,arrows}

\usepackage{verbatim}
\usepackage{exscale}
\usepackage{relsize}
\newtheorem{rem}[theorem]{\hspace{1mm}Remark}

\newtheorem{assumption}[theorem]{Assumption}

\makeatletter
\tikzset{
        hatch distance/.store in=\hatchdistance,
        hatch distance=5pt,
        hatch thickness/.store in=\hatchthickness,
        hatch thickness=5pt
        }
\pgfdeclarepatternformonly[\hatchdistance,\hatchthickness]{north east hatch}
    {\pgfqpoint{-1pt}{-1pt}}
    {\pgfqpoint{\hatchdistance}{\hatchdistance}}
    {\pgfpoint{\hatchdistance-1pt}{\hatchdistance-1pt}}%
    {
        \pgfsetcolor{\tikz@pattern@color}
        \pgfsetlinewidth{\hatchthickness}
        \pgfpathmoveto{\pgfqpoint{0pt}{0pt}}
        \pgfpathlineto{\pgfqpoint{\hatchdistance}{\hatchdistance}}
        \pgfusepath{stroke}
    }
\makeatother

\newcommand{\interior}[1]{\accentset{\smash{\raisebox{-0.12ex}{$\scriptstyle\circ$}}}{#1}\rule{0pt}{2.3ex}}

\begin{document}
 

\headers{Error estimates for discrete GFEMs}{C. P. Ma and R. Scheichl} 
 
\title{Error estimates for fully discrete generalized FEMs with locally optimal spectral approximations}

\author{Chupeng Ma\thanks 
    {Institute for Applied Mathematics and Interdisciplinary Center for Scientific Computing, Heidelberg University, Im Neuenheimer Feld 205, Heidelberg 69120, Germany (\email{chupeng.ma@uni-heidelberg.de}, \email{r.scheichl@uni-heidelberg.de}).}
 \and R. Scheichl\footnotemark[1]
}
\maketitle

\begin{abstract}
This paper is concerned with error estimates of the fully discrete generalized finite element method (GFEM) with optimal local approximation spaces for solving elliptic problems with heterogeneous coefficients. The local approximation spaces are constructed using eigenvectors of local eigenvalue problems solved by the finite element method on some sufficiently fine mesh with mesh size $h$. The error bound of the discrete GFEM approximation is proved to converge as $h\rightarrow 0$ towards that of the continuous GFEM approximation, which was shown to decay nearly exponentially in previous works. Moreover, even for fixed mesh size $h$, a nearly exponential rate of convergence of the local approximation errors with respect to the dimension of the local spaces is established. An efficient and accurate method for solving the discrete eigenvalue problems is proposed by incorporating the discrete $A$-harmonic constraint directly into the eigensolver. Numerical experiments are carried out to confirm the theoretical results and to demonstrate the effectiveness of the method.
\end{abstract}

\begin{keywords}
generalized finite element method, error estimates, eigenvalue problem, multiscale method, local spectral basis
\end{keywords}

\begin{AMS}
65M60, 65N15, 65N55
\end{AMS}

\section{Introduction}\label{sec-1}
Multiscale problems are ubiquitous in science and engineering. Typical examples are flow and transport phenomena in heterogeneous porous media which usually span over a wide range of length scales, from the pore scale ($\sim$\,$\mathrm{\mu}$m), to the Darcy scale ($\sim$\,cm), all the way to the field scale ($\sim$\,km). A direct numerical simulation of multiscale problems with standard methods, taking into account all fine-scale details, is prohibitively expensive, because a very fine spatial discretization would be required. In order to resolve large-scale features of solutions without "pollution" due to the corresponding fine-scale details, various multiscale methods have been developed in different communities in the past several decades. Here we focus on multiscale methods that are based on numerical upscaling by incorporating fine-scale information into coarse-scale models. Such methods include the multiscale finite element method (MsFEM) \cite{hou1997multiscale,chen2003mixed,efendiev2009multiscale}, the generalized multiscale finite element method (GMsFEM) \cite{efendiev2013generalized,efendiev2014generalized,chung2015mixed}, the heterogeneous multiscale method (HMM) \cite{ming2005analysis,weinan2007heterogeneous}, and the localized orthogonal decomposition (LOD) \cite{maalqvist2014localization,henning2014localized}, to cite a few. 

In this paper, we concern ourselves with another related multiscale method, the Multiscale Spectral Generalized Finite Element Method (MS-GFEM) proposed in the pioneering paper of Babuska and Lipton \cite{babuska2011optimal}. It is a generalized finite element method (GFEM) \cite{babuvska1997partition,melenk1995generalized} with optimal local approximation spaces constructed by the eigenvectors of local eigenvalue problems. These local approximation spaces are glued together by a partition of unity to form the trial space on which the coarse problem is solved. In \cite{babuska2011optimal}, an eigenvalue problem associated with the singular value decomposition of a restriction operator was used to build the local approximation spaces for solving second-order elliptic problems with heterogeneous coefficients. A nearly exponential decay rate of the local approximation errors with respect to the dimension of the local spaces was established. The method was later generalized to solve elasticity problems \cite{babuvska2014machine} and the numerical implementation of the method was discussed in \cite{babuvska2020multiscale}. In our recent work \cite{ma2021novel}, the results in \cite{babuska2011optimal} were extended in several respects:\;($i$) a new local eigenvalue problem, directly incorporating the partition of unity function was used to construct the optimal local approximation spaces with some advantages over those in \cite{babuska2011optimal}; ($ii$) a sharper error bound of the local approximations was derived that demonstrates the influence of the oversampling size; ($iii$) the theoretical results in \cite{babuska2011optimal} were extended to problems with mixed boundary conditions defined on general Lipschitz domains.

In practical computations, the local eigenvalue problems in the MS-GFEM need to be solved by numerical methods (e.g., FEM), giving rise to the discrete MS-GFEM. However, the theoretical results in \cite{babuska2011optimal,ma2021novel} are only concerned with the error estimates of the method at the continuous level. In \cite{babuvska2014machine}, the error estimates of the discrete MS-GFEM for elasticity problems with a FE approximation of the local eigenvalue problems were discussed. Due to a heavy reliance on the approximation results for the continuous eigenfunctions in the analysis, the error estimates in \cite{babuvska2014machine} hold only under some unjustified assumptions. A rigorous theoretical analysis of the discrete MS-GFEM without artificial assumptions is thus still missing. 

Another important issue concerning the discrete MS-GFEM is the high cost associated with the construction of the discrete $A$-harmonic spaces on which the local eigenvalue problems are posed. These spaces are spanned by the discrete $A$-harmonic extensions of the hat functions associated with the nodes on the subdomain boundary and thus in general, a large number of local boundary value problems need to be solved. In \cite{babuska2011optimal,babuvska2020multiscale}, it was suggested to reduce this cost by restricting to the span of the discrete $A$-harmonic extensions of some special functions defined on the boundary, thus only approximating the discrete $A$-harmonic spaces. Random sampling techniques were also exploited to generate the basis functions of these spaces \cite{calo2016randomized,chen2020randomized}. In \cite{ma2021novel}, approximations of the discrete $A$-harmonic spaces were constructed using eigenfunctions of a Steklov eigenvalue problem. All these strategies are based on approximating the discrete $A$-harmonic spaces and thus have two drawbacks:\;(i) additional errors in approximating the discrete $A$-harmonic spaces are introduced to the method and (ii) the inefficiencies are not really resolved. Inefficiency also arises in the context of a practical interesting adaptive implementation of the method. In particular, when more eigenfunctions are required to enrich the local approximation spaces, the approximations of the discrete $A$-harmonic spaces also may need to be enriched or reconstructed before the eigenvalue problems can be solved again.

In this paper, the error estimates of the discrete MS-GFEM with approximation of the local eigenvalue problems in some fine-scale finite element space $V_{h}$ ($h$ denoting the mesh size) are derived without any unjustified assumptions. It is shown that the error of the discrete MS-GFEM is bounded by the sum of the FE error in $V_{h}$ and the errors arising from the local approximations of this fine-scale FE solution. We prove that as $h\rightarrow 0$, the local errors in the discrete MS-GFEM converge towards those in the continuous MS-GFEM, which were shown to decay nearly exponentially in \cite{ma2021novel}. As a result, the global error bound of the discrete MS-GFEM converges towards that of the continuous method as $h\rightarrow 0$. While the analysis in \cite{babuvska2014machine} relies heavily on bounding the eigenfunction error, our analysis is based on the convergence of the eigenvalues of the discrete eigenvalue problems, using a theoretical framework developed in the context of homogenization theory \cite{jikov2012homogenization}. There are two interesting features of the local eigenvalue problems that make the convergence analysis challenging. Firstly, due to the $A$-harmonicity, the discrete eigenvalue problems are non-conforming approximations of the continuous eigenvalue  problems. Secondly, the compact operators associated with the discrete eigenvalue problems are defined on function spaces that differ significantly from those used in classical FE approximations of PDE eigenvalue problems. Finally, similar to the continuous problem, we establish a nearly exponential decay rate of the local errors with respect to the dimension of the local spaces in the fully discrete setting, which is missing in previous studies. Due to the nearly exponential decay rate, the discrete MS-GFEM approximation can yield good results with a very small number of eigenfunctions per subdomain.

In addition to providing new error estimates for the discrete MS-GFEM, an efficient and accurate method for solving the discrete eigenvalue problems is developed in this paper. In our method, the eigenvalue problem is first rewritten in mixed form by introducing a Lagrange multiplier to relax the $A$-harmonic constraint. Instead of solving the discrete eigenvalue problem in mixed formulation directly, we perform a Cholesky (LDL) factorization and then solve an equivalent reduced problem with half the size. By incorporating the $A$-harmonic constraint into the eigenproblems instead of approximating the $A$-harmonic spaces in advance, our method is more accurate and efficient than all previous techniques. Furthermore, it paves the way for an efficient implementation of truly adaptive versions of the MS-GFEM. Indeed, provided the solutions of the discrete eigenvalue problems are sufficiently accurate, the local approximation error in each subdomain is controlled by the eigenvalue corresponding to the first eigenfunction that is not included in the local approximation space. This enables an adaptive selection of the number of eigenfunctions used in each subdomain since an upper bound of the local approximation error can be estimated reliably $a \;posteriori$ from the computed eigenvalues.

The rest of this paper is organized as follows. In \cref{sec-2}, the problem considered in this paper and the (continuous) MS-GFEM are introduced. In \cref{sec-3}, the discrete MS-GFEM is described in detail and some technical tools used in the error analysis are provided. \Cref{sec-4} is devoted to the convergence analysis of the eigenvalue problems and the proof of the nearly exponential decay rate of the local approximation errors. The new technique for solving the discrete eigenvalue problems is presented in \cref{sec-5}. Numerical experiments are provided in \cref{sec-6} to verify the theoretical analysis and to demonstrate the effectiveness of the method.

\section{Problem formulation and the MS-GFEM}\label{sec-2}
\subsection{Problem formulation}
Let $\Omega\subset \mathbb{R}^{d}$, $d=2,3$ be a bounded polygonal ($d=2$) or polyhedral ($d=3$) domain with Lipschitz boundary $\partial \Omega$. We consider the elliptic partial differential equation with mixed boundary conditions
\begin{equation}\label{eq:1-1}
\left\{
\begin{array}{lll}
{\displaystyle -{\rm div}(A({\bm x})\nabla u({\bm x})) = f({\bm x}),\quad {\rm in}\;\, \Omega }\\[2mm]
{\displaystyle {\bm n} \cdot A({\bm x})\nabla u({\bm x})=g({\bm x}), \quad \quad \;\;{\rm on}\;\,\partial \Omega_{N}}\\[2mm]
{\displaystyle u({\bm x}) = q({\bm x}), \quad \qquad \qquad \qquad {\rm on}\;\,\partial \Omega_{D},}
\end{array}
\right.
\end{equation}
where ${\bm n}$ denotes the unit outward normal vector to the boundary, $\partial\Omega_{D}\cap \partial\Omega_{N} = \emptyset$, and $\overline{\partial\Omega_{D}}\cup \overline{\partial\Omega_{N}} = \partial \Omega$. Throughout the paper we make the following assumptions on the problem:

\begin{assumption}\label{ass:1} $(i)$ $A({\bm x}) \in (L^{\infty}(\Omega))^{d\times d}$ is pointwise symmetric and there exists $0< \alpha < \beta$ such that
\begin{equation}\label{eq:1-1-0}
\alpha |{\bm \xi}|^{2} \leq A({\bm x}){\bm \xi}\cdot{\bm \xi} \leq \beta  |{\bm \xi}|^{2},\quad \forall {\bm \xi}\in \mathbb{R}^{d},\quad {\bm x} \in\Omega;
\end{equation}

$(ii)$ $f\in L^{2}(\Omega)$, $g\in H^{-1/2}(\partial \Omega_{N})$, $q\in H^{1/2}(\partial \Omega_{D})$.
\end{assumption}

The weak formulation of problem \cref{eq:1-1} is to find $u^{e}\in H^{1}(\Omega)$ with $u^{e} = q({\bm x})$ on $\partial \Omega_{D}$ such that 
\begin{equation}\label{eq:1-2}
a(u^{e},v) = F(v),\quad \forall v\in H^{1}_{D}(\Omega),
\end{equation}
where 
\begin{equation}\label{eq:1-2-0}
H^{1}_{D}(\Omega) = \big\{v\in H^{1}(\Omega)\;:\; v = 0 \;\;{\rm on}\;\,\partial \Omega_{D}\big\},
\end{equation}
and the bilinear form $a(\cdot,\cdot)$ and the functional $F$ are defined by
\begin{equation}\label{eq:1-2-1}
a(u,v) = \int_{\Omega} A({\bm x})\nabla u\cdot {\nabla v} \,d{\bm x},\quad F(v) = \int_{\partial \Omega_{N}} gv \,d{\bm s}+ \int_{\Omega}fv \,d{\bm x}.
\end{equation}
Under \cref{ass:1}, the weak formulation of problem \cref{eq:1-1} is uniquely solvable. For later use, we define 
\begin{equation}\label{eq:1-2-2}
a_{\omega}(u,v) = \int_{\omega} A({\bm x})\nabla u\cdot {\nabla v} \,d{\bm x}, \quad F_{\omega}(v) = \int_{\partial \omega\cap\partial \Omega_{N}} gv \,d{\bm s}+ \int_{\omega}fv \,d{\bm x},
\end{equation}
and $\Vert u \Vert_{a,\,\omega} = \sqrt{a_{\omega}(u,u)}$ for any subdomain $\omega\subset \Omega$ and $u$, $v\in H^{1}(\omega)$. If $\omega = \Omega$, we drop the domain and write $a(\cdot,\cdot)$ and $\Vert\cdot\Vert_{a}$ instead of $a_{\Omega}(\cdot,\cdot)$ and $\Vert\cdot\Vert_{a,\,\Omega}$.

We now briefly describe the GFEM for solving \cref{eq:1-2}. Let $\{ \omega_{i} \}_{i=1}^{M}$ be a collection of open sets satisfying $\omega_{i}\subset\Omega$ and $\cup_{i=1}^{M} \omega_{i} = \Omega$. We assume that each point ${\bm x}\in\Omega$ belongs to at most $\kappa$ subdomains $\omega_{i}$. Let $\{ \chi_{i} \}_{i=1}^{M}$ be a partition of unity subordinate to the open covering satisfying the following properties:
\begin{equation}\label{eq:1-3}
\begin{array}{lll}
{\displaystyle 0\leq \chi_{i}({\bm x})\leq 1,\quad \sum_{i=1}^{M}\chi_{i}({\bm x}) =1, \quad \forall \,{\bm x}\in \Omega,}\\[4mm]
{\displaystyle \chi_{i}({\bm x})= 0, \quad \forall \,{\bm x}\in \Omega/\omega_{i}, \quad i=1,\cdots,M,}\\[2mm]
{\displaystyle \chi_{i}\in C^{1}(\omega_{i}),\;\;\max_{{\bm x}\in\Omega} |\nabla \chi_{i}({\bm x}) | \leq \frac{C_{1}}{diam\,(\omega_{i})},\quad i=1,\cdots,M.}
\end{array}
\end{equation}

For $i=1,\cdots,M$, let $u^{p}_{i} \in H^{1}(\omega_{i})$ be a local particular function and $S_{n_{i}}(\omega_{i})\subset H^{1}(\omega_{i})$ be a local approximation space of dimension $n_{i}$. For a subdomain $\omega_{i}$ that intersects the Dirichlet boundary $\partial \Omega_{D}$, we additionally assume that $u^{p}_{i} = q({\bm x})$ on $\partial \omega_{i}\cap \partial \Omega_{D}$ and that functions in $S_{n_{i}}(\omega_{i})$ vanish on $\partial \omega_{i}\cap \partial \Omega_{D}$. A key step of the GFEM is to build the global particular function $u^{p}$ and the trial space $S_{n}(\Omega)$ by pasting together the local particular functions and the local approximation spaces by using the partition of unity:
\begin{equation}\label{eq:1-3-0}
\begin{array}{lll}
{\displaystyle u^{p} = \sum_{i=1}^{M}\chi_{i}u^{p}_{i},\quad  S_{n}(\Omega) =\Big\{\sum_{i=1}^{M}\chi_{i}\phi_{i}\,:\, \phi_{i}\in S_{n_{i}}(\omega_{i})\Big\}. }
\end{array}
\end{equation}
It is easy to see that $u^{p}\in H^{1}(\Omega)$, $u^{p}=q({\bm x})$ on $\partial \Omega_{D}$, and $S_{n}(\Omega)\subset H^{1}_{D}(\Omega)$. The following theorem allows us to derive the global approximation error from the local approximation errors.
\begin{theorem}[\cite{ma2021novel}]\label{thm:1-0}
Assume that there exists $\xi_{i}\in S_{n_{i}}(\omega_{i})$, $i=1,\cdots,M$, such that
\begin{equation}\label{eq:1-3-1}
\big\Vert \chi_{i}(u^{e}-u^{p}_{i}-\xi_{i})\big\Vert_{a,\,\omega_{i}}\leq \varepsilon_{i}\Vert u^{e}\Vert_{a,\,\omega^{\ast}_{i}},
\end{equation}
where $\omega_{i}\subset\omega_{i}^{\ast}\subset \Omega$. Let $\Psi = u^{p} + \sum_{i=1}^{M}\chi_{i}\xi_{i}$. Then
\begin{equation}\label{eq:1-3-2}
\begin{array}{lll}
{\displaystyle \big\Vert u^{e} - \Psi \big \Vert_{a} \leq \sqrt{\kappa\kappa^{\ast}}\big(\max_{i=1,\cdots,M}\varepsilon_{i}\big)\Vert u^{e}\Vert_{a}.}
\end{array}
\end{equation}
Here we assume that each point ${\bm x}\in\Omega$ belongs to at most $\kappa^{\ast}$ subdomains $\omega_{i}^{\ast}$. 
\end{theorem}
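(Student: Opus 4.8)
The plan is to use the partition-of-unity identity $\sum_{i=1}^{M}\chi_{i}\equiv 1$ to express the global error $u^{e}-\Psi$ as a sum of the local quantities controlled by \cref{eq:1-3-1}, and then to convert the resulting sum of local energy norms into a single global energy norm by invoking the two finite-overlap bounds $\kappa$ and $\kappa^{\ast}$.

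First I would set $e_{i}:=u^{e}-u^{p}_{i}-\xi_{i}$ and note that, since $u^{e}=\sum_{i=1}^{M}\chi_{i}u^{e}$ and $\Psi=\sum_{i=1}^{M}\chi_{i}(u^{p}_{i}+\xi_{i})$, one has the pointwise identity $u^{e}-\Psi=\sum_{i=1}^{M}\chi_{i}e_{i}$. Because $\chi_{i}$ vanishes outside $\omega_{i}$, each term $\chi_{i}e_{i}$, extended by zero, belongs to $H^{1}(\Omega)$ (as already noted for the trial space after \cref{eq:1-3-0}) and has gradient supported in $\overline{\omega_{i}}$; this is what localizes every integral below to the subdomains.

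The key step is the first overlap estimate. Since at most $\kappa$ of the $\chi_{i}$ are nonzero at any given point $\bm{x}$, the gradient $\nabla(u^{e}-\Psi)(\bm{x})$ is a sum of at most $\kappa$ vectors $\nabla(\chi_{i}e_{i})(\bm{x})$. Using that $A(\bm{x})$ is uniformly positive definite (\cref{ass:1}$(i)$), so that $A(\bm{x})\bm{\xi}\cdot\bm{\xi}$ is a genuine quadratic form, the elementary bound $A(\bm{x})\bigl(\sum_{i}\bm{a}_{i}\bigr)\cdot\bigl(\sum_{i}\bm{a}_{i}\bigr)\le m\sum_{i}A(\bm{x})\bm{a}_{i}\cdot\bm{a}_{i}$ for a sum of $m$ vectors gives, pointwise with $m\le\kappa$ and then after integration over $\Omega$,
\[\big\Vert u^{e}-\Psi\big\Vert_{a}^{2}\le \kappa\sum_{i=1}^{M}\big\Vert \chi_{i}e_{i}\big\Vert_{a,\,\omega_{i}}^{2}.\]
I would then insert \cref{eq:1-3-1}, bound each $\varepsilon_{i}$ by $\max_{i}\varepsilon_{i}$, and apply the second overlap property: since each point lies in at most $\kappa^{\ast}$ of the sets $\omega_{i}^{\ast}$, we have $\sum_{i=1}^{M}\Vert u^{e}\Vert_{a,\,\omega_{i}^{\ast}}^{2}\le \kappa^{\ast}\Vert u^{e}\Vert_{a}^{2}$. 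Chaining these inequalities yields $\Vert u^{e}-\Psi\Vert_{a}^{2}\le \kappa\kappa^{\ast}(\max_{i}\varepsilon_{i})^{2}\Vert u^{e}\Vert_{a}^{2}$, and taking square roots gives \cref{eq:1-3-2}.

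This argument is largely routine once the partition-of-unity identity is exploited; the only point needing care is the pointwise Cauchy-Schwarz step, where the uniform positive-definiteness of $A$ is essential and where the factor $\kappa$ first enters. The final constant $\sqrt{\kappa\kappa^{\ast}}$ is precisely the product of the two overlap contributions, one from gluing the local spaces through the partition of unity and one from the oversampling domains $\omega_{i}^{\ast}$.
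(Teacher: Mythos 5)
Your proposal is correct and follows essentially the same route the authors use: the partition-of-unity identity $u^{e}-\Psi=\sum_{i}\chi_{i}(u^{e}-u^{p}_{i}-\xi_{i})$, the finite-overlap bound $\Vert\sum_{i}\chi_{i}e_{i}\Vert_{a}^{2}\le\kappa\sum_{i}\Vert\chi_{i}e_{i}\Vert_{a,\omega_{i}}^{2}$, insertion of \cref{eq:1-3-1}, and the bound $\sum_{i}\Vert u^{e}\Vert_{a,\omega_{i}^{\ast}}^{2}\le\kappa^{\ast}\Vert u^{e}\Vert_{a}^{2}$ (this is exactly the chain of estimates the paper spells out in \cref{eq:3-30} for the discrete analogue, \cref{thm:3-3}). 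Your explicit justification of the factor $\kappa$ via the pointwise quadratic-form inequality is a correct filling-in of a step the paper leaves implicit.
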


The final step of the GFEM is to solve the problem \cref{eq:1-2} on the finite-dimensional approximation space $S_{n}(\Omega)$. We seek the approximate solution $u^{G}= u^{p} + u^{s}$ with $u^{s}\in S_{n}(\Omega)$ such that
\begin{equation}\label{eq:1-2-3}
a(u^{s}, v) = F(v) - a(u^{p}, v),\quad \forall v\in S_{n}(\Omega).
\end{equation}
It is a classical result that 
\begin{equation}\label{eq:1-2-4}
u^{G} = {\rm argmin}\{\Vert u^{e} - v\Vert_{a}\,:\, v\in u^{p} + S_{n}(\Omega)\},
\end{equation}
where $u^{e}$ is the solution of \cref{eq:1-2}. 
It follows from \cref{thm:1-0} and \cref{eq:1-2-4} that the error of the GFEM for solving \cref{eq:1-2} is bounded by 
\begin{equation}\label{eq:1-3-4}
\displaystyle \big\Vert u^{e} - u^{G} \big\Vert_{a} \leq  \big\Vert u^{e} - \Psi\big\Vert_{a}\leq \sqrt{\kappa \kappa^{\ast}}\big(\max_{i=1,\cdots,M}\varepsilon_{i}\big)\Vert u^{e}\Vert_{a}.
\end{equation}
\Cref{eq:1-3-4} indicates that the error of the GFEM is determined by the local approximation errors. A key feature of the MS-GFEM is the construction of the local approximation spaces by local eigenvalue problems such that the local approximation errors $\varepsilon_{i}$ decay nearly exponentially with respect to $n_{i}$. This is described in what follows.

\subsection{Local particular functions and optimal local approximation spaces}
In this subsection, we review the local particular functions and the optimal local approximation spaces proposed in \cite{ma2021novel} for the MS-GFEM, for which the local errors decay nearly exponentially.

\begin{figure}\label{fig:2-1}
\centering
\includegraphics[scale=0.5]{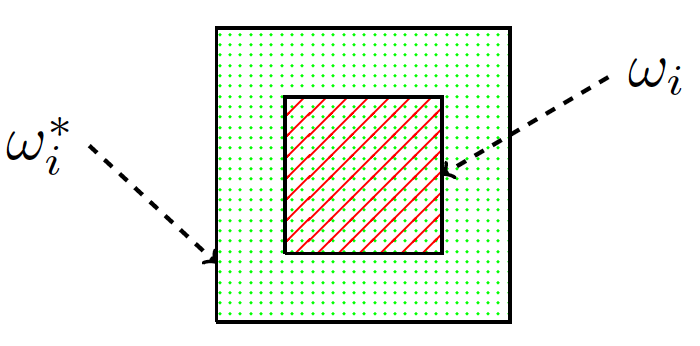}~
\includegraphics[scale=0.5]{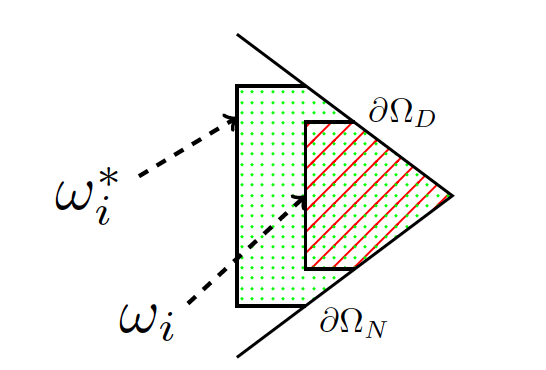}
\caption{Illustration of a subdomain $\omega_i$ that lies within the interior of $\Omega$ (left) and one that intersects the boundary of $\Omega$ (right) with associated oversampling domains $\omega_i^{\ast}$.}
\end{figure}

For a subdomain $\omega_{i}$, we introduce another domain $\omega_{i}^{\ast}$ that satisfies $\omega_{i} \subset \omega_{i}^{\ast}\subset\Omega$ as illustrated in \cref{fig:2-1}. $\omega_{i}^{\ast}$ is usually referred to as the \textit{oversampling domain}. On each $\omega_{i}^{\ast}$, we define the following subspaces of $H^{1}(\omega_{i}^{\ast})$:
\begin{equation}\label{eq:2-2}
\begin{array}{lll}
{\displaystyle H^{1}_{D}(\omega^{\ast}_{i}) = \big\{v\in H^{1}(\omega^{\ast}_{i})\;:\;v = 0\;\;{\rm on}\;\,  \partial\omega^{\ast}_{i} \cap \partial \Omega_{D}\big\}, }\\[2mm]
{\displaystyle H^{1}_{DI}(\omega^{\ast}_{i}) = \big\{v\in H^{1}(\omega^{\ast}_{i})\;:\;v = 0\;\;{\rm on}\;\, (\partial\omega^{\ast}_{i} \cap \partial \Omega_{D})\cup (\partial \omega^{\ast}_{i}\cap \Omega)\big\},}
\end{array}
\end{equation}
and the $A$-harmonic space
\begin{equation}\label{eq:2-1}
H_{A,D}(\omega^{\ast}_{i}) = \big\{ u\in H^{1}_{D}(\omega^{\ast}_{i})\;:\;  a_{\omega^{\ast}_{i}}(u, v) = 0 \quad \forall \,v\in H^{1}_{DI}(\omega^{\ast}_{i})\big\}.
\end{equation}

To take care of the right hand side and the boundary conditions, we introduce a function $\psi_{i} = \psi_{i}^{r} + \psi_{i}^{d}$, where $\psi_{i}^{r}$ and $\psi_{i}^{d}$ are the weak solutions of 
\begin{equation}\label{eq:2-3}
\left\{
\begin{array}{lll}
{\displaystyle -{\rm div} (A({\bm x})\nabla \psi^{r}_{i}({\bm x})) = f({\bm x}), \quad {\rm in}\;\, \omega^{\ast}_{i}  }\\[2mm]
{\displaystyle {\bm n}\cdot A({\bm x})\nabla\psi^{r}_{i}({\bm x}) = g({\bm x}),\qquad \;\; {\rm on}\;\, \partial \omega^{\ast}_{i} \cap \partial \Omega_{N}}\\[2mm]
{\displaystyle \psi^{r}_{i}({\bm x}) = 0,\quad \quad \quad \qquad \qquad \quad \;\;{\rm on}\;\, \partial \omega^{\ast}_{i}\cap \Omega}\\[2mm]
{\displaystyle \psi^{r}_{i}({\bm x}) = 0,\;\quad \quad \qquad \qquad \qquad \;{\rm on}\;\, \partial \omega^{\ast}_{i} \cap \partial \Omega_{D}}
\end{array}
\right.
\end{equation}
and 
\begin{equation}\label{eq:2-4}
\left\{
\begin{array}{lll}
{\displaystyle -{\rm div} (A({\bm x})\nabla \psi^{d}_{i}({\bm x})) = 0, \quad \,{\rm in} \;\,\omega^{\ast}_{i}  }\\[2mm]
{\displaystyle {\bm n}\cdot A({\bm x})\nabla\psi^{d}_{i}({\bm x}) = 0,\qquad \;\;\, {\rm on}\;\, \partial \omega^{\ast}_{i} \cap \partial \Omega_{N}}\\[2mm]
{\displaystyle {\bm n}\cdot A({\bm x})\nabla\psi^{d}_{i}({\bm x}) = 0,\quad \quad \;\; \;{\rm on} \;\,\partial \omega^{\ast}_{i}\cap \Omega}\\[2mm]
{\displaystyle \psi^{d}_{i}({\bm x}) = q({\bm x}),\;\;\;\quad \qquad \qquad {\rm on} \;\,\partial \omega^{\ast}_{i} \cap \partial \Omega_{D},}
\end{array}
\right.
\end{equation}
respectively. Note that if $\partial \omega^{\ast}_{i} \cap \partial \Omega_{D} = \emptyset$, then $\psi^{d}_{i}$ vanishes. It was proved in \cite{ma2021novel} that
\begin{equation}\label{eq:2-4-0}
u^{e}|_{\omega_{i}^{\ast}}-\psi_{i}\in H_{A,D}(\omega^{\ast}_{i}),\quad a_{\omega_{i}^{\ast}}(u^{e}|_{\omega_{i}^{\ast}}-\psi_{i}, \,\psi_{i}) = 0,
\end{equation}
where $u^{e}$ is the solution of \cref{eq:1-2}. We see that the solution is locally decomposed into two orthogonal parts with respect to the energy inner product $a_{\omega_{i}^{\ast}}(\cdot,\cdot)$. The first part is the sum of the solutions of two local boundary value problems (one vanishes if $\partial \omega^{\ast}_{i} \cap \partial \Omega_{D} = \emptyset$). The other part lies in the $A$-harmonic space. In \cite{ma2021novel}, an optimal local approximation space was constructed from singular vectors of a compact operator involving the partition of unity function $\chi_i$ that allows to approximate the $A$-harmonic part with a nearly exponential decay rate.

For each $k\in\mathbb{N}$, let $\lambda_{k}$ and $\phi_{k}$ denote the $k$-th eigenvalue and the associated eigenfunction (arranged in increasing order) of the problem
\begin{equation}\label{eq:2-5}
a_{\omega_{i}^{\ast}}(\phi, v) = \lambda\,a_{\omega_{i}}(\chi_{i} \phi, \chi_{i}v),\quad \forall v\in H_{A,D}(\omega_{i}^{\ast}),
\end{equation}
where $\chi_{i}$ is the partition of unity function supported on $\omega_{i}$. 
Next we define the local particular functions and the local approximation spaces for the MS-GFEM.

\begin{theorem}[\cite{ma2021novel}]\label{thm:2-1}
For $i=1,\cdots,M$, let the local particular function and the local approximation space on $\omega_{i}$ be defined as
\begin{equation}
u^{p}_{i}:=\psi_{i}|_{\omega_{i}},\quad S_{n_{i}}(\omega_{i}) := {\rm span}\{\phi_{1}|_{\omega_{i}},\cdots, \phi_{n_{i}}|_{\omega_{i}}\},
\end{equation}
where $\psi_{i} = \psi_{i}^{r} + \psi_{i}^{d}$ with $\psi_{i}^{r}$ and $\psi_{i}^{d}$ defined in \cref{eq:2-3,eq:2-4}, and let $u^{e}$ be the solution of \cref{eq:1-2}. Then, there exists a $\xi_{i}\in S_{n_{i}}(\omega_{i})$ such that
\begin{equation}
\Vert \chi_{i}(u^{e} - u^{p}_{i} - \xi_{i})\Vert_{a,\omega_{i}}\leq \lambda^{-1/2}_{n_{i}+1}\,\Vert u^{e}\Vert_{a,\omega_{i}^{\ast}}.
\end{equation}
\end{theorem}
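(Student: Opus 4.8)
The plan is to read the eigenvalue problem \eqref{eq:2-5} as a double-orthogonal (SVD-type) decomposition of the $A$-harmonic space and to reduce the desired estimate to truncating the eigenexpansion of the $A$-harmonic part of $u^{e}$. First I would set $w := u^{e}|_{\omega_{i}^{\ast}} - \psi_{i}$. By \eqref{eq:2-4-0} we have $w \in H_{A,D}(\omega_{i}^{\ast})$ and $a_{\omega_{i}^{\ast}}(w,\psi_{i}) = 0$; since $u^{e}|_{\omega_{i}^{\ast}} = \psi_{i} + w$ is therefore an $a_{\omega_{i}^{\ast}}$-orthogonal splitting, the Pythagorean identity yields $\Vert w\Vert_{a,\omega_{i}^{\ast}} \leq \Vert u^{e}\Vert_{a,\omega_{i}^{\ast}}$. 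Moreover, on $\omega_{i}$ we have $u^{e} - u^{p}_{i} = w|_{\omega_{i}}$, so the quantity to be bounded is exactly $\Vert \chi_{i}(w - \xi_{i})\Vert_{a,\omega_{i}}$.

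Next I would treat $a_{\omega_{i}^{\ast}}(\cdot,\cdot)$ as the inner product on the Hilbert space $H_{A,D}(\omega_{i}^{\ast})$ and introduce the bounded symmetric positive bilinear form $b(u,v) := a_{\omega_{i}}(\chi_{i}u,\chi_{i}v)$, so that \eqref{eq:2-5} reads $a_{\omega_{i}^{\ast}}(\phi,v) = \lambda\, b(\phi,v)$. The pairs $(\lambda_{k},\phi_{k})$ are then the eigenpairs of the compact self-adjoint operator $T$ characterized by $a_{\omega_{i}^{\ast}}(Tu,v) = b(u,v)$, with $T\phi_{k} = \lambda_{k}^{-1}\phi_{k}$. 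I would normalize the eigenfunctions by $a_{\omega_{i}^{\ast}}(\phi_{k},\phi_{l}) = \delta_{kl}$, which forces $b(\phi_{k},\phi_{l}) = \lambda_{k}^{-1}\delta_{kl}$; the $\phi_{k}$ are thus simultaneously $a_{\omega_{i}^{\ast}}$-orthonormal and $b$-orthogonal, the key structural fact.

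I would then expand $w = \sum_{k\geq 1} c_{k}\phi_{k}$ with $c_{k} = a_{\omega_{i}^{\ast}}(w,\phi_{k})$ and $\Vert w\Vert_{a,\omega_{i}^{\ast}}^{2} = \sum_{k} c_{k}^{2}$, and choose the truncation $\xi_{i} := \sum_{k=1}^{n_{i}} c_{k}\,\phi_{k}|_{\omega_{i}} \in S_{n_{i}}(\omega_{i})$. Since $\chi_{i}(w-\xi_{i}) = \chi_{i}\sum_{k>n_{i}} c_{k}\phi_{k}$, the $b$-orthogonality of the eigenfunctions together with the monotonicity $\lambda_{k}\geq\lambda_{n_{i}+1}$ for $k>n_{i}$ gives
\[
\Vert \chi_{i}(w-\xi_{i})\Vert_{a,\omega_{i}}^{2} = \sum_{k>n_{i}} c_{k}^{2}\lambda_{k}^{-1} \leq \lambda_{n_{i}+1}^{-1}\sum_{k>n_{i}} c_{k}^{2} \leq \lambda_{n_{i}+1}^{-1}\Vert w\Vert_{a,\omega_{i}^{\ast}}^{2}.
\]
Taking square roots and inserting $\Vert w\Vert_{a,\omega_{i}^{\ast}}\leq\Vert u^{e}\Vert_{a,\omega_{i}^{\ast}}$ from the first step yields the claimed bound $\Vert \chi_{i}(u^{e}-u^{p}_{i}-\xi_{i})\Vert_{a,\omega_{i}} \leq \lambda_{n_{i}+1}^{-1/2}\Vert u^{e}\Vert_{a,\omega_{i}^{\ast}}$.

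The step I expect to demand the most care is the one I used implicitly in the expansion: justifying that $\{\phi_{k}\}$ is a \emph{complete} orthogonal system in $H_{A,D}(\omega_{i}^{\ast})$, so that the series for $w$ converges and the tail sums are legitimate. This rests on the compactness of $T$, equivalently the compactness of the embedding of $H_{A,D}(\omega_{i}^{\ast})$ (with the $a_{\omega_{i}^{\ast}}$-norm) into the $\chi_{i}$-weighted norm on $\omega_{i}$ induced by $b$ — precisely what the oversampling $\omega_{i}\subset\omega_{i}^{\ast}$ provides, via interior Caccioppoli/Rellich-type estimates for $A$-harmonic functions. I would also verify at the outset that $a_{\omega_{i}^{\ast}}(\cdot,\cdot)$ is a genuine inner product on $H_{A,D}(\omega_{i}^{\ast})$ (trivial kernel), taking care of a possible constant mode in the pure-Neumann configuration where $\partial\omega_{i}^{\ast}\cap\partial\Omega_{D}=\emptyset$.
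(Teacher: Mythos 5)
Your argument is correct and is essentially the paper's own route (the paper proves the discrete analogue in \cref{thm:3-2} and cites \cite{ma2021novel} for this continuous statement): the same two ingredients appear — the $a_{\omega_i^{\ast}}$-orthogonal splitting $u^e|_{\omega_i^{\ast}}=\psi_i+w$ with $w\in H_{A,D}(\omega_i^{\ast})$ giving $\Vert w\Vert_{a,\omega_i^{\ast}}\leq\Vert u^e\Vert_{a,\omega_i^{\ast}}$, and the optimality of the spectral truncation giving the factor $\lambda_{n_i+1}^{-1/2}$. The only difference is presentational: the paper invokes the Kolmogorov $n$-width characterization via the singular system of the compact operator $P_i$ (\cref{thm:3-1}) as a black box, whereas you inline its proof through the simultaneous $a$-orthonormality and $b$-orthogonality of the eigenfunctions; you also correctly flag the two genuine technical points (compactness of $P_i$ via the Caccioppoli inequality, and the zero eigenvalue/constant mode when $\partial\omega_i^{\ast}\cap\partial\Omega_D=\emptyset$, which is exactly how the paper handles the Neumann case in \cref{thm:3-2}).
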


\Cref{thm:2-1} shows that the local approximation errors in the MS-GFEM are controlled by the eigenvalues of \cref{eq:2-5}. By assuming that $\omega_{i}$ and $\omega_{i}^{\ast}$ are concentric (truncated) cubes with side lengths $H_{i}$ and $H_{i}^{\ast}$,  respectively, we have the following nearly exponential decay rate for the local approximation errors.
\begin{theorem}[\cite{ma2021novel}]\label{thm:2-2}
For $\epsilon\in (0,\frac{1}{d+1})$, there exists an $N_{\epsilon}>0$, such that for any $n>N_{\epsilon}$,
\begin{equation}
\frac{1}{\lambda^{1/2}_{n+1}} \leq C_{1}e^{2}e^{-n^{(\frac{1}{d+1}-\epsilon)}} e^{-n^{(\frac{1}{d+1}-\epsilon)}R(\rho_{i})}, \quad {\rm if}\; \partial \omega_{i}^{\ast}\cap \partial \Omega_{D} \neq \emptyset,
\end{equation}
and 
\begin{equation}
\frac{1}{\lambda^{1/2}_{n+2}} \leq C_{1}e^{2}e^{-n^{(\frac{1}{d+1}-\epsilon)}} e^{-n^{(\frac{1}{d+1}-\epsilon)}R(\rho_{i})}, \quad {\rm if}\; \partial \omega_{i}^{\ast}\cap \partial \Omega_{D} = \emptyset,
\end{equation}
where $C_{1}$ is the constant given in \cref{eq:1-3}, $R(s) = 1+{s\log(s)}/{(1-s)}$, and $\rho_{i} = H_{i}/H_{i}^{\ast}$.
\end{theorem}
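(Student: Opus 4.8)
The plan is to recast the bound as an estimate for a Kolmogorov $n$-width and to produce that width by iterating an interior-regularity (Caccioppoli) estimate over a family of nested cubes. First I would note that \cref{eq:2-5} is a generalized eigenvalue problem $a_{\omega_i^*}(\phi,v)=\lambda\,a_{\omega_i}(\chi_i\phi,\chi_i v)$ posed on $H_{A,D}(\omega_i^*)$ equipped with the energy inner product $a_{\omega_i^*}(\cdot,\cdot)$. Writing $P\phi:=\chi_i\phi|_{\omega_i}$ and observing that $(\phi,\psi)\mapsto a_{\omega_i}(\chi_i\phi,\chi_i\psi)$ is a bounded symmetric form whose associated operator is compact (compactness comes from the interior smoothing of $A$-harmonic functions, since $\omega_i\subset\subset\omega_i^*$), the spectral theorem yields eigenvalues $\lambda_k\to\infty$ with $\lambda_k^{-1/2}=\sigma_k(P)$ the singular values of $P$. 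The Courant--Fischer min--max principle then identifies $\lambda_{n+1}^{-1/2}$ with the best $n$-dimensional approximation error $\inf_{\dim Q=n}\sup_{\Vert\phi\Vert_{a,\omega_i^*}\le 1}\inf_{w\in Q}\Vert\chi_i\phi-w\Vert_{a,\omega_i}$. In the case $\partial\omega_i^*\cap\partial\Omega_D=\emptyset$ the form $a_{\omega_i^*}(\cdot,\cdot)$ is only a seminorm, degenerate on constants, so the constant mode forces a spurious zero eigenvalue; discarding it accounts for the shift from $\lambda_{n+1}$ to $\lambda_{n+2}$.

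Thus it suffices to exhibit, for each $n$, an $n$-dimensional subspace approximating $\chi_i\phi$ uniformly over unit-energy $A$-harmonic $\phi$. The engine is a one-step lemma built on the Caccioppoli inequality: if $\phi$ is $A$-harmonic on a cube $Q$ and $Q'\subset\subset Q$ is concentric, then $\Vert\phi\Vert_{a,Q'}$ is controlled by $\mathrm{dist}(Q',\partial Q)^{-1}\Vert\phi-c\Vert_{L^2(Q)}$ for the optimal constant $c$. Combining this with a local polynomial (or piecewise-average) approximation on $Q'$ produces a finite-dimensional space of dimension $\sim p^d$ that approximates the restriction of $\phi$ with a contraction factor $\theta<1$ depending on $p$ and the relative gap. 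The weight $\chi_i$ is handled by the product rule: the $\nabla\chi_i$ contribution brings in the factor $C_1/\mathrm{diam}(\omega_i)$ from \cref{eq:1-3}, which is why $C_1$ surfaces in the final constant.

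I would then iterate this one-step estimate across $m$ concentric cubes interpolating between $\omega_i$ and $\omega_i^*$ with side lengths $H_i=H_0<H_1<\cdots<H_m=H_i^*$: the total approximation error telescopes into a product of the per-layer contraction factors, while the total dimension accumulates the per-layer dimensions. The crux of the argument---and the step I expect to be the main obstacle---is the simultaneous optimization of the layer count $m$ against the per-layer degree $p$ for fixed total dimension $n$. Balancing the geometric gain $\theta^m$ against the dimension growth $\sim m\,p^d$ is precisely what degrades the naive single-domain exponent $n^{1/d}$ to $n^{1/(d+1)-\epsilon}$, and carefully tracking how the layer gaps scale with the oversampling ratio $\rho_i=H_i/H_i^*$ through the telescoping product is what yields the sharp factor $R(\rho_i)=1+\rho_i\log(\rho_i)/(1-\rho_i)$ rather than a cruder bound; the residual constants from $(1+1/m)^m\le e$-type estimates collapse into the prefactor $C_1 e^2$.
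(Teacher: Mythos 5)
Your proposal is correct and follows essentially the same route as the paper's (the result is cited from \cite{ma2021novel}, but its proof strategy is reproduced in Section~4.2 for the discrete analogue, \cref{thm:4-2-1}): identification of $\lambda_{n+1}^{-1/2}$ with the Kolmogorov $n$-width of the operator $\phi\mapsto\chi_i\phi|_{\omega_i}$, with the constant mode accounting for the index shift in the pure-Neumann case; a one-layer estimate combining an $m^{-1/d}$-rate $L^2$-approximation of $H^1$-bounded $A$-harmonic functions with the Caccioppoli inequality; iteration over $N$ nested concentric cubes; and the balance of $N$ against the per-layer dimension that yields the exponent $n^{1/(d+1)-\epsilon}$ and the factor $R(\rho_i)$ from the telescoping product. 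The only caveat is that a single polynomial of degree $p$ per layer would not suffice for merely $L^\infty$ coefficients; your parenthetical piecewise-average (or the paper's spectral) construction is the one that delivers the required $m^{-1/d}$ rate.
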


However, in this form it is not sufficient in practice, since the local boundary value problems \cref{eq:2-3,eq:2-4} and the eigenvalue problem \cref{eq:2-5} need to be solved numerically, e.g., using finite element methods. We address this in the next section.

\section{Discrete MS-GFEM}\label{sec-3}
In this section, we present the discrete MS-GFEM based on finite element approximations of the local boundary value problems \cref{eq:2-3,eq:2-4} and of the eigenvalue problem \cref{eq:2-5}.

Let ${\tau}_{h}=\{K\}$ be a shape-regular triangulation of $\Omega$ into triangles (tetrahedrons) or rectangles if $\Omega$ is a rectangular domain, where $h=\max_{K\in {\tau}_{h}}h_{K}$ with $h_{K} = diam (K)$. The mesh-size $h$ is assumed to be small enough to resolve all fine-scale details of the coefficient $A({\bm x})$. Let $V_{h}\subset H^{1}(\Omega)$ be a Lagrange finite element space of lowest order with a basis $\{\varphi_{k}\}_{k=1}^{\mathcal{N}}$, where $\mathcal{N}$ is the dimension of $V_{h}$. The generalization to higher order finite element methods is straightforward. For simplicity, we assume that the oversampling domains $\{ \omega_{i}^{\ast}\}_{i=1}^{M}$ are resolved by the mesh. For each $i=1,\cdots,M$, we define 
\begin{equation}\label{eq:3-1}
\begin{array}{lll}
{\displaystyle  {V}_{h}(\omega^{\ast}_{i}) = \big\{v|_{\omega^{\ast}_{i}}\;:\; v\in V_{h}\big\}, }\\[2mm]
{\displaystyle {V}_{h,D}(\omega^{\ast}_{i}) = \big\{v\in V_{h}(\omega_{i}^{\ast}):\; v = 0 \;\;{\rm on}\;\, \partial \omega^{\ast}_{i} \cap \partial \Omega_{D}\big\},  }\\[2mm]
{\displaystyle V_{h,DI}(\omega^{\ast}_i)= \big\{v\in {V}_{h}(\omega^{\ast}_{i}):\;v = 0 \;\;{\rm on}\;\, (\partial \omega^{\ast}_{i} \cap \partial \Omega_{D})\cup(\partial \omega^{\ast}_{i} \cap \Omega)\big\}, }\\[2mm]
{\displaystyle W_{h}(\omega^{\ast}_i)= \big\{u\in V_{h,D}(\omega^{\ast}_i)\;:\; a_{\omega^{\ast}_{i}}(u,v) = 0,\;\, \forall v\in  V_{h,DI}(\omega^{\ast}_i)\big\}. }
\end{array}
\end{equation}
$V_{h,D}(\omega^{\ast}_{i})$, $V_{h,DI}(\omega^{\ast}_{i})$, and $W_{h}(\omega^{\ast}_i)$ are the finite element discretizations of $H^{1}_{D}(\omega_{i}^{\ast})$, $H^{1}_{DI}(\omega_{i}^{\ast})$, and $H_{A,D}(\omega_{i}^{\ast})$, respectively. Note that $W_{h}(\omega^{\ast}_i) \nsubseteq H_{A,D}(\omega_{i}^{\ast})$.

On each subdomain $\omega_{i}$, the (discrete) local particular function is defined as $u^{p}_{h,i} = \psi_{h,i}|_{\omega_{i}}=(\psi_{h,i}^{r} + \psi_{h,i}^{d})|_{\omega_{i}}$, where $\psi_{h,i}^{r} \in V_{h,DI}(\omega_{i}^{\ast})$ satisfies 
\begin{equation}\label{eq:3-2}
a_{\omega_{i}^{\ast}}(\psi^{r}_{h,i}, v) = F_{\omega_{i}^{\ast}}(v),\quad \forall v\in V_{h,DI}(\omega^{\ast}_i)
\end{equation}
and $\psi_{h,i}^{d} \in V_{h}(\omega_{i}^{\ast})$ satisfies $\psi_{h,i}^{d}=q({\bm x})$ on $\partial \omega_{i}^{\ast}\cap \partial \Omega_D$ and 
\begin{equation}\label{eq:3-3}
a_{\omega_{i}^{\ast}}(\psi^{d}_{h,i}, v) =0,\quad \forall v\in V_{h,D}(\omega^{\ast}_i).
\end{equation}
The (discrete) local approximation space $S_{h,n_i}(\omega_{i})$ is defined as 
\begin{equation}\label{eq:3-4}
S_{h,n_i}(\omega_{i}) = {\rm span}\big\{\phi_{h,1}|_{\omega_{i}},\cdots,\phi_{h,n_{i}}|_{\omega_{i}}\big\},
\end{equation}
where $\{\phi_{h,j}\}_{j=1}^{n_i} \subseteq {W}_{h}(\omega^{\ast}_i)$ are the eigenfunctions corresponding to the $n_{i}$ smallest eigenvalues of the problem:
\begin{equation}\label{eq:3-5}
a_{\omega^{\ast}_{i}}(\phi_{h},v) = \lambda_{h} \,a_{\omega_{i}}(\chi_{i}\phi_{h}, \,\chi_{i}v),\quad \forall \,v\in {W}_{h}(\omega^{\ast}_i).
\end{equation}
The global particular function and the trial space for the discrete MS-GFEM are then defined by
\begin{equation}\label{eq:3-6}
\begin{array}{lll}
{\displaystyle u_{h}^{p} :=\sum_{i=1}^{M}\chi_{i}u^{p}_{h,i}\;\;\;{\rm and}\;\; \; S_{h}(\Omega):= \Big\{ \sum_{i=1}^{M} \chi_{i}v_{i}:\; v_{i}\in S_{h,n_i}(\omega_{i}) \Big\}. }
\end{array}
\end{equation}
The final step of the discrete MS-GFEM is to find the Galerkin approximation $u_{h}^{G}= u_{h}^{p} + u_{h}^{s} \in u_{h}^{p} + S_{h}(\Omega)$ of $u^{e}$ that satisfies
\begin{equation}\label{eq:3-7}
a(u_{h}^{s},\,v) = F(v)-a(u^{p}_{h}, v),\quad \forall v\in S_{h}(\Omega).
\end{equation}
As in the continuous case in \cref{eq:1-2-4}, we have
\begin{equation}\label{eq:3-7-0}
u^{G}_{h} = {\rm argmin}\{\Vert u^{e} - v\Vert_{a}\,:\, v\in u_{h}^{p} + S_{h}(\Omega)\}.
\end{equation}

Furthermore, we introduce the standard finite element approximation of \cref{eq:1-2} on the fine mesh as follows. Find $u^{e}_{h}\in V_{h}$ with $u^{e}_{h}= q({\bm x})$ on $\partial \Omega_{D}$ such that 
\begin{equation}\label{eq:3-8}
a(u^{e}_{h}, v) = F(v),\quad \forall v\in V_{h} \;\,{\rm with}\;\,v = 0 \;\;{\rm on}\;\, \partial \Omega_{D}.
\end{equation}
Similar to the continuous case, the fine-scale solution $u^{e}_{h}$ can be locally decomposed into two orthogonal parts with respect to the inner product $a_{\omega_{i}^{\ast}}(\cdot,\cdot)$ as follows.
\begin{equation}\label{eq:3-10}
u^{e}_{h}|_{\omega_{i}^{\ast}}-\psi_{h,i}\in W_{h}(\omega_{i}^{\ast}),\quad a_{\omega_{i}^{\ast}}(u^{e}_{h}|_{\omega_{i}^{\ast}}-\psi_{h,i},\,\psi_{h,i})=0,\quad \forall \,i=1,\cdots,M.
\end{equation}

\Cref{eq:3-10} can be proved in an identical way as \cref{eq:2-4-0}. As shown in \cite{ma2021novel} for the continuous problem, we will prove that for each $i=1,\cdots,M$, the local approximation space $S_{h,n_i}(\omega_{i})$ defined in \cref{eq:3-4} is optimal for approximating discrete $A$-harmonic functions in an appropriate sense. We first assume that $\partial \omega^{\ast}_{i} \cap \partial \Omega_{D} \neq \emptyset$ and introduce an operator $P_{h,i}:W_{h}(\omega_{i}^{\ast})\rightarrow H_{DI}^{1}(\omega_{i})$ such that
\begin{equation}\label{eq:3-15}
P_{h,i}(v)({\bm x}) = \chi_{i}({\bm x}) v({\bm x})\quad\; \forall {\bm x}\in \omega_{i}.
\end{equation}
$P_{h,i}$ is compact since it is a finite-rank operator. For $n=1,\cdots, {\rm dim}\,(W_{h}(\omega_{i}^{\ast}))$, we consider approximating the set $P_{h,i}(W_{h}(\omega_{i}^{\ast}))$ by $n$-dimensional spaces $Q(n)\subset H_{DI}^{1}(\omega_{i})$. The problem of finding the optimal approximation spaces is formulated as follows. For each $n$, we define the Kolmogorov $n$-width $d_{n}(\omega_{i},\omega_{i}^{\ast})$ of the compact operator $P_{h,i}$ as in \cite{pinkus1985n} by
\begin{equation}\label{eq:3-16}
d_{n}(\omega_{i},\omega_{i}^{\ast}) = \inf_{Q(n)\subset H_{DI}^{1}(\omega_{i})}\sup_{u\in W_{h}(\omega_{i}^{\ast})} \inf_{v\in Q(n)}\frac {\Vert P_{h,i}u-v\Vert_{a,\omega_{i}}}{\Vert u \Vert_{a,\omega_{i}^{\ast}}}.
\end{equation}
Then the optimal approximation space $\widehat{Q}(n)$ satisfies
\begin{equation}\label{eq:3-17}
d_{n}(\omega_{i}, \omega_{i}^{\ast}) =\sup_{u\in W_{h}(\omega_{i}^{\ast})} \inf_{v\in \widehat{Q}(n)}\frac {\Vert P_{h,i}u-v\Vert_{a,\omega_{i}}}{\Vert u \Vert_{a,\omega_{i}^{\ast}}}.
\end{equation}
\cm{
\begin{rem}
If $\partial \omega^{\ast}_{i} \cap \partial \Omega_{D} = \emptyset$, $\Vert\cdot\Vert_{a,\,\omega_{i}^{\ast}}$ is only a seminorm on $W_{h}(\omega_{i}^{\ast})$. In this case, we introduce a subspace of $W_{h}(\omega_{i}^{\ast})$
\begin{equation}\label{eq:3-24}
{\displaystyle \interior{W}_{h}(\omega_{i}^{\ast}) = \big\{u \in W_{h}(\omega_{i}^{\ast}) \,:\, \mathcal{M}_{\omega_{i}}(u) = 0 \big\},}
\end{equation}
where 
\begin{equation}\label{eq:3-25}
\mathcal{M}_{\omega_{i}}(u) =\int_{\omega_{i}} A\nabla(\chi_{i} u) \cdot\nabla \chi_{i}\,d{\bm x}.
\end{equation}
It is not difficult to verify that $\Vert\cdot\Vert_{a,\,\omega_{i}^{\ast}}$ is a norm on $\interior{W}_{h}(\omega_{i}^{\ast})$. We modify the definition of the operator $P_{h,i}$ as 
\begin{equation}\label{eq:3-26}
P_{h,i}:\interior{W}_{h}(\omega_{i}^{\ast})\rightarrow H_{DI}^{1}(\omega_{i})\;\; {\rm such \;that}\;\; P_{h,i}v = \chi_{i}v
\end{equation}
and define the $n$-width accordingly.
\end{rem}
}

The characterization of the $n$-width of a compact operator in Hilbert spaces is well studied. Let $P_{h,i}^{\ast}: H_{DI}^{1}(\omega_{i})\rightarrow W_{h}(\omega_{i}^{\ast})$ ($\interior{W}_{h}(\omega_{i}^{\ast})$) be the adjoint of $P_{h,i}$. We see that $P_{h,i}^{\ast}P_{h,i}: W_{h}(\omega_{i}^{\ast})\rightarrow W_{h}(\omega_{i}^{\ast})$ ($\interior{W}_{h}(\omega_{i}^{\ast})\rightarrow \interior{W}_{h}(\omega_{i}^{\ast})$) is a compact, self-adjoint, positive operator. The following theorem characterizes the $n$-width and the associated optimal approximation space via the singular system of the operator $P_{h,i}$.
\begin{theorem}\label{thm:3-1}
Let $\{ \phi_{h,k}\} $ and $\{\mu_{h,k}\}$ denote the eigenfunctions and eigenvalues of the problem
\begin{equation}\label{eq:3-18}
P_{h,i}^{\ast}P_{h,i}\phi_{h} = \mu_{h} \phi_{h}.
\end{equation}
Then, the optimal approximation space is given by $\widehat{Q}(n) = {\rm span}\{\psi_{h,1},\cdots,\psi_{h,n}\}$, where $\psi_{h,i} = P_{h,i}\phi_{h,i}$ and the $n$-width $d_{n}(\omega_{i},\omega_{i}^{\ast}) = \mu_{h,n+1}^{1/2}$.
\end{theorem}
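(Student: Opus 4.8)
The statement is the classical characterization of the Kolmogorov $n$-width of a compact operator between Hilbert spaces in terms of its singular values (see \cite{pinkus1985n}); the plan is to adapt that argument to the present setting, in which the domain space $W_{h}(\omega_{i}^{\ast})$ carries the inner product $a_{\omega_{i}^{\ast}}(\cdot,\cdot)$, the range space $H^{1}_{DI}(\omega_{i})$ carries $a_{\omega_{i}}(\cdot,\cdot)$, and the adjoint $P_{h,i}^{\ast}$ is taken with respect to these two inner products, i.e.\ $a_{\omega_{i}}(P_{h,i}u,w)=a_{\omega_{i}^{\ast}}(u,P_{h,i}^{\ast}w)$. Since $P_{h,i}$ has finite rank, all spaces involved are effectively finite-dimensional, so no convergence issues arise and the spectral decompositions below are finite.

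First I would invoke the spectral theorem for the compact, self-adjoint, positive operator $P_{h,i}^{\ast}P_{h,i}$ (whose properties are recorded in the excerpt) to obtain an $a_{\omega_{i}^{\ast}}$-orthonormal basis $\{\phi_{h,k}\}$ of $W_{h}(\omega_{i}^{\ast})$ consisting of eigenfunctions with eigenvalues $\mu_{h,1}\ge\mu_{h,2}\ge\cdots\ge 0$. Setting $\sigma_{k}=\mu_{h,k}^{1/2}$ and, for $\sigma_{k}>0$, $\widehat\psi_{h,k}=\sigma_{k}^{-1}P_{h,i}\phi_{h,k}$, a short computation using $a_{\omega_{i}}(P_{h,i}\phi_{h,k},P_{h,i}\phi_{h,l})=a_{\omega_{i}^{\ast}}(P_{h,i}^{\ast}P_{h,i}\phi_{h,k},\phi_{h,l})=\mu_{h,k}\delta_{kl}$ shows that $\{\widehat\psi_{h,k}\}$ is $a_{\omega_{i}}$-orthonormal, yielding the singular value decomposition $P_{h,i}u=\sum_{k}\sigma_{k}\,a_{\omega_{i}^{\ast}}(u,\phi_{h,k})\,\widehat\psi_{h,k}$. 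Since $\mathrm{span}\{\psi_{h,1},\dots,\psi_{h,n}\}=\mathrm{span}\{\widehat\psi_{h,1},\dots,\widehat\psi_{h,n}\}$, the two descriptions of $\widehat Q(n)$ coincide. For the upper bound I would then take $Q(n)=\widehat Q(n)$ and, for any $u$ with $\|u\|_{a,\omega_{i}^{\ast}}=1$, bound the residual of the $a_{\omega_{i}}$-orthogonal projection of $P_{h,i}u$ onto $\widehat Q(n)$, namely $\sum_{k>n}\sigma_{k}\,a_{\omega_{i}^{\ast}}(u,\phi_{h,k})\widehat\psi_{h,k}$; its squared norm is $\sum_{k>n}\sigma_{k}^{2}|a_{\omega_{i}^{\ast}}(u,\phi_{h,k})|^{2}\le \sigma_{n+1}^{2}\sum_{k}|a_{\omega_{i}^{\ast}}(u,\phi_{h,k})|^{2}=\sigma_{n+1}^{2}$, whence $d_{n}(\omega_{i},\omega_{i}^{\ast})\le\sigma_{n+1}$.

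The main obstacle is the matching lower bound, which must hold for \emph{every} competing $n$-dimensional subspace $Q(n)\subset H^{1}_{DI}(\omega_{i})$. Here I would use the standard dimension-counting argument: on the $(n+1)$-dimensional subspace $U=\mathrm{span}\{\phi_{h,1},\dots,\phi_{h,n+1}\}$ one has $\|P_{h,i}u\|_{a,\omega_{i}}^{2}=\sum_{k=1}^{n+1}\sigma_{k}^{2}|a_{\omega_{i}^{\ast}}(u,\phi_{h,k})|^{2}\ge \sigma_{n+1}^{2}\|u\|_{a,\omega_{i}^{\ast}}^{2}$. Given any $Q(n)$, the composition $\Pi_{Q(n)}\circ P_{h,i}$, with $\Pi_{Q(n)}$ the $a_{\omega_{i}}$-orthogonal projection onto $Q(n)$, maps the $(n+1)$-dimensional space $U$ into the $n$-dimensional space $Q(n)$ and therefore has a nontrivial kernel; picking a nonzero $w$ in it gives $P_{h,i}w\perp_{a,\omega_{i}}Q(n)$, so that $\inf_{v\in Q(n)}\|P_{h,i}w-v\|_{a,\omega_{i}}=\|P_{h,i}w\|_{a,\omega_{i}}\ge\sigma_{n+1}\|w\|_{a,\omega_{i}^{\ast}}$. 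Taking the supremum over $u$ in \cref{eq:3-16} then yields $d_{n}(\omega_{i},\omega_{i}^{\ast})\ge\sigma_{n+1}$.

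Combining the two bounds gives $d_{n}(\omega_{i},\omega_{i}^{\ast})=\sigma_{n+1}=\mu_{h,n+1}^{1/2}$ with the optimal space $\widehat Q(n)=\mathrm{span}\{\psi_{h,1},\dots,\psi_{h,n}\}$, as claimed. Finally, I would remark that in the case $\partial\omega_{i}^{\ast}\cap\partial\Omega_{D}=\emptyset$ the entire argument applies verbatim on $\interior{W}_{h}(\omega_{i}^{\ast})$, on which $\|\cdot\|_{a,\omega_{i}^{\ast}}$ is a genuine norm by the preceding Remark, so that $P_{h,i}^{\ast}P_{h,i}$ is again compact, self-adjoint and positive and the spectral decomposition is available.
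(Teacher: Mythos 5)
Your proof is correct and is exactly the classical singular-value characterization of the Kolmogorov $n$-width that the paper states without proof, deferring to \cite{pinkus1985n}: the upper bound via the $a_{\omega_{i}}$-orthogonal projection onto the leading left singular vectors and the lower bound via the dimension-counting kernel argument on ${\rm span}\{\phi_{h,1},\dots,\phi_{h,n+1}\}$ are precisely the standard steps of that argument, correctly adapted to the energy inner products used here. Nothing further is needed.
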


The $\{ \mu_{h,k}^{1/2}\}$, $\{  \phi_{h,k}\} $, and $\{P_{h,i}\phi_{h,k}\}$ are known as the singular values, as well as the right and left singular vectors of the compact operator $P_{h,i}$. Next we associate the eigenvalue problem \cref{eq:3-5} and the local approximation space $S_{h,n_i}(\omega_{i})$ with the $n$-width.
\cm{
\begin{theorem}\label{thm:3-2}
For each $k=1,\cdots, {\rm dim}\,(W_{h}(\omega_{i}^{\ast}))$, let $(\lambda_{h,k},\,\phi_{h,k})$ be the $k$-th eigenpair of the problem \cref{eq:3-5} (arranged in increasing order). Then, the $n$-width and the associated optimal approximation space are given by
\begin{equation}\label{eq:3-19-0}
d_{n}(\omega_{i}, \omega_{i}^{\ast}) = \lambda^{-1/2}_{h,n+2},\;\; \widehat{Q}(n) = {\rm span}\{\chi_{i}\phi_{h,2},\cdots,\chi_{i}\phi_{h,n+1}\}\quad {\rm if}\;\partial \omega^{\ast}_{i} \cap \partial \Omega_{D} = \emptyset,
\end{equation}
or
\begin{equation}\label{eq:3-19}
d_{n}(\omega_{i}, \omega_{i}^{\ast}) = \lambda^{-1/2}_{h,n+1},\;\; \widehat{Q}(n) = {\rm span}\{\chi_{i}\phi_{h,1},\cdots,\chi_{i}\phi_{h,n}\}\quad {\rm if}\;\partial \omega^{\ast}_{i} \cap \partial \Omega_{D} \neq \emptyset.
\end{equation}
In addition, there exists a $\xi_{i}\in S_{h,n_i}(\omega_{i})={\rm span}\{\phi_{h,1},\cdots,\phi_{h,n_i}\}$ such that
\begin{equation}\label{eq:3-20}
\Vert \chi_{i}(u^{e}_{h} - u^{p}_{h,i} - \xi_{i})\Vert_{a,\omega_{i}}\leq \lambda^{-1/2}_{h,n_{i}+1}\,\Vert u^{e}_{h}\Vert_{a,\omega_{i}^{\ast}}.
\end{equation}
\end{theorem}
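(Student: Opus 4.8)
The plan is to reduce \cref{thm:3-2} to \cref{thm:3-1} by identifying the eigenvalue problem \cref{eq:3-5} with the singular-value problem \cref{eq:3-18} for $P_{h,i}^{\ast}P_{h,i}$, and then to derive the local bound \cref{eq:3-20} from the spectral expansion of the discrete $A$-harmonic remainder together with the orthogonality in \cref{eq:3-10}.

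First I would equip $W_{h}(\omega_{i}^{\ast})$ with the inner product $a_{\omega_{i}^{\ast}}(\cdot,\cdot)$ and $H_{DI}^{1}(\omega_{i})$ with $a_{\omega_{i}}(\cdot,\cdot)$. Writing the defining relation $a_{\omega_{i}}(P_{h,i}u,w)=a_{\omega_{i}^{\ast}}(u,P_{h,i}^{\ast}w)$ for the adjoint and composing, one obtains $a_{\omega_{i}^{\ast}}(P_{h,i}^{\ast}P_{h,i}u,v)=a_{\omega_{i}}(\chi_{i}u,\chi_{i}v)$ for all $u,v\in W_{h}(\omega_{i}^{\ast})$. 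Hence $(\lambda_{h},\phi_{h})$ solves \cref{eq:3-5} if and only if $(\lambda_{h}^{-1},\phi_{h})$ is an eigenpair of $P_{h,i}^{\ast}P_{h,i}$, so $\mu_{h,k}=\lambda_{h,k}^{-1}$; since the $\lambda_{h,k}$ increase and the $\mu_{h,k}$ in \cref{thm:3-1} decrease, the indices match. Feeding this into \cref{thm:3-1} gives $d_{n}=\mu_{h,n+1}^{1/2}=\lambda_{h,n+1}^{-1/2}$ and $\widehat{Q}(n)=\mathrm{span}\{\chi_{i}\phi_{h,1},\dots,\chi_{i}\phi_{h,n}\}$, which is \cref{eq:3-19}.

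The case $\partial\omega_{i}^{\ast}\cap\partial\Omega_{D}=\emptyset$ is the main obstacle, since then $\|\cdot\|_{a,\omega_{i}^{\ast}}$ is only a seminorm whose kernel on $W_{h}(\omega_{i}^{\ast})$ is the constants. Here the smallest eigenvalue of \cref{eq:3-5} is $\lambda_{h,1}=0$ with $\phi_{h,1}$ constant, and for $k\geq 2$ the orthogonality $a_{\omega_{i}}(\chi_{i}\phi_{h,k},\chi_{i}\phi_{h,1})=0$ of eigenvectors for distinct eigenvalues reads exactly $\mathcal{M}_{\omega_{i}}(\phi_{h,k})=0$, i.e.\ $\phi_{h,k}\in\interior{W}_{h}(\omega_{i}^{\ast})$. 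Thus $P_{h,i}^{\ast}P_{h,i}$ restricted to $\interior{W}_{h}(\omega_{i}^{\ast})$ has eigenpairs $(\lambda_{h,k}^{-1},\phi_{h,k})_{k\geq 2}$, and \cref{thm:3-1} applied there, with the index shifted by one, yields $d_{n}=\lambda_{h,n+2}^{-1/2}$ and $\widehat{Q}(n)=\mathrm{span}\{\chi_{i}\phi_{h,2},\dots,\chi_{i}\phi_{h,n+1}\}$, i.e.\ \cref{eq:3-19-0}.

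For the bound \cref{eq:3-20} I would set $w=u_{h}^{e}|_{\omega_{i}^{\ast}}-\psi_{h,i}$, which lies in $W_{h}(\omega_{i}^{\ast})$ by \cref{eq:3-10}, and expand $w=\sum_{k}c_{k}\phi_{h,k}$ in the eigenbasis normalized so that $a_{\omega_{i}^{\ast}}(\phi_{h,k},\phi_{h,l})=\delta_{kl}$, keeping the constant mode separate in the Neumann case. The eigenrelation then gives the double orthogonality $a_{\omega_{i}}(\chi_{i}\phi_{h,k},\chi_{i}\phi_{h,l})=\lambda_{h,k}^{-1}\delta_{kl}$, so the best approximation of $\chi_{i}w$ from $\chi_{i}S_{h,n_{i}}(\omega_{i})$ is the partial sum over $k\leq n_{i}$, with squared error $\sum_{k>n_{i}}\lambda_{h,k}^{-1}c_{k}^{2}\leq\lambda_{h,n_{i}+1}^{-1}\sum_{k>n_{i}}c_{k}^{2}\leq\lambda_{h,n_{i}+1}^{-1}\|w\|_{a,\omega_{i}^{\ast}}^{2}$. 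Taking $\xi_{i}=\sum_{k=1}^{n_{i}}c_{k}\phi_{h,k}\in S_{h,n_{i}}(\omega_{i})$ and bounding $\|w\|_{a,\omega_{i}^{\ast}}\leq\|u_{h}^{e}\|_{a,\omega_{i}^{\ast}}$ through the Pythagorean identity implied by $a_{\omega_{i}^{\ast}}(w,\psi_{h,i})=0$ in \cref{eq:3-10} then yields \cref{eq:3-20}. The delicate point is again the Neumann constant mode $\phi_{h,1}$: it contributes nothing to $\|w\|_{a,\omega_{i}^{\ast}}$ but is reproduced exactly by $c_{1}\chi_{i}\phi_{h,1}\in\chi_{i}S_{h,n_{i}}(\omega_{i})$, and its $a_{\omega_{i}}$-orthogonality to the higher modes is precisely $\mathcal{M}_{\omega_{i}}(\phi_{h,k})=0$.
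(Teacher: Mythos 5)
Your proposal is correct and follows essentially the same route as the paper: identify \cref{eq:3-5} with the singular-value problem for $P_{h,i}^{\ast}P_{h,i}$, restrict to $\interior{W}_{h}(\omega_{i}^{\ast})$ to handle the zero (constant) mode in the Neumann case with the resulting index shift, and combine the $n$-width bound with the orthogonality in \cref{eq:3-10} to get \cref{eq:3-20}. The only cosmetic difference is that you unwind the best-approximation step via an explicit doubly orthogonal eigenexpansion, whereas the paper simply invokes the definition of the $n$-width applied to $u_{h}^{e}-\psi_{h,i}-c$.
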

\begin{proof}
We only prove the result for the case $\partial \omega^{\ast}_{i} \cap \partial \Omega_{D} = \emptyset$, and the other case can be proved similarly. Let $\lambda_{h} = \mu_{h}^{-1}$ and let $\mathbb{R}$ denote the space of constant functions. We see that ${W}_{h}(\omega^{\ast}_i) = \mathbb{R}\oplus \interior{W}_{h}(\omega_{i}^{\ast})$ and $a_{\omega_{i}^{\ast}}(v,\varphi) = a_{\omega_{i}}(\chi_{i}v,\chi_{i}\varphi) = 0$ for all $v\in \mathbb{R}$ and $\varphi \in \interior{W}_{h}(\omega_{i}^{\ast})$. It follows that the eigenproblem \cref{eq:3-5} can be decoupled into two eigenproblems: one on $\mathbb{R}$ with eigenvalue 0 and another on $\interior{W}_{h}(\omega_{i}^{\ast})$ with positive eigenvalues, i.e.,
\begin{equation}\label{eq:3-20-0}
a_{\omega^{\ast}_{i}}(\phi_{h},v) =  \lambda_{h} \,a_{\omega_{i}}(\chi_{i}\phi_{h}, \,\chi_{i}v)\quad \forall v\in \interior{W}_{h}(\omega_{i}^{\ast}),
\end{equation}
which is the variational formulation of \cref{eq:3-18}. Noting that the $k$-th eigenpair of \cref{eq:3-20-0} is the $(k+1)$-th eigenpair of \cref{eq:3-5}, we get \cref{eq:3-19-0} immediately from \cref{thm:3-1}. 

It remains to prove \cref{eq:3-20}. By \cref{eq:3-10} and the definition of $\interior{W}_{h}(\omega_{i}^{\ast})$, we see that there exists a constant $c\in\mathbb{R}$ such that $u^{e}_{h}-\psi_{h,i}-c\in \interior{W}_{h}(\omega_{i}^{\ast})$. It follows from the definition of the $n$-width and \cref{eq:3-19-0} that there exists a $\widehat{\xi}_{i}\in {\rm span}\{\phi_{h,2},\cdots,\phi_{h,n_{i}}\}$ such that
\begin{equation}\label{eq:3-21}
\begin{array}{lll}
{\displaystyle \Vert \chi_{i}(u^{e}_{h} - \psi_{h,i}- c-\widehat{\xi}_{i})\Vert_{a,\omega_{i}}\leq d_{n_i-1}(\omega_{i},\omega_{i}^{\ast})\Vert u^{e}_{h} -\psi_{h,i}-c\Vert_{a,\omega_{i}^{\ast}} }\\[3mm]
{\displaystyle \qquad \qquad \qquad \qquad \qquad \qquad = \lambda^{-1/2}_{h,n_{i}+1}\,\Vert u^{e}_{h} -\psi_{h,i}\Vert_{a,\omega_{i}^{\ast}}.}
\end{array}
\end{equation}
Define $\xi_{i}=c+\widehat{\xi}_{i}\in S_{h,n_i}(\omega_{i})$. It follows that
\begin{equation}\label{eq:3-22}
\Vert \chi_{i}(u^{e}_{h} - u^{p}_{h,i} - \xi_{i})\Vert_{a,\omega_{i}}\leq \lambda^{-1/2}_{h,n_{i}+1}\,\Vert u^{e}_{h} -\psi_{h,i}\Vert_{a,\omega_{i}^{\ast}}.
\end{equation}
Using \cref{eq:3-10} again, we get
\begin{equation}\label{eq:3-23}
\begin{array}{lll}
{\displaystyle a_{\omega_{i}^{\ast}}(u^{e}_{h}-\psi_{h,i}, u^{e}_{h}-\psi_{h,i}) = a_{\omega_{i}^{\ast}}(u^{e}_{h}-\psi_{h,i}, u^{e}_{h}) \leq \Vert u^{e}_{h}-\psi_{h,i}\Vert_{a,\omega_{i}^{\ast}} \Vert u^{e}_{h}\Vert_{a,\omega_{i}^{\ast}},}
\end{array}
\end{equation}
which implies that $\Vert u^{e}_{h}-\psi_{h,i}\Vert_{a,\omega_{i}^{\ast}}\leq  \Vert u^{e}_{h}\Vert_{a,\omega_{i}^{\ast}}$. Combining this inequality and \cref{eq:3-22} gives \cref{eq:3-20}.
\end{proof}
}

The following theorem gives the error bound of the discrete MS-GFEM.
\begin{theorem}\label{thm:3-3}
Let $u^{e}$ and $u^{e}_{h}$ be the solutions of \cref{eq:1-2,eq:3-8}, respectively, and let $u_{h}^{G}$ be the discrete MS-GFEM approximation. Then,
\begin{equation}\label{eq:3-28}
\displaystyle \big\Vert u^{e} - u^{G}_{h} \big\Vert_{a} \leq  \Vert u^{e} - u^{e}_{h} \Vert_{a} +\sqrt{\kappa\kappa^{\ast}}\big(\max_{i=1,\cdots,M}\lambda^{-1/2}_{h,n_{i}+1}\big)\Vert u^{e}_{h}\Vert_{a},
\end{equation}
where $\lambda_{h,n_{i}+1}$ is the $(n_i+1)$-th eigenvalue of the problem \cref{eq:3-5}.
\end{theorem}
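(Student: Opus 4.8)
The plan is to combine the quasi-optimality of the discrete MS-GFEM approximation with the discrete local approximation estimate \cref{eq:3-20} from \cref{thm:3-2}, mediated by the fine-scale finite element solution $u^e_h$. The starting point is the minimization characterization \cref{eq:3-7-0}: since $u^G_h$ is the $\Vert\cdot\Vert_a$-best approximation to $u^e$ in the affine space $u^p_h + S_h(\Omega)$, we have $\Vert u^e - u^G_h\Vert_a \le \Vert u^e - \Psi_h\Vert_a$ for \emph{any} admissible competitor $\Psi_h \in u^p_h + S_h(\Omega)$. The task thus reduces to exhibiting one good candidate $\Psi_h$ and estimating $\Vert u^e - \Psi_h\Vert_a$.

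First I would assemble the candidate from the local building blocks supplied by \cref{thm:3-2}. For each $i$, pick $\xi_i \in S_{h,n_i}(\omega_i)$ realizing \cref{eq:3-20}, and set $\Psi_h = u^p_h + \sum_{i=1}^M \chi_i \xi_i$. The definition \cref{eq:3-6} of $S_h(\Omega)$ guarantees $\sum_{i=1}^M \chi_i \xi_i \in S_h(\Omega)$, so $\Psi_h$ is indeed admissible. Inserting $u^e_h$ and applying the triangle inequality then splits the error into the fine-scale FE error and a local-approximation remainder,
\[
\Vert u^e - u^G_h\Vert_a \le \Vert u^e - \Psi_h\Vert_a \le \Vert u^e - u^e_h\Vert_a + \Vert u^e_h - \Psi_h\Vert_a,
\]
so that it only remains to bound the last term by $\sqrt{\kappa\kappa^{\ast}}\big(\max_i \lambda^{-1/2}_{h,n_i+1}\big)\Vert u^e_h\Vert_a$.

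For that final bound I would invoke the partition-of-unity argument underlying \cref{thm:1-0}. Using $\sum_i \chi_i = 1$ together with $u^p_h = \sum_i \chi_i u^p_{h,i}$, one rewrites the remainder as a single partition-of-unity sum $u^e_h - \Psi_h = \sum_{i=1}^M \chi_i\big(u^e_h - u^p_{h,i} - \xi_i\big)$; combining the local estimates \cref{eq:3-20} with the finite-overlap properties (at most $\kappa$ of the $\omega_i$ and $\kappa^{\ast}$ of the $\omega_i^{\ast}$ meet any given point) yields exactly the claimed factor. The one point demanding care---and the main obstacle---is that \cref{thm:1-0} is stated for the continuous solution $u^e$ with the continuous particular functions and local spaces, whereas here it must be applied to $u^e_h$ together with their discrete counterparts $u^p_{h,i}$ and $S_{h,n_i}(\omega_i)$. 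This transfer is legitimate because the proof of \cref{thm:1-0} is purely algebraic: it uses only the partition-of-unity identities \cref{eq:1-3} and the overlap counts, and never any property specific to $u^e$ or to the continuous spaces. Substituting the two displayed bounds gives \cref{eq:3-28}.
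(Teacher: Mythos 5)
Your proposal is correct and follows essentially the same route as the paper: the quasi-optimality \cref{eq:3-7-0}, the candidate $\Psi_h = u^p_h + \sum_i \chi_i\xi_i$ built from the local $\xi_i$ of \cref{thm:3-2}, the triangle inequality through $u^e_h$, and the partition-of-unity/finite-overlap estimate (which the paper simply carries out directly in \cref{eq:3-30} rather than citing \cref{thm:1-0}, confirming your observation that the argument is purely algebraic and transfers to the discrete data).
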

\begin{proof}
By \cref{thm:3-2}, there exists a $\xi_{i}\in S_{h,n_{i}}(\omega_{i})$, $i=1,\cdots,M$, such that
\begin{equation}\label{eq:3-29}
\big\Vert \chi_{i}(u^{e}_{h}-u^{p}_{h,i}-\xi_{i})\big\Vert_{a,\,\omega_{i}}\leq \lambda^{-1/2}_{h,n_{i}+1}\Vert u^{e}_{h}\Vert_{a,\,\omega^{\ast}_{i}}.
\end{equation}
Let $\Psi_{h} = u^{p}_{h} + \sum_{i=1}^{M}\chi_{i}\xi_{i}\in u^{p}_{h}+S_{h}(\Omega)$. Then
\begin{equation}\label{eq:3-30}
\begin{array}{lll}
{\displaystyle \quad \big\Vert u^{e}_{h} - \Psi_{h} \big\Vert^{2}_{a} =\Big\Vert \sum_{i=1}^{M}\chi_{i}(u^{e}_{h}-u^{p}_{h,i}-\xi_{i})\Big\Vert^{2}_{a} }\\[3mm]
{\displaystyle \leq \kappa \sum_{i=1}^{M}\big\Vert \chi_{i}(u^{e}_{h}-u^{p}_{h,i}-\xi_{i})\big\Vert^{2}_{a,\,\omega_{i}} \leq \kappa\sum_{i=1}^{M}\lambda^{-1}_{h,n_{i}+1}\Vert u^{e}_{h}\Vert^{2}_{a,\,\omega^{\ast}_{i}}  }\\[3mm]
{\displaystyle \leq \kappa \big(\max_{i=1,\cdots,M}\lambda^{-1}_{h,n_{i}+1}\big)\sum_{i=1}^{M}\Vert u^{e}_{h}\Vert^{2}_{a,\,\omega^{\ast}_{i}} \leq \kappa\kappa^{\ast}\big(\max_{i=1,\cdots,M}\lambda^{-1}_{h,n_{i}+1}\big)\Vert u^{e}_{h}\Vert^{2}_{a}, }
\end{array}
\end{equation}
where we have used the assumption that each point ${\bm x}\in\Omega$ belongs to at most $\kappa^{\ast}$ subdomains $\omega_{i}^{\ast}$ as in \cref{thm:1-0}. Combining the triangle inequality and \cref{eq:3-30} gives
\begin{equation}\label{eq:3-31}
 \big\Vert u^{e} - \Psi_{h} \big \Vert_{a} \leq \Vert u^{e} - u^{e}_{h} \Vert_{a} +\sqrt{\kappa\kappa^{\ast}}\big(\max_{i=1,\cdots,M}\lambda^{-1/2}_{h,n_{i}+1}\big)\Vert u^{e}_{h}\Vert_{a}.
\end{equation}
The desired estimate \cref{eq:3-28} follows from \cref{eq:3-31,eq:3-7-0}.
\end{proof}

\Cref{thm:3-3} indicates that the error of the discrete MS-GFEM is bounded by the FE error in $V_{h}$ and the errors arising from the local approximations of the fine-scale solution. By implementing the discrete MS-GFEM on a fine mesh such that the error resulting from the fine-scale discretization is sufficiently small, we only need to concern ourselves with the local approximation errors that are controlled by the eigenvalues of \cref{eq:3-5}. In the next section, we will prove the convergence of the eigenvalues of the discrete problem \cref{eq:3-5} towards the eigenvalues of the continuous problem \cref{eq:2-5} as $h\rightarrow 0$ and also establish a nearly exponential decay rate for the local approximation errors.

\subsection{Mixed FE approximations and Caccioppoli-type inequalities}
In this subsection, we present some preliminary results that constitute the crucial technical tools for proving the convergence of the discrete MS-GFEM in the following section. We assume that $\omega$ and $\omega^{\ast}$ are open subsets of $\Omega$ with $\omega \subset \omega^{\ast}\subset \Omega$ throughout this subsection.

\begin{lemma}\label{lem:3-s2-1}
Assume that $\partial \omega^{\ast} \cap \partial \Omega_{D} \neq \emptyset$ and that $\mathcal{F}$ is a bounded linear functional on $H^{1}_{D}(\omega^{\ast})$. Consider the problem of finding $z\in H^{1}_{D}(\omega^{\ast})$ and $p\in H^{1}_{DI}(\omega^{\ast})$ such that
\begin{equation}\label{eq:3-s2-1}
\begin{aligned}
a_{\omega^{\ast}}(z,\phi) + a_{\omega^{\ast}}(\phi,p) =&\, \mathcal{F}(\phi)\quad \forall \phi\in H^{1}_{D}(\omega^{\ast}),\\
a_{\omega^{\ast}}(z,\xi) =&\;0\qquad\;\; \forall \xi\in H^{1}_{DI}(\omega^{\ast}).
\end{aligned}
\end{equation}
Then there exists a unique solution $(z,p)$ to \cref{eq:3-s2-1} and 
\begin{equation}\label{eq:3-s2-2}
\Vert z\Vert_{a,\omega^{\ast}} + \Vert p\Vert_{a,\omega^{\ast}}\leq C\Vert \mathcal{F}\Vert_{(H^{1}_{D}(\omega^{\ast}))^{\prime}}.
\end{equation}
\end{lemma}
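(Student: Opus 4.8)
The plan is to treat \cref{eq:3-s2-1} as a standard saddle-point (mixed) problem and invoke the Brezzi theory for such systems. Writing $X = H^{1}_{D}(\omega^{\ast})$ for the primal space and $N = H^{1}_{DI}(\omega^{\ast})$ for the multiplier space, the system has the abstract form $a_{\omega^{\ast}}(z,\phi) + a_{\omega^{\ast}}(\phi,p) = \mathcal{F}(\phi)$ on $X$ together with the constraint $a_{\omega^{\ast}}(z,\xi)=0$ on $N$, so the `off-diagonal' bilinear form is $b(\phi,q):=a_{\omega^{\ast}}(\phi,q)$ and the diagonal form is $a_{\omega^{\ast}}$ itself. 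Existence, uniqueness and the bound \cref{eq:3-s2-2} will follow once I verify the three Brezzi hypotheses: continuity of both forms, coercivity of $a_{\omega^{\ast}}$ on the kernel $K=\{\phi\in X: b(\phi,q)=0\ \forall q\in N\}$, and the inf--sup condition for $b$.

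Continuity is immediate, since $a_{\omega^{\ast}}$ is bounded by $\beta$ times the $H^{1}$-norms by \cref{ass:1}. For coercivity, the crucial point is the hypothesis $\partial\omega^{\ast}\cap\partial\Omega_{D}\neq\emptyset$: combined with the lower bound $\alpha|{\bm\xi}|^{2}\le A{\bm\xi}\cdot{\bm\xi}$ and a Poincar\'e--Friedrichs inequality on $H^{1}_{D}(\omega^{\ast})$, it makes the energy seminorm $\Vert\cdot\Vert_{a,\omega^{\ast}}$ a full norm on all of $X$, equivalent to the $H^{1}$-norm. Hence $a_{\omega^{\ast}}(\phi,\phi)=\Vert\phi\Vert^{2}_{a,\omega^{\ast}}$ is coercive not merely on $K$ but on all of $X$; in particular the kernel $K$ is exactly the $A$-harmonic space $H_{A,D}(\omega^{\ast})$ of \cref{eq:2-1}.

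The one hypothesis that is genuinely the crux in a general mixed problem is the inf--sup (LBB) condition, but here it collapses to a triviality. Since every $\xi\in H^{1}_{DI}(\omega^{\ast})$ also vanishes on $\partial\omega^{\ast}\cap\partial\Omega_{D}$, we have the inclusion $N=H^{1}_{DI}(\omega^{\ast})\subset H^{1}_{D}(\omega^{\ast})=X$, so for any $q\in N$ I may simply take the test function $\phi=q\in X$ and obtain $b(q,q)=\Vert q\Vert^{2}_{a,\omega^{\ast}}$. Because $q$ vanishes on a non-trivial part of $\partial\omega^{\ast}$, Poincar\'e again makes $\Vert\cdot\Vert_{a,\omega^{\ast}}$ a norm on $N$ equivalent to the $H^{1}$-norm, so $\sup_{\phi\in X} b(\phi,q)/\Vert\phi\Vert_{a,\omega^{\ast}}\ge \Vert q\Vert_{a,\omega^{\ast}}$, which is the inf--sup bound with a constant independent of $q$. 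With all three conditions in hand, the abstract theory yields a unique $(z,p)$ together with \cref{eq:3-s2-2}.

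Equivalently, and perhaps more transparently, one can bypass the abstract machinery using the $a_{\omega^{\ast}}$-orthogonal splitting $H^{1}_{D}(\omega^{\ast})=H_{A,D}(\omega^{\ast})\oplus H^{1}_{DI}(\omega^{\ast})$ (the same decomposition behind \cref{eq:2-4-0}): testing the first equation against $\xi\in H^{1}_{DI}(\omega^{\ast})$ and using the constraint $a_{\omega^{\ast}}(z,\xi)=0$ reduces it to $a_{\omega^{\ast}}(\xi,p)=\mathcal{F}(\xi)$, which determines $p$ uniquely by Lax--Milgram; testing instead against $\phi\in H_{A,D}(\omega^{\ast})$ and using $a_{\omega^{\ast}}(\phi,p)=0$ reduces it to $a_{\omega^{\ast}}(z,\phi)=\mathcal{F}(\phi)$, which determines $z\in H_{A,D}(\omega^{\ast})$ uniquely by Lax--Milgram. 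Taking $\phi=z$ and $\xi=p$ in these two identities and invoking the norm equivalence yields $\Vert z\Vert_{a,\omega^{\ast}},\,\Vert p\Vert_{a,\omega^{\ast}}\le C\Vert\mathcal{F}\Vert_{(H^{1}_{D}(\omega^{\ast}))^{\prime}}$. I expect no serious obstacle; the only point needing care is that the hypothesis $\partial\omega^{\ast}\cap\partial\Omega_{D}\neq\emptyset$ is precisely what powers the Poincar\'e inequalities, which is why the complementary case must be treated separately.
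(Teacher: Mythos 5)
Your primary argument is exactly the paper's: the paper also verifies coercivity of $a_{\omega^{\ast}}(\cdot,\cdot)$ on $H^{1}_{D}(\omega^{\ast})$ (via the Poincar\'e--Friedrichs inequality enabled by $\partial\omega^{\ast}\cap\partial\Omega_{D}\neq\emptyset$) and the inf--sup condition, and then cites Theorem 4.2.3 of \cite{boffi2013mixed}; your explicit choice $\phi=q$ for the inf--sup bound is precisely what the paper leaves as ``easy to verify.'' Your second, bypassing argument via the $a_{\omega^{\ast}}$-orthogonal splitting $H^{1}_{D}(\omega^{\ast})=H_{A,D}(\omega^{\ast})\oplus H^{1}_{DI}(\omega^{\ast})$ is also correct and more self-contained, since it decouples the system into two Riesz-representation problems and avoids the abstract saddle-point theorem altogether; either route suffices.
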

\begin{proof}
Since $\partial \omega^{\ast} \cap \partial \Omega_{D} \neq \emptyset$, the bilinear form $a_{\omega^{\ast}}(\cdot,\cdot)$ is coercive on $H^{1}_{D}(\omega^{\ast})$ and $\Vert\cdot\Vert_{a,\omega^{\ast}}$ is a norm on $H^{1}_{D}(\omega^{\ast})$ and $H^{1}_{DI}(\omega^{\ast})$. Further, it is easy to verify that the inf--sup condition on $a_{\omega^{\ast}}(\cdot,\cdot)$ is satisfied. By Theorem 4.2.3 of \cite{boffi2013mixed}, the problem \cref{eq:3-s2-1} has a unique solution satisfying \cref{eq:3-s2-2}.
\end{proof}

\begin{lemma}\label{lem:3-s2-2}
Assume that $\partial \omega^{\ast}\cap \partial \Omega_{D} \neq \emptyset$. Let $\mathcal{F}$ be a bounded linear functional on $H^{1}_{D}(\omega^{\ast})$ and let $(z,p)$ satisfy \cref{eq:3-s2-1}. Consider the discrete problem of finding $z_{h}\in V_{h,D}(\omega^{\ast})$ and $p_{h}\in V_{h,DI}(\omega^{\ast})$ such that 
\begin{equation}\label{eq:3-s2-3}
\begin{aligned}
a_{\omega^{\ast}}(z_{h},\phi_{h}) + a_{\omega^{\ast}}(\phi_{h},p_{h}) =&\, \mathcal{F}(\phi_{h})\quad \forall \phi_{h}\in V_{h,D}(\omega^{\ast}),\\
a_{\omega^{\ast}}(z_{h},\xi_{h}) =&\;0\qquad\,\quad \forall \xi_{h}\in V_{h,DI}(\omega^{\ast}).
\end{aligned}
\end{equation}
Then there exists a unique solution $(z_h,p_h)$ to \cref{eq:3-s2-3}. There is also a constant $C$ independent of $h$, $(z,p)$, and $(z_h,p_h)$ such that
\begin{equation}\label{eq:3-s2-4}
\begin{array}{lll}
{\displaystyle \qquad \Vert z-z_h\Vert_{a,\omega^{\ast}} + \Vert p-p_h\Vert_{a,\omega^{\ast}}}\\[3mm]
{\displaystyle \leq C\big(\inf_{v_h\in V_{h,D}(\omega^{\ast})}\Vert z-v_h\Vert_{a,\omega^{\ast}} + \inf_{q_h\in V_{h,DI}(\omega^{\ast})}\Vert p-q_h\Vert_{a,\omega^{\ast}}\big).}
\end{array} 
\end{equation}
\end{lemma}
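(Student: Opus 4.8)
The plan is to recognize \cref{eq:3-s2-3} as a symmetric (Stokes-type) saddle point problem and to apply the standard abstract theory of mixed finite element methods, exactly as was done at the continuous level in \cref{lem:3-s2-1}. Writing the ``diagonal'' form as $a_{\omega^{\ast}}(z_h,\phi_h)$ on $V_{h,D}(\omega^{\ast})\times V_{h,D}(\omega^{\ast})$ and the ``off-diagonal'' form as $b(\phi_h,q_h):=a_{\omega^{\ast}}(\phi_h,q_h)$ on $V_{h,D}(\omega^{\ast})\times V_{h,DI}(\omega^{\ast})$, I first note that both forms are continuous in the energy norm with constant $1$ by the Cauchy--Schwarz inequality for the $A$-weighted inner product. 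By Brezzi's theorem (the abstract theory of \cite{boffi2013mixed}), it then suffices to verify two \emph{discrete} stability conditions with constants independent of $h$: (i) coercivity of $a_{\omega^{\ast}}(\cdot,\cdot)$ on the discrete kernel, and (ii) the discrete inf--sup (LBB) condition for $b$.

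The first condition is immediate. Since $\partial\omega^{\ast}\cap\partial\Omega_D\neq\emptyset$, the form $a_{\omega^{\ast}}(\cdot,\cdot)$ is coercive on all of $V_{h,D}(\omega^{\ast})$---indeed $a_{\omega^{\ast}}(v_h,v_h)=\Vert v_h\Vert_{a,\omega^{\ast}}^{2}$---and hence a fortiori on the discrete kernel $\{v_h\in V_{h,D}(\omega^{\ast}):b(v_h,\xi_h)=0\ \forall\,\xi_h\in V_{h,DI}(\omega^{\ast})\}$, which is exactly $W_h(\omega^{\ast})$ by \cref{eq:3-1}. The coercivity constant is $1$, independent of $h$.

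The crux is the discrete inf--sup condition, which in a general mixed method is the delicate step of choosing compatible spaces. Here it is in fact elementary because of the special structure: $b$ is the restriction of the symmetric form $a_{\omega^{\ast}}$ and the ``pressure'' space is nested in the ``velocity'' space, $V_{h,DI}(\omega^{\ast})\subset V_{h,D}(\omega^{\ast})$. Thus for any $q_h\in V_{h,DI}(\omega^{\ast})$ I may test with $\phi_h=q_h\in V_{h,D}(\omega^{\ast})$ to obtain
\[
\sup_{\phi_h\in V_{h,D}(\omega^{\ast})}\frac{b(\phi_h,q_h)}{\Vert\phi_h\Vert_{a,\omega^{\ast}}}\ \geq\ \frac{a_{\omega^{\ast}}(q_h,q_h)}{\Vert q_h\Vert_{a,\omega^{\ast}}}=\Vert q_h\Vert_{a,\omega^{\ast}},
\]
so the discrete inf--sup constant equals $1$, uniformly in $h$ and in the choice of mesh. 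This uniformity is precisely what forces the constant $C$ in \cref{eq:3-s2-4} to be independent of $h$.

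With continuity, discrete kernel-coercivity, and the discrete inf--sup condition all established with $h$-independent (in fact unit) constants, Brezzi's theorem yields both the existence and uniqueness of $(z_h,p_h)$---the discrete system being square, existence and uniqueness are equivalent---and the quasi-optimal error estimate \cref{eq:3-s2-4}, with $C$ depending only on these stability constants. I do not expect a genuine obstacle here: the only thing to watch is to track that every constant entering Brezzi's estimate is $h$-independent, and given the nested, symmetric structure above this is automatic. The entire difficulty is thus reduced to recognizing the saddle-point framework and exploiting $V_{h,DI}(\omega^{\ast})\subset V_{h,D}(\omega^{\ast})$.
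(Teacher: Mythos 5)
Your proposal is correct and follows essentially the same route as the paper, which simply invokes coercivity of $a_{\omega^{\ast}}(\cdot,\cdot)$ on $V_{h,D}(\omega^{\ast})$ and the discrete inf--sup condition and then cites Theorem 5.2.5 of \cite{boffi2013mixed}. You merely make explicit the verification the paper leaves implicit, in particular the observation that the nesting $V_{h,DI}(\omega^{\ast})\subset V_{h,D}(\omega^{\ast})$ yields the discrete inf--sup condition with constant $1$ uniformly in $h$.
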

\begin{proof}
Since $\partial \omega^{\ast} \cap \partial \Omega_{D} \neq \emptyset$, the bilinear form $a_{\omega^{\ast}}(\cdot,\cdot)$ is coercive on $V_{h,D}(\omega^{\ast})$ and the discrete inf--sup condition on $a_{\omega^{\ast}}(\cdot,\cdot)$ is satisfied. Hence, it follows from Theorem 5.2.5 of \cite{boffi2013mixed} that the discrete problem \cref{eq:3-s2-3} has a unique solution satisfying \cref{eq:3-s2-4}.
\end{proof}
\begin{rem}\label{rem:3-1}
If $\partial \omega^{\ast}\cap \partial \Omega_{D} = \emptyset$, we further assume that the linear functional $\mathcal{F}$ satisfies $\mathcal{F}(c)=0$ for any $c\in\mathbb{R}$. Then \cref{lem:3-s2-1,lem:3-s2-2} still hold in this case with $H^{1}_{D}(\omega^{\ast})$ and $V_{h,D}(\omega^{\ast})$ replaced by $H^{1}(\omega^{\ast})/\mathbb{R}$ and $V_{h}(\omega^{\ast})/\mathbb{R}$, respectively.
\end{rem}

Next we give the continuous and discrete Caccioppoli-type inequalities which are essential for studying the continuous and discrete $A$-harmonic spaces.
\begin{lemma}[\cite{ma2021novel}]\label{lem:3-s2-3}
Assume that $\eta \in W^{1,\infty}(\omega^{\ast})$ satisfies $\eta({\bm x}) = 0$ on $\partial\omega^{\ast} \cap \Omega$. Then, for any $u,\,v\in H_{A,D}(\omega^{\ast})$,
\begin{equation}\label{eq:3-s2-5}
\int_{\omega^{\ast}} A \nabla (\eta u)\cdot \nabla (\eta v)\,d{\bm x} = \int_{\omega^{\ast}}(A\nabla \eta \cdot \nabla \eta) uv\,d{\bm x}.
\end{equation}
In particular,
\begin{equation}\label{eq:3-s2-5-0}
\Vert \eta u \Vert_{a, \omega^{\ast}} \leq \beta^{\frac12} \Vert \nabla \eta \Vert_{L^{\infty}(\omega^{\ast})} \Vert u \Vert_{L^{2}(\omega^{\ast})},\quad \forall u\in H_{A,D}(\omega^{\ast}),
\end{equation}
where $\beta$ is defined in \cref{eq:1-1-0}.
\end{lemma}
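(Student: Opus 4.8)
The plan is to prove the identity \cref{eq:3-s2-5} by a direct computation that exploits the $A$-harmonicity of $u$ and $v$ through a judicious choice of test functions, and then to deduce the estimate \cref{eq:3-s2-5-0} as an immediate corollary.

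First I would expand the integrand on the left-hand side of \cref{eq:3-s2-5} using the product rule $\nabla(\eta u) = \eta\nabla u + u\nabla\eta$ and $\nabla(\eta v) = \eta\nabla v + v\nabla\eta$. Invoking the pointwise symmetry of $A$, this gives
\[
A\nabla(\eta u)\cdot\nabla(\eta v) = \eta^{2}\, A\nabla u\cdot\nabla v + \eta v\, A\nabla u\cdot\nabla\eta + \eta u\, A\nabla v\cdot\nabla\eta + uv\, A\nabla\eta\cdot\nabla\eta,
\]
so the identity reduces to showing that, after integration over $\omega^{\ast}$, the first three terms cancel and only the last survives.

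The key step feeds $\eta^{2} v$ and $\eta^{2} u$ into the harmonicity relations defining $H_{A,D}(\omega^{\ast})$. I would first verify that these are admissible test functions, i.e. that $\eta^{2} v,\,\eta^{2} u\in H^{1}_{DI}(\omega^{\ast})$: since $\eta\in W^{1,\infty}(\omega^{\ast})$ and $u,v\in H^{1}(\omega^{\ast})$ the products lie in $H^{1}(\omega^{\ast})$, while $\eta = 0$ on $\partial\omega^{\ast}\cap\Omega$ together with $u,v\in H_{A,D}(\omega^{\ast})\subset H^{1}_{D}(\omega^{\ast})$ (so that $u=v=0$ on $\partial\omega^{\ast}\cap\partial\Omega_{D}$) force the products to vanish on all of $(\partial\omega^{\ast}\cap\partial\Omega_{D})\cup(\partial\omega^{\ast}\cap\Omega)$. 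Testing the $A$-harmonicity of $u$ against $\eta^{2} v$ and of $v$ against $\eta^{2} u$, and using $\nabla(\eta^{2} v) = \eta^{2}\nabla v + 2\eta v\nabla\eta$ together with the symmetry of $A$, yields the two representations
\[
\int_{\omega^{\ast}}\eta^{2}\,A\nabla u\cdot\nabla v\,d{\bm x} = -2\int_{\omega^{\ast}}\eta v\, A\nabla u\cdot\nabla\eta\,d{\bm x} = -2\int_{\omega^{\ast}}\eta u\, A\nabla v\cdot\nabla\eta\,d{\bm x}.
\]
Adding these two representations and dividing by two shows that $\int_{\omega^{\ast}}\eta^{2} A\nabla u\cdot\nabla v\,d{\bm x}$ equals the negative of the sum of the two cross-term integrals, which is exactly the cancellation of the first three terms and hence establishes \cref{eq:3-s2-5}.

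Finally, to obtain \cref{eq:3-s2-5-0} I would set $v = u$ in \cref{eq:3-s2-5}, bound $A\nabla\eta\cdot\nabla\eta\leq\beta|\nabla\eta|^{2}$ by the upper ellipticity bound in \cref{eq:1-1-0}, factor out $\|\nabla\eta\|_{L^{\infty}(\omega^{\ast})}^{2}$, and take square roots. I expect the only genuinely delicate point to be the admissibility of the test functions $\eta^{2} u$ and $\eta^{2} v$ — in particular the trace argument that they vanish on $(\partial\omega^{\ast}\cap\partial\Omega_{D})\cup(\partial\omega^{\ast}\cap\Omega)$, which is precisely where the hypothesis $\eta = 0$ on $\partial\omega^{\ast}\cap\Omega$ and the definition of $H_{A,D}(\omega^{\ast})$ enter. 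The remaining algebra — the product rule for $W^{1,\infty}\times H^{1}$ functions, the symmetry of $A$, and the symmetric combination of the two harmonicity identities — is routine.
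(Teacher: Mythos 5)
Your proof is correct and follows the standard route: the paper itself only cites this lemma from \cite{ma2021novel}, but the identity \cref{eq:3-s2-29} in the proof of the discrete analogue (\cref{lem:3-s2-5}) shows exactly the same algebraic decomposition, with the two terms $\int A\nabla u\cdot\nabla(\eta^{2}v)$ and $\int A\nabla v\cdot\nabla(\eta^{2}u)$ vanishing in the continuous case precisely because $\eta^{2}v,\eta^{2}u\in H^{1}_{DI}(\omega^{\ast})$ are admissible test functions for the $A$-harmonicity. Your verification of that admissibility and the deduction of \cref{eq:3-s2-5-0} are both sound, so nothing is missing.
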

\begin{rem}
\Cref{eq:3-s2-5} implies that the continuous eigenvalue problem \cref{eq:2-5} can be rewritten as 
\begin{equation}\label{eq:3-s2-5-1}
a_{\omega_{i}^{\ast}}(\phi, v) = \lambda\,\big(A\nabla\chi_{i}\cdot\nabla\chi_{i} \,\phi, \,v\big)_{L^{2}(\omega^{\ast}_{i})},\quad \forall v\in H_{A,D}(\omega_{i}^{\ast}).
\end{equation}
Similar to \cref{eq:3-15}, we can define an operator $P_{i}: H_{A,D}(\omega_{i}^{\ast})\rightarrow H_{0}^{1}(\omega_{i})$ as 
\begin{equation}\label{eq:3-s2-5-2}
P_{i}(v)({\bm x}) = \chi_{i}({\bm x}) v({\bm x})\quad\; \forall {\bm x}\in \omega_{i}.
\end{equation}
\Cref{eq:3-s2-5-0} implies that $P_{i}$ is a compact operator.
\end{rem}

To bound the FE error in the discrete MS-GFEM, we use the following result from \cite{demlow2011local}.
\begin{lemma}\label{lem:3-s2-4}
Let $\omega\subset\omega^{\ast}$ be open connected sets with $\delta:={dist}\,\big(\omega, \, \partial \omega^{\ast}/\partial \Omega\big)>0$. In addition, let $\max_{K\cap \omega^{\ast}\neq \emptyset}h_{K}\leq\frac{1}{4}\delta$. Then, for each $u_{h}\in W_{h}(\omega^{\ast})$,
\begin{equation}\label{eq:3-s2-14}
\Vert u_{h}\Vert_{a,\omega}\leq C\delta^{-1}\Vert u_{h} \Vert_{L^{2}(\omega^{\ast})},
\end{equation}
where $C$ depends only on $\alpha$, $\beta$, $d$, and the shape regularity of the mesh $\tau_{h}$.
\end{lemma}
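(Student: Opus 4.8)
The plan is to establish \cref{eq:3-s2-14} as a \emph{discrete Caccioppoli} (local energy) estimate, following the local error analysis of \cite{demlow2011local} and mirroring the continuous computation behind \cref{eq:3-s2-5-0}. First I would fix a cutoff $\eta\in W^{1,\infty}(\omega^{\ast})$ with $0\le\eta\le 1$, $\eta\equiv 1$ on $\omega$, $\eta\equiv 0$ in a neighborhood of $\partial\omega^{\ast}/\partial\Omega$, and $\Vert\nabla\eta\Vert_{L^{\infty}(\omega^{\ast})}\le C\delta^{-1}$, $\Vert\nabla^{2}\eta\Vert_{L^{\infty}(\omega^{\ast})}\le C\delta^{-2}$; the hypothesis $\delta>0$ guarantees such an $\eta$ with a transition layer of width comparable to $\delta$. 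Since $\nabla(\eta u_{h})=\nabla u_{h}$ on $\omega$, we have $\Vert u_{h}\Vert_{a,\omega}^{2}\le\Vert\eta u_{h}\Vert_{a,\omega^{\ast}}^{2}$, so it suffices to bound the latter. The decisive observation is that the nodal interpolant $I_{h}(\eta^{2}u_{h})$ is an admissible test function: $\eta^{2}u_{h}$ vanishes on $\partial\omega^{\ast}\cap\Omega$ (because $\eta$ does) and on $\partial\omega^{\ast}\cap\partial\Omega_{D}$ (because $u_{h}\in V_{h,D}(\omega^{\ast})$), whence $I_{h}(\eta^{2}u_{h})\in V_{h,DI}(\omega^{\ast})$ and the discrete $A$-harmonicity from the definition of $W_{h}(\omega^{\ast})$ in \cref{eq:3-1} gives $a_{\omega^{\ast}}(u_{h},I_{h}(\eta^{2}u_{h}))=0$.

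Expanding $\nabla(\eta u_h)=\eta\nabla u_h+u_h\nabla\eta$ and using the symmetry of $A$ yields the discrete Caccioppoli identity
\[
\Vert\eta u_{h}\Vert_{a,\omega^{\ast}}^{2}=a_{\omega^{\ast}}(u_{h},\eta^{2}u_{h})+\int_{\omega^{\ast}}(A\nabla\eta\cdot\nabla\eta)\,u_{h}^{2}\,d{\bm x}.
\]
The second term is bounded by $\beta\Vert\nabla\eta\Vert_{L^{\infty}(\omega^{\ast})}^{2}\Vert u_{h}\Vert_{L^{2}(\omega^{\ast})}^{2}\le C\delta^{-2}\Vert u_{h}\Vert_{L^{2}(\omega^{\ast})}^{2}$, exactly as in \cref{eq:3-s2-5-0}. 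For the first term I would subtract the vanishing quantity $a_{\omega^{\ast}}(u_{h},I_{h}(\eta^{2}u_{h}))$ and invoke a super-approximation estimate: since $u_{h}$ is piecewise linear, $\eta^{2}u_{h}-I_{h}(\eta^{2}u_{h})$ is supported in a neighborhood of the transition layer $S:=\mathrm{supp}(\nabla\eta)$ and satisfies $\Vert\eta^{2}u_{h}-I_{h}(\eta^{2}u_{h})\Vert_{a,\omega^{\ast}}\le Ch\big(\delta^{-2}\Vert u_{h}\Vert_{L^{2}(S)}+\delta^{-1}\Vert u_{h}\Vert_{a,S}\big)$. Combining with Cauchy--Schwarz and the hypothesis $h\le\tfrac14\delta$ (which makes $h\delta^{-1}\le\tfrac14$ and ensures each transition layer is resolved by several elements, so that super-approximation applies) produces a shell estimate of the form $\Vert u_{h}\Vert_{a,\omega}^{2}\le C_{0}\Vert u_{h}\Vert_{a,S}^{2}+C\delta^{-2}\Vert u_{h}\Vert_{L^{2}(\omega^{\ast})}^{2}$.

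The remaining energy term $\Vert u_{h}\Vert_{a,S}^{2}$ on the transition shell is the main obstacle: its constant $C_{0}$ need not be less than one, so it cannot simply be absorbed. I would therefore run the estimate on a nested family $\omega\subset\omega_{t}\subset\omega_{s}\subset\omega^{\ast}$, replacing $\delta$ by the shell width $s-t$, and then perform hole-filling --- writing $\Vert u_{h}\Vert_{a,S}^{2}=\Vert u_{h}\Vert_{a,\omega_{s}}^{2}-\Vert u_{h}\Vert_{a,\omega_{t}}^{2}$ and rearranging --- to obtain
\[
\Vert u_{h}\Vert_{a,\omega_{t}}^{2}\le\theta\,\Vert u_{h}\Vert_{a,\omega_{s}}^{2}+\frac{C}{(s-t)^{2}}\Vert u_{h}\Vert_{L^{2}(\omega^{\ast})}^{2},\qquad \theta:=\frac{C_{0}}{1+C_{0}}<1.
\]
Because $\theta<1$ \emph{independently} of $C_{0}$, a standard Giaquinta-type iteration lemma (applied along a geometric sequence of radii, valid while the shell width stays above the mesh-resolution floor $\sim h\le\tfrac14\delta$) absorbs the energy term and leaves only the $L^{2}$ contribution with the optimal weight $\delta^{-2}$, giving $\Vert u_{h}\Vert_{a,\omega}^{2}\le C\delta^{-2}\Vert u_{h}\Vert_{L^{2}(\omega^{\ast})}^{2}$, which is \cref{eq:3-s2-14}. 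All constants depend only on $\alpha,\beta,d$ and the shape regularity, entering through the coercivity and continuity of $a_{\omega^{\ast}}(\cdot,\cdot)$, the super-approximation constant, and the iteration lemma.
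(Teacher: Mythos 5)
You have correctly identified the structure of the argument: the paper itself does not prove \cref{lem:3-s2-4} but imports it from \cite{demlow2011local}, remarking only that the superapproximation estimate \cref{eq:3-s2-15} is the critical ingredient. Your reconstruction recovers all the right pieces --- the cutoff $\eta$, the exact product-rule identity $\Vert\eta u_h\Vert_{a,\omega^\ast}^2=a_{\omega^\ast}(u_h,\eta^2u_h)+\int(A\nabla\eta\cdot\nabla\eta)u_h^2$, the admissibility of $I_h(\eta^2u_h)$ as a test function in $V_{h,DI}(\omega^\ast)$, and superapproximation to control $a_{\omega^\ast}(u_h,\eta^2u_h-I_h(\eta^2u_h))$ --- and these are indeed the steps of the cited proof.

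The gap is in the final iteration. A Giaquinta-type absorption lemma needs the recursive inequality $f(t)\le\theta f(s)+C(s-t)^{-2}\Vert u_h\Vert_{L^2(\omega^\ast)}^2$ for a sequence of radii whose gaps $s-t$ shrink geometrically to zero, but your shell estimate is only valid while $s-t\gtrsim h$ (several mesh layers are needed for the cutoff and the superapproximation). Truncating the iteration at the mesh floor leaves a remainder $\theta^{K}\Vert u_h\Vert_{a,\omega^\ast}^2$ with $K\sim\log(\delta/h)$, and this term is \emph{not} controlled by $C\delta^{-2}\Vert u_h\Vert_{L^2(\omega^\ast)}^2$ on the strength of what you have written: a discretely $A$-harmonic $u_h$ can have $\Vert u_h\Vert_{a,\omega^\ast}/\Vert u_h\Vert_{L^2(\omega^\ast)}$ as large as $Ch^{-1}$ near $\partial\omega^\ast$. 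The missing ingredient is twofold. First, the coefficient of the shell energy term is not merely an $O(1)$ constant $C_0$: keeping track of the factor $h_K/\delta$ in \cref{eq:3-s2-15} shows it is $O\big(h/(s-t)\big)$, so a \emph{fixed} number of iterations (two suffices) with shells of width $\sim\delta$ already produces a prefactor $(Ch/\delta)^2$ in front of the surviving term $\Vert u_h\Vert_{a,\omega^\ast}^2$. Second, that surviving term must be closed with the global inverse inequality $\Vert u_h\Vert_{a,\omega^\ast}\le Ch^{-1}\Vert u_h\Vert_{L^2(\omega^\ast)}$, whereupon $(h/\delta)^2\cdot h^{-2}=\delta^{-2}$ gives exactly the claimed bound; this is also where the quantitative hypothesis $\max_{K\cap\omega^\ast\neq\emptyset}h_K\le\frac14\delta$ earns its keep. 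Without the inverse estimate (or an equivalent device), neither the hole-filling reformulation nor the choice $\theta=C_0/(1+C_0)$ closes the argument.
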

\begin{rem}
The proof of \cref{lem:3-s2-4} in \cite{demlow2011local} relies critically on the following $\mathit{superapproximation}$ result: Let $S^{r}_{h}$ be a finite element space consisting of continuous piecewise polynomials of degree $r$ and let $\eta\in W^{r+1,\infty}(\Omega)$ with $|\eta|_{W^{j,\infty}(\Omega)}\leq C\delta^{-j}$ for integers $0\leq j\leq r+1$. Denote by $I_{h}:C(\Omega)\rightarrow S^{r}_{h}$ the standard Lagrange interpolant. Then for each $u_{h}\in S^{r}_{h}$ and each $K\in \tau_{h}$ satisfying $h_{K}\leq \delta$,
\begin{equation}\label{eq:3-s2-15}
\Vert \eta^{2}u_{h}-I_{h}(\eta^{2}u_{h})\Vert_{H^{1}(K)}\leq C\Big(\frac{h_{K}}{\delta}\Vert\nabla(\eta u_{h})\Vert_{L^{2}(K)} + \frac{h_{K}}{\delta^{2}}\Vert u_{h} \Vert_{L^{2}(K)}\Big).
\end{equation}
The superapproximation estimate \cref{eq:3-s2-15} holds for standard Lagrange finite element spaces defined on shape-regular simplicial meshes and for tensor-product finite element spaces defined on rectangular meshes. For more superapproximation results, we refer to \cite{nitsche1974interior,bramble1975maximum,wahlbin2006superconvergence}.
\end{rem}

\begin{corollary}\label{cor:3-1}
Let $\omega$ and $\omega^{\ast}$ satisfy the same assumptions as in \cref{lem:3-s2-4}. Assume that $\eta \in W^{1,\infty}(\omega^{\ast})$ with $supp\,(\eta)\subset \overline{\omega}$. Then, for each $u_{h}\in W_{h}(\omega^{\ast})$,
\begin{equation}\label{eq:3-s2-25}
\Vert \eta u_{h}\Vert_{a,\omega^{\ast}}\leq \big(C\delta^{-1} \Vert \eta\Vert_{L^{\infty}(\omega)} + \beta^{1/2}\Vert \nabla \eta\Vert_{L^{\infty}(\omega)} \big)\Vert u_{h} \Vert_{L^{2}(\omega^{\ast})},
\end{equation}
where $C$ is the same constant as in \cref{eq:3-s2-14}.
\end{corollary}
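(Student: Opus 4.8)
The plan is to estimate $\Vert \eta u_{h}\Vert_{a,\omega^{\ast}}$ by expanding the gradient of the product via the Leibniz rule and then splitting the two resulting contributions. Writing $\nabla(\eta u_{h}) = u_{h}\nabla\eta + \eta\nabla u_{h}$, I would exploit that the map $\bm p\mapsto \big(\int_{\omega^{\ast}} A\bm p\cdot\bm p\,d{\bm x}\big)^{1/2}$ is a genuine norm on $(L^{2}(\omega^{\ast}))^{d}$, since $A$ is symmetric and uniformly positive definite by \cref{ass:1}. Consequently $\Vert\cdot\Vert_{a,\omega^{\ast}}$ obeys Minkowski's inequality at the level of gradients, giving
\[
\Vert \eta u_{h}\Vert_{a,\omega^{\ast}} \le \Big(\!\int_{\omega^{\ast}}\! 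A(u_{h}\nabla\eta)\cdot(u_{h}\nabla\eta)\,d{\bm x}\Big)^{\!1/2} + \Big(\!\int_{\omega^{\ast}}\! A(\eta\nabla u_{h})\cdot(\eta\nabla u_{h})\,d{\bm x}\Big)^{\!1/2}.
\]
The whole argument then reduces to bounding the two terms on the right-hand side.

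For the first term, I would use the upper bound $A\nabla\eta\cdot\nabla\eta\le\beta|\nabla\eta|^{2}$ from \cref{eq:1-1-0} together with $supp\,(\eta)\subset\overline{\omega}$ to restrict the integral to $\omega$ and factor out $\Vert\nabla\eta\Vert_{L^{\infty}(\omega)}$; this yields $\beta^{1/2}\Vert\nabla\eta\Vert_{L^{\infty}(\omega)}\Vert u_{h}\Vert_{L^{2}(\omega)}\le \beta^{1/2}\Vert\nabla\eta\Vert_{L^{\infty}(\omega)}\Vert u_{h}\Vert_{L^{2}(\omega^{\ast})}$, which is exactly the second summand in \cref{eq:3-s2-25}. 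For the second term, factoring out $\Vert\eta\Vert_{L^{\infty}(\omega)}$ and again using that $\eta$ is supported in $\overline{\omega}$ gives the bound $\Vert\eta\Vert_{L^{\infty}(\omega)}\Vert u_{h}\Vert_{a,\omega}$. At this point the crucial input is \cref{lem:3-s2-4}: since $u_{h}\in W_{h}(\omega^{\ast})$ and $\omega,\omega^{\ast}$ satisfy its hypotheses, the interior estimate \cref{eq:3-s2-14} converts the local energy norm into an $L^{2}$ norm over the larger set, $\Vert u_{h}\Vert_{a,\omega}\le C\delta^{-1}\Vert u_{h}\Vert_{L^{2}(\omega^{\ast})}$, producing the first summand $C\delta^{-1}\Vert\eta\Vert_{L^{\infty}(\omega)}\Vert u_{h}\Vert_{L^{2}(\omega^{\ast})}$.

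Adding the two bounds gives precisely \cref{eq:3-s2-25}, with $C$ inherited directly from \cref{eq:3-s2-14}. The step carrying all the real content is the control of the $\eta\nabla u_{h}$ contribution: morally $\eta$ varies on the scale $\delta$, so one expects $\Vert u_{h}\Vert_{a,\omega}$ to scale like $\delta^{-1}\Vert u_{h}\Vert_{L^{2}}$, but this is exactly the nontrivial discrete Caccioppoli/interior estimate of \cref{lem:3-s2-4}, which itself rests on the superapproximation property of $V_{h}$ and the discrete $A$-harmonicity of $u_{h}$. Since that lemma is already available, the remaining obstacle is purely bookkeeping: verifying that the support condition legitimately restricts the integrals from $\omega^{\ast}$ to $\omega$ and that the weighted-norm triangle inequality is applied to the two vector fields correctly.
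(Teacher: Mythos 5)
Your argument is correct and is essentially identical to the paper's proof: both expand $\nabla(\eta u_h)$ by the product rule, apply the triangle inequality in the $A$-weighted $L^2$ norm on gradients, bound the $u_h\nabla\eta$ term by $\beta^{1/2}\Vert\nabla\eta\Vert_{L^{\infty}(\omega)}\Vert u_h\Vert_{L^2(\omega)}$ and the $\eta\nabla u_h$ term by $\Vert\eta\Vert_{L^{\infty}(\omega)}\Vert u_h\Vert_{a,\omega}$, and then invoke the discrete Caccioppoli estimate of \cref{lem:3-s2-4}. No gaps; your identification of \cref{lem:3-s2-4} as the step carrying the real content matches the paper's structure exactly.
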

\begin{proof}
Using the chain rule, the triangle inequality, and the assumptions on $\eta$, we have
\begin{equation}\label{eq:3-s2-26}
\begin{array}{lll}
{\displaystyle \Vert \eta u_{h} \Vert_{a, \omega^{\ast}}\leq \Big(\int_{\omega}(A\nabla u_{h} \cdot \nabla u_{h}) \eta^{2}\,d{\bm x}\Big)^{\frac12} +  \Big(\int_{\omega}(A\nabla \eta \cdot \nabla \eta) u_{h}^{2}\,d{\bm x}\Big)^{\frac12} }\\[4mm]
{\displaystyle \qquad \qquad\;\; \leq \Vert \eta\Vert_{L^{\infty}(\omega)}\Vert u_{h}\Vert_{a,\omega} + \beta^{1/2}\Vert \nabla \eta\Vert_{L^{\infty}(\omega)} \Vert u_{h} \Vert_{L^{2}(\omega)}. }
\end{array}
\end{equation}
\Cref{eq:3-s2-25} follows immediately from \cref{eq:3-s2-26} and \cref{lem:3-s2-4}.
\end{proof}

In general, the equation \cref{eq:3-s2-5} does not hold for discrete $A$-harmonic functions. However, we have
\begin{lemma}\label{lem:3-s2-5}
Assume that $\eta \in W^{2,\infty}(\omega^{\ast})$ satisfies $\eta({\bm x}) = 0$ on $\partial\omega^{\ast} \cap \Omega$. Then there exists a constant $C$ independent of $h$, such that for any $u_{h}$, $v_{h}\in W_{h}(\omega^{\ast})$,
\begin{equation}\label{eq:3-s2-27}
\Big|\int_{\omega^{\ast}} A \nabla (\eta u_{h})\cdot \nabla (\eta v_{h})\,d{\bm x} - \int_{\omega^{\ast}}(A\nabla \eta \cdot \nabla \eta) u_{h}v_{h}\,d{\bm x}\Big|\leq Ch \Vert u_{h}\Vert_{H^{1}(\omega^{\ast})} \Vert v_{h}\Vert_{H^{1}(\omega^{\ast})}.
\end{equation}
\end{lemma}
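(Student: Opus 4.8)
The plan is to reduce the bilinear-form identity to the discrete $A$-harmonicity of $u_h$ and $v_h$, paying for the non-conformity through the superapproximation estimate \cref{eq:3-s2-15}. First I would expand the integrand pointwise via the product rule $\nabla(\eta w)=\eta\nabla w + w\nabla\eta$ and the symmetry of $A$. Setting
\[
I_1=\int_{\omega^{\ast}}\eta^2\,A\nabla u_h\cdot\nabla v_h,\quad I_2=\int_{\omega^{\ast}}\eta v_h\,A\nabla u_h\cdot\nabla\eta,\quad I_3=\int_{\omega^{\ast}}\eta u_h\,A\nabla v_h\cdot\nabla\eta,
\]
a direct computation shows that the left-hand side of \cref{eq:3-s2-27} equals $|I_1+I_2+I_3|$, while $a_{\omega^{\ast}}(u_h,\eta^2 v_h)=I_1+2I_2$ and $a_{\omega^{\ast}}(v_h,\eta^2 u_h)=I_1+2I_3$. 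Hence the decisive algebraic identity is
\[
I_1+I_2+I_3=\tfrac12\big(a_{\omega^{\ast}}(u_h,\eta^2 v_h)+a_{\omega^{\ast}}(v_h,\eta^2 u_h)\big),
\]
and it suffices to bound each term on the right by $Ch\,\Vert u_h\Vert_{H^1(\omega^{\ast})}\Vert v_h\Vert_{H^1(\omega^{\ast})}$. In the continuous setting of \cref{lem:3-s2-3} the product $\eta^2 v_h$ is an admissible test function in $H^1_{DI}(\omega^{\ast})$ and both terms vanish identically; the entire difficulty in the discrete case is that $\eta^2 v_h$ is not piecewise polynomial and thus lies outside $V_{h,DI}(\omega^{\ast})$.

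To circumvent this I would interpolate. Since $v_h\in W_h(\omega^{\ast})\subset V_{h,D}(\omega^{\ast})$ vanishes on $\partial\omega^{\ast}\cap\partial\Omega_D$ and $\eta$ vanishes on $\partial\omega^{\ast}\cap\Omega$ by hypothesis, the continuous function $\eta^2 v_h$ vanishes on $(\partial\omega^{\ast}\cap\partial\Omega_D)\cup(\partial\omega^{\ast}\cap\Omega)$; its Lagrange interpolant $I_h(\eta^2 v_h)$ therefore vanishes at every node on that boundary portion and belongs to $V_{h,DI}(\omega^{\ast})$. The discrete $A$-harmonicity of $u_h$ (the defining property of $W_h(\omega^{\ast})$ in \cref{eq:3-1}) then gives $a_{\omega^{\ast}}(u_h,I_h(\eta^2 v_h))=0$, whence
\[
a_{\omega^{\ast}}(u_h,\eta^2 v_h)=a_{\omega^{\ast}}\big(u_h,\ \eta^2 v_h-I_h(\eta^2 v_h)\big).
\]

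The remaining step is a Cauchy--Schwarz estimate combined with superapproximation. Using the upper bound $\beta$ on $A$, I would bound the right-hand side by $\beta\Vert\nabla u_h\Vert_{L^2(\omega^{\ast})}\,\Vert\eta^2 v_h-I_h(\eta^2 v_h)\Vert_{H^1(\omega^{\ast})}$, then apply \cref{eq:3-s2-15} elementwise (with $r=1$, which is exactly why $\eta\in W^{2,\infty}(\omega^{\ast})$ is needed) and sum over the elements $K\subset\omega^{\ast}$. As $\eta$ has fixed $W^{2,\infty}$ norm the geometric factor $\delta$ is absorbed into $C$, and using $h_K\le h$ together with $\Vert\nabla(\eta v_h)\Vert_{L^2}\le C\Vert v_h\Vert_{H^1}$ yields $\Vert\eta^2 v_h-I_h(\eta^2 v_h)\Vert_{H^1(\omega^{\ast})}\le Ch\,\Vert v_h\Vert_{H^1(\omega^{\ast})}$, hence $|a_{\omega^{\ast}}(u_h,\eta^2 v_h)|\le Ch\,\Vert u_h\Vert_{H^1(\omega^{\ast})}\Vert v_h\Vert_{H^1(\omega^{\ast})}$. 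The symmetric argument controls $a_{\omega^{\ast}}(v_h,\eta^2 u_h)$, and the identity above closes the proof. I expect the main obstacle to be the careful verification that $I_h(\eta^2 v_h)$ satisfies the boundary conditions defining $V_{h,DI}(\omega^{\ast})$, so that discrete $A$-harmonicity may legitimately be invoked, together with the correct application of the superapproximation estimate that produces the crucial factor of $h$.
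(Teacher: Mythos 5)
Your proposal is correct and follows essentially the same route as the paper: the algebraic identity reducing the left-hand side to $\tfrac12\big(a_{\omega^{\ast}}(u_h,\eta^2 v_h)+a_{\omega^{\ast}}(v_h,\eta^2 u_h)\big)$ is exactly the paper's \cref{eq:3-s2-29}, and the subsequent use of $a_{\omega^{\ast}}(u_h,I_h(\eta^2 v_h))=0$ together with the superapproximation estimate \cref{eq:3-s2-15} is the paper's argument verbatim. Your explicit verification that $I_h(\eta^2 v_h)$ lies in $V_{h,DI}(\omega^{\ast})$ is a welcome detail the paper leaves implicit, but it does not change the substance of the proof.
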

\begin{proof}
For any $u_{h}$, $v_{h}\in V_{h}(\omega^{\ast})$, a direct calculation gives
\begin{equation}\label{eq:3-s2-28}
\begin{array}{lll}
{\displaystyle \int_{\omega^{\ast}} A\nabla (\eta u_{h})\cdot \nabla (\eta v_{h})\,d{\bm x} = \int_{\omega^{\ast}}(A\nabla \eta \cdot \nabla \eta) u_{h}v_{h}\,d{\bm x} - \int_{\omega^{\ast}}(A\nabla u_{h} \cdot \nabla \eta) \eta v_{h}\,d{\bm x}}\\[4mm]
{\displaystyle \qquad \qquad +\,\int_{\omega^{\ast}}(A\nabla \eta \cdot \nabla v_{h}) \eta u_{h}\,d{\bm x} + \int_{\omega^{\ast}}A\nabla u_{h} \cdot \nabla (\eta^{2}v_{h})\,d{\bm x}.}
\end{array}
\end{equation}
Exchanging $u_{h}$ and $v_{h}$, we can get a similar equation as \cref{eq:3-s2-28}. Adding the two equations together and using the symmetry of the matrix $A$, it follows that
\begin{equation}\label{eq:3-s2-29}
\begin{array}{lll}
{\displaystyle \int_{\omega^{\ast}} A \nabla (\eta u_h)\cdot \nabla (\eta v_h)\,d{\bm x} - \int_{\omega^{\ast}}(A\nabla \eta \cdot \nabla \eta) u_h v_h\,d{\bm x} }\\[3mm]
{\displaystyle  = \frac{1}{2}\Big(\int_{\omega^{\ast}}A\nabla u_h \cdot \nabla (\eta^{2}v_h)\,d{\bm x} \,+\, \int_{\omega^{\ast}}A\nabla v_h \cdot \nabla (\eta^{2}u_h)\,d{\bm x}\Big).}
\end{array}
\end{equation}
Now assume that $u_h$, $v_h\in W_{h}(\omega^{\ast})$. Then, we have $a_{\omega^{\ast}}(u_h,\,I_{h}(\eta^{2}v_h)) = 0$, where $I_{h}$ is the standard Lagrange interpolant. Using the superapproximation estimate \cref{eq:3-s2-15}, we obtain
\begin{equation}\label{eq:3-s2-30}
\begin{array}{lll}
{\displaystyle |a_{\omega^{\ast}}(u_h,\,\eta^{2}v_h)| =|a_{\omega^{\ast}}(u_h,\,\eta^{2}v_h-I_{h}(\eta^{2}v_h))| }\\[2mm]
{\displaystyle \leq \Vert u_h\Vert_{a,\omega^{\ast}} \Vert \eta^{2}v_h-I_{h}(\eta^{2}v_h)\Vert_{a,\omega^{\ast}} \leq Ch \Vert u_h\Vert_{H^{1}(\omega^{\ast})} \Vert v_h\Vert_{H^{1}(\omega^{\ast})}.}
\end{array}
\end{equation}
Similarly, we have $|a_{\omega^{\ast}}(v_h,\,\eta^{2}u_h)|\leq Ch \Vert u_h\Vert_{H^{1}(\omega^{\ast})} \Vert v_h\Vert_{H^{1}(\omega^{\ast})}$. Combining these two inequalities with \cref{eq:3-s2-29}, we get \cref{eq:3-s2-27}.
\end{proof}
\begin{rem}
The assumption on $\eta$ can be weakened. In fact, we only need $\eta\in W^{1,\infty}(\omega^{\ast})$ and that $\eta\in W^{2,\infty}(K)$ for each $K\in \tau_{h}$ in the proof. \Cref{eq:3-s2-27} shows that the discrete eigenvalue problem \cref{eq:3-5} differs from the problem
\begin{equation}
a_{\omega_{i}^{\ast}}(\phi_{h}, v_h) = \lambda_{h}\,\big(A\nabla\chi_{i}\cdot\nabla\chi_{i} \,\phi_{h}, \,v_h\big)_{L^{2}(\omega^{\ast}_{i})},\quad \forall v_h\in W_{h}(\omega^{\ast})
\end{equation}
by an order of $O(h)$.
\end{rem}

\section{Convergence analysis}\label{sec-4}
We have shown in \cref{thm:3-2} that the local approximation error in each subdomain is bounded by the $n$-width of the operator $P_{h,i}$, or equivalently, the square root of the $(n_{i}+1)$-th eigenvalue of \cref{eq:3-5}. In this section, we justify our method theoretically in two ways. First we prove that as $h\rightarrow 0$, the eigenvalues of the discrete problem \cref{eq:3-5} converge to the eigenvalues of the continuous problem \cref{eq:2-5}, which have been shown to decay nearly exponentially. Consequently, the error bound of the discrete MS-GFEM converges to that of the continuous MS-GFEM as $h\rightarrow 0$; see Remark~\ref{rem:4-0}. Next we prove directly that the $n$-width of the operator $P_{h,i}$ decays nearly exponentially if $h$ is sufficiently small, thus guaranteeing good approximation properties of the discrete MS-GFEM also for fixed $h$. To simplify the notation, we omit the subscript index $i$ of subdomains in this section.
%
%
%
%
%
%
%
%

\subsection{Convergence of the eigenvalues}
\cm{We first formulate the continuous and discrete eigenproblems \cref{eq:2-5,eq:3-5} as  spectral problems of compact operators. If $\partial \omega^{\ast}\cap\partial \Omega_{D}\neq \emptyset$, recalling the definition of the compact operator $P$ in \cref{eq:3-s2-5-2}, we define the operator $T=P^{\ast}P:H_{A,D}(\omega^{\ast})\rightarrow H_{A,D}(\omega^{\ast})$ such that for each $u\in H_{A,D}(\omega^{\ast})$, $Tu\in H_{A,D}(\omega^{\ast})$ satisfies
\begin{equation}\label{eq:4-1}
a_{\omega^{\ast}}(Tu, v) = a_{\omega}(\chi u, \chi v),\quad \forall v\in H_{A,D}(\omega^{\ast}).
\end{equation}
Similarly, we define the discrete operator $T_{h}=P_{h}^{\ast}P_{h}:W_{h}(\omega^{\ast})\rightarrow W_{h}(\omega^{\ast})$ such that for each $u\in W_{h}(\omega^{\ast})$, $T_{h}u\in W_{h}(\omega^{\ast})$ satisfies
\begin{equation}\label{eq:4-2}
a_{\omega^{\ast}}(T_{h}u, v) = a_{\omega}(\chi u, \chi v),\quad \forall v\in W_{h}(\omega^{\ast}).
\end{equation}
If $\partial \omega^{\ast}\cap\partial \Omega_{D} = \emptyset$, we define the operator $T$ on a subspace of ${H}_{A,D}(\omega^{\ast})$, i.e., 
\begin{equation}\label{eq:4-33-0}
\interior{H}_{A,D}(\omega^{\ast}) = \big\{u \in  {H}_{A,D}(\omega^{\ast})\,:\, \mathcal{M}_{\omega}(u) = 0 \big\}
\end{equation}
with the operator $\mathcal{M}_{\omega}(\cdot)$ defined in \cref{eq:3-25}, such that for each $u\in \interior{H}_{A,D}(\omega^{\ast})$, $Tu\in \interior{H}_{A,D}(\omega^{\ast})$ satisfies
\begin{equation}\label{eq:4-1-0}
a_{\omega^{\ast}}(Tu, v) = a_{\omega}(\chi u, \chi v),\quad \forall v\in \interior{H}_{A,D}(\omega^{\ast}).
\end{equation}
Similarly, the operator $T_{h}$ is defined on $\interior{W}_{h}(\omega^{\ast})$ such that for each $u\in \interior{W}_{h}(\omega^{\ast})$, $T_{h}u\in \interior{W}_{h}(\omega^{\ast})$ satisfies
\begin{equation}\label{eq:4-2-0}
a_{\omega^{\ast}}(T_{h}u, v) = a_{\omega}(\chi u, \chi v),\quad \forall v\in \interior{W}_{h}(\omega^{\ast}),
\end{equation}
where $\interior{W}_{h}(\omega^{\ast})$ is defined in \cref{eq:3-24}.

Note that $T$ and $T_{h}$ ($0<h\leq 1$) are positive, self-adjoint, and compact operators. Consider the spectral problem for the operator $T$
\begin{equation}\label{eq:4-3}
\begin{array}{lll}
{\displaystyle u_{k}\in H_{A,D}(\omega^{\ast})\,\,({\it resp}.\,\interior{H}_{A,D}(\omega^{\ast})), \quad Tu_{k} = \mu_{k} u_{k},\quad k=1,2,\cdots,}\\[2mm]
{\displaystyle \mu_{1}\geq \mu_{2} \geq\cdots\mu_{k}\geq\cdots,\quad \mu_{k}>0,}
\end{array}
\end{equation}
and the spectral problems for $T_{h}$
\begin{equation}\label{eq:4-4}
\begin{array}{lll}
{\displaystyle u_{h,k}\in W_{h}(\omega^{\ast})\,\,({\it resp}.\,\interior{W}_{h}(\omega^{\ast})), \quad T_{h}u_{h,k} = \mu_{h,k} u_{h,k},\quad k=1,2,\cdots,}\\[2mm]
{\displaystyle \mu_{h,1}\geq \mu_{h,2} \geq\cdots\mu_{h,k}\geq\cdots,\quad \mu_{h,k}>0,}
\end{array}
\end{equation}
where the eigenvalues are enumerated in a nonascending order and repeated according to their multiplicities. It is easy to see that the eigenproblems \cref{eq:2-5,eq:3-5} are equivalent to \cref{eq:4-3,eq:4-4} in the case that $\partial \omega^{\ast}\cap\partial \Omega_{D}\neq \emptyset$ with $\mu_{k}=1/\lambda_{k}$ and $\mu_{h,k} = 1/\lambda_{h,k}$, respectively. If $\partial \omega^{\ast}\cap\partial \Omega_{D}= \emptyset$, \cref{eq:2-5,eq:3-5} are equivalent to \cref{eq:4-3,eq:4-4} up to a constant eigenfunction, respectively; see the proof of \cref{thm:3-2}.}

We aim to prove that for each $k\in \mathbb{N}$, $\mu_{h,k} \rightarrow \mu_{k}$ as $h\rightarrow 0$. This means that all the eigenvalues of \cref{eq:4-3} are well approximated and there are no spurious eigenvalues that pollute the spectrum. Note that \cref{eq:4-4} is a non-conforming approximation of \cref{eq:4-3} since $W_{h}(\omega^{\ast}) \nsubseteq H_{A,D}(\omega^{\ast})$. 

To prove the convergence of eigenvalues, we adapt an abstract framework to our problem that has been developed in \cite[Chapter 11]{jikov2012homogenization} for the investigation of spectral problems in homogenization theory. To the best of our knowledge, this is the first time that this abstract framework is used to study FE approximations of eigenvalue problems.

\begin{assumption}\label{ass:2} The spaces $W_{h}(\omega^{\ast})$, $H_{A,D}(\omega^{\ast})\;({\it resp}.\,\interior{W}_{h}(\omega^{\ast}),\,\interior{H}_{A,D}(\omega^{\ast}))$ and the operators $T_{h}$, $T$ satisfy the following conditions. (For ease of notation, we only give the conditions for the case that $\partial \omega^{\ast}\cap\partial \Omega_{D}\neq \emptyset$).

\vspace{3mm}
{\rm I}. \emph{There exist continuous linear operators $R_{h}:H_{A,D}(\omega^{\ast})\rightarrow W_{h}(\omega^{\ast})$ such that
\begin{equation}\label{eq:4-5}
\Vert R_{h}u\Vert_{a,\omega^{\ast}}\leq c_{0} \Vert u\Vert_{a,\omega^{\ast}}, \quad \forall u\in H_{A,D}(\omega^{\ast}),
\end{equation}
\hspace{1cm} where the constant $c_{0}$ is independent of $h$; moreover, 
\begin{equation}\label{eq:4-6}
\lim_{h\rightarrow0}a_{\omega^{\ast}}(u_{h}, v_{h}) = a_{\omega^{\ast}}(u, v),
\end{equation}
\hspace{1cm}provided that}
\begin{equation}\label{eq:4-7}
\begin{array}{lll}
{\displaystyle \lim_{h\rightarrow 0}\Vert u_{h} - R_{h}u\Vert_{a,\omega^{\ast}} = 0,\quad \lim_{h\rightarrow 0}\Vert v_{h} - R_{h}v\Vert_{a,\omega^{\ast}} = 0 }\\[4mm]
{\displaystyle \qquad \quad u_{h},\, v_{h}\in W_{h}(\omega^{\ast}),\quad u,\,v\in H_{A,D}(\omega^{\ast}).}
\end{array}
\end{equation}

{\rm II}. \emph{The operators $T_{h}$, $T$ are positive, compact and self-adjoint; and the norms} 

\emph{$\quad \,\;\Vert T_{h}\Vert=\Vert T_{h}\Vert_{\mathcal{L}(W_{h}(\omega^{\ast}))}$ are bounded by a constant independent of $h$.}

\vspace{3mm}
{\rm III}. \emph{If $\psi_{h}\in W_{h}(\omega^{\ast})$, $\psi\in H_{A,D}(\omega^{\ast})$ and 
\begin{equation}\label{eq:4-8}
\lim_{h\rightarrow 0}\Vert \psi_{h} - R_{h}\psi \Vert_{a,\omega^{\ast}} = 0,
\end{equation}
\hspace{1.3cm}then
\begin{equation}\label{eq:4-9}
\lim_{h\rightarrow 0}\Vert T_{h}\psi_{h} - R_{h}T\psi \Vert_{a,\omega^{\ast}} = 0.
\end{equation}
}

{\rm IV}. \emph{For any sequence $\psi_{h}\in W_{h}(\omega^{\ast})$ such that $\sup_{h\in (0,1]}\Vert \psi_{h} \Vert_{a,\omega^{\ast}}<\infty$, there} 

\hspace{0.65cm}\emph{exists a subsequence $\psi_{h^{\prime}}$ and a function $u\in H_{A,D}(\omega^{\ast})$ such that 
\begin{equation}\label{eq:4-10}
\Vert T_{h^{\prime}}\psi_{h^{\prime}} - R_{h^{\prime}}u \Vert_{a,\omega^{\ast}}\rightarrow 0\quad as \quad h^{\prime}\rightarrow 0.
\end{equation}
}
\end{assumption}

By Lemma 11.3 and Theorem 11.4 of \cite{jikov2012homogenization}, we have
\begin{theorem}\label{thm:4-1}
Let the spaces $W_{h}(\omega^{\ast})$, $H_{A,D}(\omega^{\ast})\;({\it resp}.\,\interior{W}_{h}(\omega^{\ast}),\,\interior{H}_{A,D}(\omega^{\ast}))$ and the operators $T_{h}$, $T$ satisfy conditions {\rm I} - {\rm IV} in \cref{ass:2}. Then
\begin{equation}\label{eq:4-11}
\mu_{h,k}\rightarrow\mu_{k},\quad k=1,2,\cdots, \;\;{\rm as}\;\;h\rightarrow 0,
\end{equation}
where $\mu_{k}$ and $\mu_{h,k}$ are eigenvalues of problems \cref{eq:4-3,eq:4-4}, respectively. Moreover, for sufficiently small $h$,
\begin{equation}\label{eq:4-12}
|\mu_{h,k} - \mu_{k}|\leq 2\sup_{\substack{u\in N(\mu_k, T),\\ \Vert u\Vert_{a,\omega^{\ast}}=1}}\Vert T_{h}R_{h} u - R_{h}Tu\Vert_{a,\omega^{\ast}},\quad k=1,2,\cdots,
\end{equation}
where $N(\mu_k, T)$ is the eigenspace of operator $T$ corresponding to the eigenvalue $\mu_{k}$:
\begin{equation}\label{eq:4-13}
N(\mu_k, T) = \big\{u\in H_{A,D}(\omega^{\ast})\,\,(\interior{H}_{A,D}(\omega^{\ast}))\;:\; Tu = \mu_{k}u \big\}.
\end{equation}
\end{theorem}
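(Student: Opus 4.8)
The plan is to recognise Theorem~\ref{thm:4-1} as an instance of the abstract spectral convergence theory for discretely converging families of positive, compact, self-adjoint operators, and to reduce the two assertions \cref{eq:4-11} and \cref{eq:4-12} to the Courant--Fischer characterisation of the eigenvalues. Since $T$ and $T_h$ are self-adjoint and positive with respect to $a_{\omega^{\ast}}(\cdot,\cdot)$, with $a_{\omega^{\ast}}(Tu,u)=\|\chi u\|_{a,\omega}^{2}$, the eigenvalues admit the min--max representation $\mu_k=\max_{\dim V=k}\min_{0\neq v\in V}\|\chi v\|_{a,\omega}^{2}/\|v\|_{a,\omega^{\ast}}^{2}$ over subspaces $V\subset H_{A,D}(\omega^{\ast})$, and analogously for $\mu_{h,k}$ over subspaces of $W_h(\omega^{\ast})$. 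Convergence \cref{eq:4-11} then splits into a lower and an upper semicontinuity estimate, which I would establish separately.

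For the lower bound $\liminf_{h\to 0}\mu_{h,k}\geq\mu_k$, I would take the continuous top-$k$ eigenspace $E_k=\mathrm{span}\{u_1,\dots,u_k\}$ and lift it to $R_hE_k\subset W_h(\omega^{\ast})$. Condition~I, in particular \cref{eq:4-6}, guarantees that $R_h$ asymptotically preserves the $a_{\omega^{\ast}}$-inner product, so $R_h$ is injective on $E_k$ for small $h$ and $R_hE_k$ is a valid $k$-dimensional trial space. Using condition~III with $\psi_h=R_hu$ together with \cref{eq:4-6}, the discrete Rayleigh quotient $a_{\omega^{\ast}}(T_hR_hu,R_hu)/\|R_hu\|_{a,\omega^{\ast}}^{2}$ converges to $a_{\omega^{\ast}}(Tu,u)/\|u\|_{a,\omega^{\ast}}^{2}$ for each $u\in E_k$; since $E_k$ is finite-dimensional this convergence is uniform on its unit sphere, and feeding $R_hE_k$ into the discrete min--max yields the claim.

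The reverse bound $\limsup_{h\to 0}\mu_{h,k}\leq\mu_k$ is the main obstacle, and is exactly where the compactness condition~IV and the non-conformity $W_h(\omega^{\ast})\nsubseteq H_{A,D}(\omega^{\ast})$ enter. Here I would argue by contradiction: assuming $\mu_{h,k}\to\bar\mu>\mu_k$ along a subsequence, take $a_{\omega^{\ast}}$-orthonormal discrete eigenvectors $u_{h,1},\dots,u_{h,k}$, which are uniformly bounded, and invoke condition~IV (diagonally over $j=1,\dots,k$) to extract a further subsequence along which $T_hu_{h,j}=\mu_{h,j}u_{h,j}$ discretely converges to $R_hw_j$ for some $w_j\in H_{A,D}(\omega^{\ast})$. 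Dividing by the limits of $\mu_{h,j}$, which are bounded below by $\bar\mu>\mu_k>0$, shows that $u_{h,j}$ discretely converges to limits $u_j^{\ast}$, which are $a_{\omega^{\ast}}$-orthonormal via \cref{eq:4-6} and satisfy $Tu_j^{\ast}=\bar\mu_ju_j^{\ast}$ by passing to the limit in the discrete eigenrelation using condition~III. This produces $k$ independent eigenvectors of $T$ with eigenvalues $\geq\bar\mu>\mu_k$, contradicting that $\mu_k$ is the $k$-th eigenvalue; the delicate point is the clean identification of the $u_j^{\ast}$ as genuine (non-spurious) eigenvectors of the \emph{continuous} operator $T$, which is precisely where the non-conformity must be absorbed by the connecting operators $R_h$.

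Finally, for the quantitative estimate \cref{eq:4-12} I would localise the argument to the eigenspace $N(\mu_k,T)$ and apply an Osborn-type perturbation bound, in which the eigenvalue defect is controlled, to leading order, by the consistency quantity $\sup_{u\in N(\mu_k,T),\,\|u\|_{a,\omega^{\ast}}=1}\|T_hR_hu-R_hTu\|_{a,\omega^{\ast}}$ measuring how far $T_h$ commutes with the lifting $R_h$ on the eigenspace. This is exactly the content packaged in Lemma~11.3 and Theorem~11.4 of \cite{jikov2012homogenization}, so an equally valid route--and the one I would ultimately adopt for brevity--is simply to verify that conditions~I--IV place us within the hypotheses of that abstract framework and to invoke it directly, both for \cref{eq:4-11} and \cref{eq:4-12}.
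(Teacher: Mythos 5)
Your proposal is sound as a proof of the implication that the theorem literally states --- that conditions I--IV of \cref{ass:2} imply \cref{eq:4-11} and \cref{eq:4-12} --- and your min--max/compactness sketch is essentially a reconstruction of Lemma 11.3 and Theorem 11.4 of \cite{jikov2012homogenization}, which is exactly what you fall back on at the end and exactly what the paper cites for this implication. However, the paper's own proof of \cref{thm:4-1} does something different: it takes the abstract implication entirely from \cite{jikov2012homogenization} and devotes the whole argument to \emph{verifying} that the concrete operators $T$, $T_h$ and spaces $H_{A,D}(\omega^{\ast})$, $W_{h}(\omega^{\ast})$ actually satisfy conditions I--IV. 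That verification is the substantive content: $R_h$ is constructed as the restriction to $H_{A,D}(\omega^{\ast})$ of the Ritz projection onto $V_{h,D}(\omega^{\ast})$ (which maps $A$-harmonic functions into discrete $A$-harmonic functions); condition III is obtained by rewriting $T$ and $T_h$ as saddle-point problems and applying the mixed-FEM quasi-optimality of \cref{lem:3-s2-1,lem:3-s2-2}; and condition IV --- the delicate compactness step that absorbs the non-conformity $W_h(\omega^{\ast})\nsubseteq H_{A,D}(\omega^{\ast})$ --- combines weak $H^{1}$ compactness, the Rellich theorem, and the discrete Caccioppoli-type bound of \cref{cor:3-1} to upgrade weak convergence to the strong discrete convergence \cref{eq:4-10}. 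So the two arguments are complementary rather than overlapping: you prove (or cite) the abstract spectral-convergence machinery while treating I--IV as given, whereas the paper cites the machinery and proves I--IV. Read strictly as a conditional statement, your route is legitimate and complete up to the cited reference; but none of the paper's actual proof is reproduced by your argument, and in particular the estimate \cref{eq:4-12} with its specific constant $2$ is not derived by your Osborn-type sketch --- it, too, is only obtained by invoking \cite{jikov2012homogenization}.
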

\begin{rem}\label{rem:4-0}
\Cref{eq:4-11} indicates that $1/\lambda_{h,k}\rightarrow 1/\lambda_{k}$ as $h\rightarrow 0$, where $\lambda_{h,k}$ and $\lambda_{k}$ are the eigenvalues of the discrete problem \cref{eq:3-5} and the continuous problem \cref{eq:2-5}, respectively. Combining \cref{thm:4-1,thm:3-3} gives that 
\begin{equation}
\begin{array}{lll}
{\displaystyle \big\Vert u^{e} - u^{G}_{h} \big\Vert_{a} \leq  \Vert u^{e} - u^{e}_{h} \Vert_{a} +\sqrt{\kappa\kappa^{\ast}}\big(\max_{i=1,\cdots,M}\lambda^{-1/2}_{h,n_{i}+1}\big)\Vert u^{e}_{h}\Vert_{a}  }\\[3mm]
{\displaystyle \qquad \qquad \qquad \longrightarrow \sqrt{\kappa\kappa^{\ast}}\big(\max_{i=1,\cdots,M}\lambda^{-1/2}_{n_{i}+1}\big)\Vert u^{e}\Vert_{a} \quad {\rm as}\;\;h\rightarrow 0.}
\end{array}
\end{equation}
Therefore, the error bound of the discrete MS-GFEM converges to that of the continuous MS-GFEM; see \cref{thm:1-0,thm:2-1}.
\end{rem}
\begin{proof}
In what follows, we check the validity of conditions I - IV. We first assume that $\partial \omega^{\ast}\cap\partial \Omega_{D}\neq \emptyset$ and introduce the $H^{1}_{D}(\omega^{\ast})$ orthogonal projection, denoted by $\Pi_{h}:H^{1}_{D}(\omega^{\ast})\rightarrow V_{h,D}(\omega^{\ast})$, such that for each $u\in H^{1}_{D}(\omega^{\ast})$, $\Pi_{h}u\in V_{h,D}(\omega^{\ast})$ satisfies
\begin{equation}\label{eq:4-14}
a_{\omega^{\ast}}(\Pi_{h}u,v) = a_{\omega^{\ast}}(u,v),\quad \forall v\in V_{h,D}(\omega^{\ast}).
\end{equation}
If $u\in H_{A,D}(\omega^{\ast})$, it follows that $a_{\omega^{\ast}}(\Pi_{h}u,v) = a_{\omega^{\ast}}(u,v)=0$ for any $v\in V_{h,DI}(\omega^{\ast})$, which implies that $\Pi_{h}u\in W_{h}(\omega^{\ast})$. Hence, $\Pi_{h}H_{A,D}(\omega^{\ast})\subset W_{h}(\omega^{\ast})$. Now we define the continuous linear operators $R_{h}$ in condition I as $R_{h} = \Pi_{h}|_{H_{A,D}(\omega^{\ast})}$. Obviously, by (4.14) we have $\Vert R_{h}u\Vert_{a,\omega^{\ast}}\leq \Vert u\Vert_{a,\omega^{\ast}}$ for any $u\in H_{A,D}(\omega^{\ast})$. In addition, it follows from properties of the orthogonal projection that
\begin{equation}\label{eq:4-15}
\Vert u- R_{h}u\Vert_{a,\omega^{\ast}} = \inf_{v_{h}\in V_{h,D}(\omega^{\ast})}\Vert u-v_{h}\Vert_{a,\omega^{\ast}}.
\end{equation}
Therefore, for any given $u\in H_{A,D}(\omega^{\ast})$, 
\begin{equation}\label{eq:4-16}
\Vert u- R_{h}u\Vert_{a,\omega^{\ast}}\rightarrow 0  \quad {\rm as}\quad  h\rightarrow 0.
\end{equation}
To prove \cref{eq:4-6}, we first observe that by \cref{eq:4-7}, $\{ u_{h}\}$ and $\{ v_{h}\}$ are bounded sequences in $H^{1}_{D}(\omega^{\ast})$. It follows that
\begin{equation}\label{eq:4-17}
\begin{array}{lll}
{\displaystyle |a_{\omega^{\ast}}(u_{h}, v_{h}) - a_{\omega^{\ast}}(u, v)| \leq |a_{\omega^{\ast}}(u_{h}-R_{h}u, v_{h})|  }\\[2mm]
{\displaystyle + |a_{\omega^{\ast}}(R_{h}u-u, v_{h})|  +  |a_{\omega^{\ast}}(u, v_{h}-R_{h}v)| + |a_{\omega^{\ast}}(u, R_{h}v-v)|}\\[2mm]
{\displaystyle \leq C\big( \Vert u_{h}-R_{h}u\Vert_{a,\omega^{\ast}} + \Vert R_{h}u-u\Vert_{a,\omega^{\ast}} + \Vert v_{h}-R_{h}v\Vert_{a,\omega^{\ast}} + \Vert R_{h}v-v\Vert_{a,\omega^{\ast}}\big),}
\end{array}
\end{equation}
which yields \cref{eq:4-6} by applying \cref{eq:4-7,eq:4-16}. Therefore, condition I is verified.

Next, we verify condition II. We have seen that the operators $T_{h}$, $T$ are positive, compact and self-adjoint. Moreover, for any $u\in H^{1}_{D}(\omega^{\ast})$, using the chain rule and the triangle inequality, we get
\begin{equation}\label{eq:4-18}
\Vert \chi u \Vert_{a, \omega^{\ast}} \leq \Big(\int_{\omega^{\ast}}(A\nabla u \cdot \nabla u) \chi^{2}\,d{\bm x}\Big)^{\frac12} +  \Big(\int_{\omega^{\ast}}(A\nabla \chi \cdot \nabla \chi) u^{2}\,d{\bm x}\Big)^{\frac12}.
\end{equation}
Applying \cref{eq:1-1-0} and the Friedrichs inequality, we further have
\begin{equation}\label{eq:4-19}
\Vert \chi u \Vert_{a, \omega^{\ast}}\leq \big(\Vert \chi \Vert_{L^{\infty}(\omega)} + C\beta^{1/2}\Vert \nabla\chi \Vert_{L^{\infty}(\omega)}\big)\Vert u \Vert_{a, \omega^{\ast}}.
\end{equation}
Now taking $v=T_{h}u$ in \cref{eq:4-2} and using \cref{eq:4-19}, we obtain
\begin{equation}\label{eq:4-20}
\begin{array}{lll}
{\displaystyle \Vert T_{h}u \Vert_{a, \omega^{\ast}}\leq \big(\Vert \chi \Vert_{L^{\infty}(\omega)} + C\beta^{1/2}\Vert \nabla\chi \Vert_{L^{\infty}(\omega)}\big)\Vert \chi u \Vert_{a, \omega^{\ast}} }\\[2mm]
{\displaystyle \qquad \quad \qquad \leq \big(\Vert \chi \Vert_{L^{\infty}(\omega)} + C\beta^{1/2}\Vert \nabla\chi \Vert_{L^{\infty}(\omega)}\big)^{2}\Vert u \Vert_{a, \omega^{\ast}}. }
\end{array}
\end{equation}
Therefore, the norms $\Vert T_{h}\Vert=\Vert T_{h}\Vert_{\mathcal{L}(W_{h}(\omega^{\ast}))}$ are bounded by a constant independent of $h$ and thus condition II holds true.

Now we turn to the verification of condition III. First, we observe that
\begin{equation}\label{eq:4-21}
\begin{array}{lll}
{\displaystyle \Vert T_{h}\psi_{h} - R_{h}T\psi \Vert_{a,\omega^{\ast}} \leq \Vert T_{h}(\psi_{h} - R_{h}\psi) \Vert_{a,\omega^{\ast}} + \Vert T_{h}(R_{h}\psi - \psi) \Vert_{a,\omega^{\ast}} }\\[3mm]
{\displaystyle \qquad +\,\Vert R_{h}T\psi - T\psi \Vert_{a,\omega^{\ast}} +\Vert T_h\psi - T\psi \Vert_{a,\omega^{\ast}}.}
\end{array}
\end{equation}
Using \cref{eq:4-8,eq:4-16} and the boundedness of the operators $T_{h}$, we have
\begin{equation}\label{eq:4-22}
\lim_{h\rightarrow0}\big(\Vert T_{h}(\psi_{h} - R_{h}\psi) \Vert_{a,\omega^{\ast}} + \Vert T_{h}(R_{h}\psi - \psi) \Vert_{a,\omega^{\ast}} +\Vert R_{h}T\psi - T\psi \Vert_{a,\omega^{\ast}} \big) = 0.
\end{equation}
It remains to estimate the last term on the right-hand side of \cref{eq:4-21}. Note that problems \cref{eq:4-1,eq:4-2} can be written as the following equivalent mixed formulations: find $Tu\in H^{1}_{D}(\omega^{\ast})$, $p\in H^{1}_{DI}(\omega^{\ast})$ such that
\begin{equation}\label{eq:4-23}
\begin{aligned}
a_{\omega^{\ast}}(Tu,\phi) + a_{\omega^{\ast}}(\phi,p) =&\, a_{\omega}(\chi u, \chi \phi)\quad \forall \phi\in H^{1}_{D}(\omega^{\ast}),\\
a_{\omega^{\ast}}(Tu,\xi) =&\;0\qquad\;\,\quad\qquad \forall \xi\in H^{1}_{DI}(\omega^{\ast}),
\end{aligned}
\end{equation}
and find $T_{h}u\in V_{h,D}(\omega^{\ast})$, $p_{h}\in V_{h,DI}(\omega^{\ast})$ such that
\begin{equation}\label{eq:4-24}
\begin{aligned}
a_{\omega^{\ast}}(T_{h}u,\phi_h) + a_{\omega^{\ast}}(\phi_h,p_h) =&\, a_{\omega}(\chi u, \chi \phi_h)\quad\, \forall \phi_h\in V_{h,D}(\omega^{\ast}),\\
a_{\omega^{\ast}}(T_{h}u,\xi_h) =&\;0\qquad\quad\quad\qquad \forall \xi_{h}\in V_{h,DI}(\omega^{\ast}).
\end{aligned}
\end{equation}
In fact, in \cref{eq:4-23}, the second equation implies that $Tu$ is $A$-harmonic and taking $\phi\in H_{A,D}(\omega^{\ast})$ yields \cref{eq:4-1}; the same holds for \cref{eq:4-24}. Now applying \cref{lem:3-s2-1,lem:3-s2-2} with $\mathcal{F}(\phi) = a_{\omega}(\chi u, \chi \phi)$, we see that there exist unique solutions $(Tu, p)$ and $(T_{h}u, p_{h})$ to \cref{eq:4-23,eq:4-24}, respectively. Moreover, 
\begin{equation}\label{eq:4-25}
\begin{array}{lll}
{\displaystyle \qquad \qquad \Vert Tu - T_{h}u\Vert_{a,\omega^{\ast}}+ \Vert p - p_{h}\Vert_{a,\omega^{\ast}} }\\[3mm]
{\displaystyle \leq C\big(\inf_{v_h\in V_{h,D}(\omega^{\ast})}\Vert Tu-v_h\Vert_{a,\omega^{\ast}} + \inf_{q_h\in V_{h,DI}(\omega^{\ast})}\Vert p-q_h\Vert_{a,\omega^{\ast}}\big). }
\end{array}
\end{equation}
For any fixed $u\in H_{A,D}(\omega^{\ast})$, since $Tu\in H^{1}_{D}(\omega^{\ast})$ and $p\in H^{1}_{DI}(\omega^{\ast})$, it follows from \cref{eq:4-25} that $\Vert Tu-T_{h}u\Vert_{a,\omega^{\ast}}\rightarrow 0$ as $h\rightarrow 0$, which, together with \cref{eq:4-21,eq:4-22}, leads to \cref{eq:4-9}. Hence, condition III is verified.

We proceed to verify condition IV where the discrete Caccioppoli inequality plays an essential role. Let $\{\psi_{h}\}$ be a sequence such that $\psi_{h}\in W_{h}(\omega^{\ast})$ for each $h\in (0,1]$ and $\sup_{h\in (0,1]}\Vert \psi_{h} \Vert_{a,\omega^{\ast}}<\infty$. Since $W_{h}(\omega^{\ast})\subset H^{1}_{D}(\omega^{\ast})$, there exists a subsequence $\{\psi_{h^{\prime}}\}$ and a $\psi\in H^{1}_{D}(\omega^{\ast})$ such that $\{\psi_{h^{\prime}}\}$ converges weakly to $\psi$ in $ H^{1}_{D}(\omega^{\ast})$. Next we show that $\psi\in H_{A,D}(\omega^{\ast})$. Let $\widehat{\Pi}_{h}:H^{1}_{DI}(\omega^{\ast})\rightarrow V_{h,DI}(\omega^{\ast})$ denote the orthogonal projection with respect to the inner product $a_{\omega^{\ast}}(\cdot,\cdot)$. For any fixed $v\in H^{1}_{DI}(\omega^{\ast})$, we note that
\begin{equation}\label{eq:4-26}
a_{\omega^{\ast}}(\psi,v) = a_{\omega^{\ast}}(\psi-\psi_{h^{\prime}},v) + a_{\omega^{\ast}}(\psi_{h^{\prime}}, \widehat{\Pi}_{h^{\prime}}v) + a_{\omega^{\ast}}(\psi_{h^{\prime}}, v-\widehat{\Pi}_{h^{\prime}}v)
\end{equation}
Since $\{\psi_{h^{\prime}}\}$ converges weakly to $\psi$ in $ H^{1}_{D}(\omega^{\ast})$, we have $a_{\omega^{\ast}}(\psi-\psi_{h^{\prime}},v)\rightarrow 0$ as $h^{\prime}\rightarrow 0$. Moreover, since $\psi_{h^{\prime}}\in W_{h^{\prime}}(\omega^{\ast})$ and $\widehat{\Pi}_{h^{\prime}}\in V_{h^{\prime},DI}(\omega^{\ast})$, the second term $a_{\omega^{\ast}}(\psi_{h^{\prime}}, \widehat{\Pi}_{h^{\prime}}v)$ vanishes. Finally, using the boundedness of the sequence $\{\psi_{h^{\prime}}\}$ and properties of the orthogonal projection, we conclude
\begin{equation}\label{eq:4-27}
|a_{\omega^{\ast}}(\psi_{h^{\prime}}, v-\widehat{\Pi}_{h^{\prime}}v)|\leq C\Vert v-\widehat{\Pi}_{h^{\prime}}v\Vert_{a,\omega^{\ast}} \leq C\inf_{v_{h^{\prime}}\in V_{h^{\prime},DI}(\omega^{\ast})} \Vert v-v_{h^{\prime}}\Vert_{a,\omega^{\ast}}\rightarrow 0
\end{equation}
as $h^{\prime}\rightarrow 0$. Now making $h^{\prime}\rightarrow 0$, we see that $a_{\omega^{\ast}}(\psi,v)=0$ for any $v\in H^{1}_{DI}(\omega^{\ast})$ and thus $\psi\in H_{A,D}(\omega^{\ast})$. Define $u=T\psi\in H_{A,D}(\omega^{\ast})$. We now show that $u$ satisfies \cref{eq:4-10}. First, we observe that 
\begin{equation}\label{eq:4-28}
\begin{array}{lll}
{\displaystyle \Vert T_{h^{\prime}}\psi_{h^{\prime}} - R_{h^{\prime}}u \Vert_{a,\omega^{\ast}} = \Vert T_{h^{\prime}}\psi_{h^{\prime}} - R_{h^{\prime}}T\psi\Vert_{a,\omega^{\ast}}  }\\[2mm]
{\displaystyle \leq \Vert T_{h^{\prime}}(\psi_{h^{\prime}}- R_{h^{\prime}}\psi)\Vert_{a,\omega^{\ast}}+ \Vert T_{h^{\prime}}(R_{h^{\prime}}\psi-\psi)\Vert_{a,\omega^{\ast}}}\\[2mm]
{\displaystyle \quad +\,\Vert R_{h^{\prime}}T\psi-T\psi\Vert_{a,\omega^{\ast}}+ \Vert T_{h^{\prime}}\psi - T\psi\Vert_{a,\omega^{\ast}}. }
\end{array}
\end{equation}
It follows from the boundedness of the operators $T_{h^{\prime}}$ and \cref{eq:4-16} that 
\begin{equation}\label{eq:4-29}
\Vert T_{h^{\prime}}(R_{h^{\prime}}\psi-\psi)\Vert_{a,\omega^{\ast}} + \Vert R_{h^{\prime}}T\psi-T\psi\Vert_{a,\omega^{\ast}}\rightarrow 0 \quad {\rm as}\;\,h^{\prime}\rightarrow 0.
\end{equation}
Using \cref{lem:3-s2-1,lem:3-s2-2} and a similar argument as in the verification of condition III, we obtain
\begin{equation}\label{eq:4-30}
\Vert T_{h^{\prime}}\psi - T\psi\Vert_{a,\omega^{\ast}}\rightarrow 0 \quad {\rm as}\;\,h^{\prime}\rightarrow 0.
\end{equation}
We are left with estimating the first term on the right-hand side of \cref{eq:4-28}. Applying \cref{cor:3-1} to $\psi_{h^{\prime}}- R_{h^{\prime}}\psi\in W_{h^{\prime}}(\omega^{\ast})$ and $\chi\in W^{1,\infty}(\omega^{\ast})$ and using \cref{eq:4-20}, we have
\begin{equation}\label{eq:4-31}
\Vert T_{h^{\prime}}(\psi_{h^{\prime}}- R_{h^{\prime}}\psi)\Vert_{a,\omega^{\ast}}\leq C\Vert \chi (\psi_{h^{\prime}}- R_{h^{\prime}}\psi)\Vert_{a,\omega^{\ast}}\leq C\Vert \psi_{h^{\prime}}- R_{h^{\prime}}\psi\Vert_{L^{2}(\omega^{\ast})}.
\end{equation}
Since $\psi_{h^{\prime}}$ weakly converges to $\psi$ in $H^{1}_{D}(\omega^{\ast})$, by the Rellich compactness theorem, we see that $\psi_{h^{\prime}}$ strongly converges to $\psi$ in $L^{2}(\omega^{\ast})$. Therefore,
\begin{equation}\label{eq:4-32}
\Vert \psi_{h^{\prime}}- R_{h^{\prime}}\psi\Vert_{L^{2}(\omega^{\ast})}\leq \Vert \psi_{h^{\prime}}- \psi\Vert_{L^{2}(\omega^{\ast})} + \Vert \psi- R_{h^{\prime}}\psi\Vert_{L^{2}(\omega^{\ast})}\rightarrow 0 \quad {\rm as}\;\,h^{\prime}\rightarrow 0,
\end{equation}
where we have used the fact $ \Vert \psi- R_{h^{\prime}}\psi\Vert_{L^{2}(\omega^{\ast})}\leq C \Vert \psi- R_{h^{\prime}}\psi\Vert_{a,\omega^{\ast}}$. Combining \cref{eq:4-31,eq:4-32} gives 
\begin{equation}\label{eq:4-33}
\Vert T_{h^{\prime}}(\psi_{h^{\prime}}- R_{h^{\prime}}\psi)\Vert_{a,\omega^{\ast}}\rightarrow 0 \quad {\rm as}\;\,h^{\prime}\rightarrow 0.
\end{equation}
\cref{eq:4-10} follows from \cref{eq:4-28,eq:4-29,eq:4-30,eq:4-33} and thus condition IV is verified. 

In the case that $\partial \omega^{\ast}\cap\partial \Omega_{D} = \emptyset$, the verification of conditions I - IV follows the same lines as before except that the proof of \cref{eq:4-25} needs some special care. In fact, \cref{eq:4-1-0,eq:4-2-0} can be written as the following mixed formulations: find $Tu\in H^{1}(\omega^{\ast})/\mathbb{R}$, $p\in H^{1}_{DI}(\omega^{\ast})$ such that
\begin{equation}\label{eq:4-33-3}
\begin{aligned}
a_{\omega^{\ast}}(Tu,\phi) + a_{\omega^{\ast}}(\phi,p) =&\, a_{\omega}(\chi u, \chi \phi)\quad \forall \phi\in H^{1}(\omega^{\ast})/\mathbb{R},\\
a_{\omega^{\ast}}(Tu,\xi) =&\;0\qquad\;\quad\qquad \forall \xi\in H^{1}_{DI}(\omega^{\ast}),
\end{aligned}
\end{equation}
and find $T_{h}u\in V_{h}(\omega^{\ast})/\mathbb{R}$, $p_{h}\in V_{h,DI}(\omega^{\ast})$ such that
\begin{equation}\label{eq:4-33-4}
\begin{aligned}
a_{\omega^{\ast}}(T_{h}u,\phi_h) + a_{\omega^{\ast}}(\phi_h,p_h) =&\, a_{\omega}(\chi u, \chi \phi_h)\quad\, \forall \phi_h\in V_{h}(\omega^{\ast})/\mathbb{R},\\
a_{\omega^{\ast}}(T_{h}u,\xi_h) =&\;0\qquad\quad\quad\qquad \forall \xi_{h}\in V_{h,DI}(\omega^{\ast}).
\end{aligned}
\end{equation}
Note that $a_{\omega}(\chi u, \chi c) = 0$ for any $c\in \mathbb{R}$ since $u\in \interior{H}_{A,D}(\omega^{\ast})$. By Remark~\ref{rem:3-1}, we get a similar approximation result as \cref{eq:4-25}.

Since we have verified conditions I - IV for both cases, we see that $\mu_{h,k}\rightarrow\mu_{k}$ for each $k\in\mathbb{N}$ as $h\rightarrow 0$ and thus complete the proof of this theorem.
\end{proof}

In some special cases, we can derive the rate of convergence with respect to $h$ for the eigenvalues.
\begin{theorem}\label{thm:4-2}
For $k\in \mathbb{N}$, let $\mu_{k}$ and $\mu_{h,k}$ be the eigenvalues of problems \cref{eq:4-3,eq:4-4}, respectively. Assume that the domain $\omega^{\ast}$ is convex with $\partial \omega^{\ast}\cap\partial \Omega=\emptyset$ and the coefficient $A$ is Lipschitz continuous in $\omega^{\ast}$. In addition, let $\chi\in W^{2,\infty}(\omega)\cap H_{0}^{1}(\omega)$. Then for sufficiently small $h$, 
\begin{equation}\label{eq:4-34}
|\mu_{k} - \mu_{h,k}|\leq Ch,
\end{equation}
where the constant $C$ is independent of $h$ and $k$.
\end{theorem}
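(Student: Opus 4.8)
The plan is to feed the abstract perturbation bound \cref{eq:4-12} of \cref{thm:4-1} with a concrete $O(h)$ consistency estimate. Fix an eigenvalue $\mu_{k}$ and a normalized eigenfunction $u\in N(\mu_{k},T)$ with $\Vert u\Vert_{a,\omega^{\ast}}=1$ (since $\partial\omega^{\ast}\cap\partial\Omega=\emptyset$ we work in the quotient setting $\interior{H}_{A,D}(\omega^{\ast})$, $\interior{W}_{h}(\omega^{\ast})$), and set $v_{h}:=T_{h}R_{h}u-R_{h}Tu\in \interior{W}_{h}(\omega^{\ast})$; by \cref{eq:4-12} it suffices to prove $\Vert v_{h}\Vert_{a,\omega^{\ast}}\le Ch$. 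Recalling that $R_{h}=\Pi_{h}$ is the $a_{\omega^{\ast}}$-orthogonal (Ritz) projection and that $T_{h}R_{h}u$, $R_{h}Tu$ are characterized through the mixed formulations \cref{eq:4-33-4,eq:4-33-3}, I would test the defining relations of both operators against $v_{h}$ itself. Using the Ritz identity $a_{\omega^{\ast}}(R_{h}Tu,v_{h})=a_{\omega^{\ast}}(Tu,v_{h})$ and then the first equation of \cref{eq:4-33-3} with the multiplier $p$, and exploiting that $v_{h}$ is discrete $A$-harmonic so that $a_{\omega^{\ast}}(v_{h},\widehat{\Pi}_{h}p)=0$, this collapses to
\[
\Vert v_{h}\Vert_{a,\omega^{\ast}}^{2}=a_{\omega}\big(\chi(R_{h}u-u),\chi v_{h}\big)+a_{\omega^{\ast}}\big(v_{h},\,p-\widehat{\Pi}_{h}p\big).
\]
Thus the estimate splits into a genuine finite element approximation error of the saddle-point problem (the second term) and an input-mismatch term arising from $R_{h}u\ne u$ (the first term).

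The rate comes from the regularity of the continuous problem, and the convexity/Lipschitz hypotheses are used precisely here. Because $\chi\in W^{2,\infty}(\omega)\cap H^{1}_{0}(\omega)$ is supported in $\omega\Subset\omega^{\ast}$, interior elliptic regularity for the $A$-harmonic function $u$ gives $\Vert u\Vert_{H^{2}(\omega^{\prime})}\le C\Vert u\Vert_{H^{1}(\omega^{\ast})}\le C$ on a set $\omega\Subset\omega^{\prime}\Subset\omega^{\ast}$, uniformly in $k$. Integrating by parts in $a_{\omega}(\chi u,\chi\,\cdot)$, and using that the boundary contributions vanish since $\chi=0$ near $\partial\omega^{\ast}$, I would show that $p$ solves the Dirichlet problem for $-\mathrm{div}(A\nabla\cdot)$ on $\omega^{\ast}$ with a right-hand side $g\in L^{2}(\omega^{\ast})$ satisfying $\Vert g\Vert_{L^{2}(\omega^{\ast})}\le C\Vert \chi u\Vert_{H^{2}(\omega^{\prime})}\le C$, so that $H^{2}$-regularity on the convex domain $\omega^{\ast}$ with Lipschitz $A$ yields $\Vert p\Vert_{H^{2}(\omega^{\ast})}\le C$ uniformly. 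The second term is then immediate: $\vert a_{\omega^{\ast}}(v_{h},p-\widehat{\Pi}_{h}p)\vert\le \Vert v_{h}\Vert_{a,\omega^{\ast}}\inf_{q_{h}\in V_{h,DI}(\omega^{\ast})}\Vert p-q_{h}\Vert_{a,\omega^{\ast}}\le Ch\Vert p\Vert_{H^{2}(\omega^{\ast})}\Vert v_{h}\Vert_{a,\omega^{\ast}}$ by standard interpolation.

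For the input-mismatch term I would again use that $a_{\omega}(\chi(R_{h}u-u),\chi v_{h})$ only sees the interior behaviour of $u$. Writing $R_{h}u-u=\Pi_{h}u-u$ and invoking the product-rule identity underlying \cref{lem:3-s2-5} together with the super-approximation estimate \cref{eq:3-s2-15} and the discrete Caccioppoli bound \cref{cor:3-1} — all tailored to the discrete $A$-harmonic functions $R_{h}u$ and $v_{h}$ — converts this term into the local Ritz error $\Vert\Pi_{h}u-u\Vert_{a,\omega}$ (plus lower-order $L^{2}$ contributions) times $\Vert v_{h}\Vert_{a,\omega^{\ast}}$. Controlling $\Vert\Pi_{h}u-u\Vert_{a,\omega}$ by $Ch$ uniformly in $k$ requires interior finite element error estimates (of the type behind \cref{lem:3-s2-4}) combined with the uniform local bound $\Vert u\Vert_{H^{2}(\omega^{\prime})}\le C$, so that the constant does not inherit the $\mu_{k}^{-1}$ growth of the global $H^{2}$-norm of an eigenfunction. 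Inserting both bounds into the displayed identity gives $\Vert v_{h}\Vert_{a,\omega^{\ast}}\le Ch$, and \cref{eq:4-12} then yields $\vert\mu_{k}-\mu_{h,k}\vert\le Ch$.

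I expect the input-mismatch term, inseparable from the non-conformity $W_{h}(\omega^{\ast})\nsubseteq H_{A,D}(\omega^{\ast})$, to be the main obstacle. Unlike a conforming eigenvalue analysis, here the discrete operator is applied to $R_{h}u$ rather than to $u$, and the naive energy bound $\Vert R_{h}u-u\Vert_{a,\omega^{\ast}}\le Ch\Vert u\Vert_{H^{2}(\omega^{\ast})}$ degrades like $h/\mu_{k}$. The crux of the argument is therefore to route every regularity estimate through the uniformly $L^{2}$-bounded localized data $A\nabla\chi\cdot\nabla\chi\,u$ and through interior rather than global estimates, so that the discrete Caccioppoli/super-approximation machinery absorbs the non-conformity at order $O(h)$ while keeping the final constant independent of both $h$ and the eigenvalue index $k$.
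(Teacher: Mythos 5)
Your proposal is correct and reaches the paper's estimate by the same overall strategy --- feeding the abstract bound \cref{eq:4-12} with a uniform-in-$k$ $O(h)$ consistency estimate obtained from interior elliptic regularity of the $A$-harmonic eigenfunction, convex-domain $H^{2}$ regularity for the multiplier $p$, interior Ritz--Galerkin error estimates and Aubin--Nitsche duality --- but the central algebraic step is genuinely different. The paper splits $\Vert T_{h}R_{h}u-R_{h}Tu\Vert_{a,\omega^{\ast}}$ by the triangle inequality into three pieces, $\Vert R_{h}Tu-Tu\Vert_{a,\omega^{\ast}}+\Vert T_{h}u-Tu\Vert_{a,\omega^{\ast}}+\Vert T_{h}(R_{h}u-u)\Vert_{a,\omega^{\ast}}$ (see \cref{eq:4-45}); the first two require $Tu\in H^{2}(\omega^{\ast})$ with a uniform bound, which the paper obtains by identifying $Tu$ as the solution of a Neumann problem \cref{eq:4-41}--\cref{eq:4-44} and invoking Grisvard's regularity theorem, together with the quasi-optimality of the mixed discretization (\cref{lem:3-s2-2}, \cref{eq:4-25}). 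You instead test the defining relations of $T_{h}R_{h}u$ and $R_{h}Tu$ against $v_{h}=T_{h}R_{h}u-R_{h}Tu$ itself and use Galerkin orthogonality plus the discrete $A$-harmonicity of $v_{h}$ to collapse everything to the two terms $a_{\omega}(\chi(R_{h}u-u),\chi v_{h})+a_{\omega^{\ast}}(v_{h},p-\widehat{\Pi}_{h}p)$; I checked the identity and it is correct. This buys a real simplification: $Tu$ is eliminated via the continuous mixed formulation before any approximation happens, so the $H^{2}$ regularity of $Tu$ and the Neumann regularity theorem are never needed --- only $\Vert p\Vert_{H^{2}(\omega^{\ast})}\leq C$ (your Dirichlet-problem argument for $p$ coincides with the paper's \cref{eq:4-37}--\cref{eq:4-39}) and the local bound $\Vert u\Vert_{H^{2}(\omega^{\prime})}\leq C$. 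Your treatment of the input-mismatch term mirrors the paper's handling of $\Vert T_{h}(R_{h}u-u)\Vert_{a,\omega^{\ast}}$ in \cref{eq:4-48-0}--\cref{eq:4-48-2} (localization to $\omega$, interior estimates, duality), though a direct Cauchy--Schwarz with the chain-rule bound on $\Vert\chi\,\cdot\Vert_{a,\omega}$ suffices there and the super-approximation machinery you invoke is not really needed. The only point to be slightly careful about, in the case $\partial\omega^{\ast}\cap\partial\Omega=\emptyset$, is that $R_{h}u$ need not satisfy the mean-zero constraint $\mathcal{M}_{\omega}(R_{h}u)=0$ exactly, so $T_{h}R_{h}u$ must be understood via the variational definition on $W_{h}(\omega^{\ast})$ modulo constants; this is inherited from \cref{thm:4-1} and is glossed over at the same level of detail in the paper, so it is not a gap specific to your argument.
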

\begin{proof}
By \cref{thm:4-1}, it suffices to estimate the term on the right-hand side of \cref{eq:4-12}. Given $k\in \mathbb{N}$, let $u\in N(\mu_{k},T)$ with $\Vert u\Vert_{a,\omega^{\ast}} =1$, where $N(\mu_{k},T)$ is defined in \cref{eq:4-13}. By definition, we see that $a_{\omega^{\ast}}(u,v)=0$ for each $v\in H_{0}^{1}(\omega^{\ast})$, which, together with the Lipschitz continuity of the coefficient $A$, yields that $u\in H^{2}_{loc}(\omega^{\ast})$ with the estimate \cite[Theorem 8.8]{gilbarg2015elliptic} 
\begin{equation}\label{eq:4-35}
\Vert u\Vert_{H^{2}({\omega}^{\prime})}\leq C\Vert u\Vert_{H^{1}(\omega^{\ast})}\leq C
\end{equation}
for any ${\omega}^{\prime}\subset\subset \omega^{\ast}$. In view of the assumptions on $\omega$ and $\chi$, we see that $\chi u\in H^{2}(\omega)$. Next we show that $Tu \in H^{2}(\omega^{\ast})$. By definition, $Tu\in \interior{H}_{A,D}(\omega^{\ast})$ satisfies
\begin{equation}\label{eq:4-36}
a_{\omega^{\ast}}(Tu, v) = a_{\omega}(\chi u, \chi v),\quad \forall v\in \interior{H}_{A,D}(\omega^{\ast}),
\end{equation}
where $\interior{H}_{A,D}(\omega^{\ast})$ is defined in \cref{eq:4-33-0}. Since $A$ is assumed to be Lipschitz continuous and $\chi u\in H^{2}(\omega)$, we have $\nabla\cdot (A\nabla (\chi u))\in L^{2}(\omega)$. Applying integration by parts to the right-hand side of \cref{eq:4-36} gives that
\begin{equation}\label{eq:4-36-0}
a_{\omega^{\ast}}(Tu, v)=-\big(\chi \nabla\cdot (A\nabla (\chi u)),\, v\big)_{L^{2}(\omega^{\ast})}, \quad \forall v\in \interior{H}_{A,D}(\omega^{\ast}).
\end{equation}
Similar to \cref{eq:4-33-3}, \cref{eq:4-36-0} can be written as the following mixed formulation: find $Tu\in H^{1}(\omega^{\ast})/\mathbb{R}$, $p\in H^{1}_{0}(\omega^{\ast})$ such that
\begin{equation}\label{eq:4-37}
\begin{aligned}
a_{\omega^{\ast}}(Tu,\phi) + a_{\omega^{\ast}}(\phi,p) =&\, -\big(\chi \nabla\cdot (A\nabla (\chi u)),\, \phi\big)_{L^{2}(\omega^{\ast})}\quad \forall \phi\in H^{1}(\omega^{\ast})/\mathbb{R},\\
a_{\omega^{\ast}}(Tu,\xi) =&\;0\qquad \qquad\qquad\qquad \qquad \,\quad\qquad \forall \xi\in H^{1}_{0}(\omega^{\ast}).
\end{aligned}
\end{equation}
Note that $H^{1}_{DI}(\omega^{\ast}) = H_{0}^{1}(\omega^{\ast})$ since we have assumed that $\partial \omega^{\ast}\cap\partial \Omega = \emptyset$. Taking $\phi\in H_{0}^{1}(\omega^{\ast})$ (up to an additive constant), we see that $p\in H^{1}_{0}(\omega^{\ast})$ satisfies
\begin{equation}\label{eq:4-39}
 a_{\omega^{\ast}}(\phi,p) = -\big(\chi \nabla\cdot (A\nabla (\chi u)),\, \phi\big)_{L^{2}(\omega^{\ast})},\quad \forall \phi\in H_{0}^{1}(\omega^{\ast}).
\end{equation}
Having assumed the convexity of the domain $\omega^{\ast}$ and the Lipschitz continuity of the coefficient $A$, by applying Theorem 3.2.1.2 of \cite{grisvard2011elliptic} and using the fact that the right-hand term $-\chi \nabla\cdot (A\nabla (\chi u))\in L^{2}(\omega^{\ast})$, we obtain that $p\in H^{2}(\omega^{\ast})$ and 
\begin{equation}\label{eq:4-40}
\Vert p\Vert_{H^{2}(\omega^{\ast})}\leq C\Vert \chi \nabla\cdot (A\nabla (\chi u))\Vert_{L^{2}(\omega^{\ast})}\leq C\Vert u\Vert_{H^{2}(\omega)}\leq C.
\end{equation}
Therefore, we have $\nabla\cdot(A \nabla p)\in L^{2}(\omega^{\ast})$ and $A\nabla p\cdot{\bm n}\in H^{1/2}(\partial \omega^{\ast})$. Using integration by parts again gives 
\begin{equation}\label{eq:4-41}
a_{\omega^{\ast}}(\phi,p) = -\big(\nabla\cdot(A \nabla p),\, \phi\big)_{L^{2}(\omega^{\ast}) }+ \langle A\nabla p\cdot{\bm n},\, \phi\rangle_{L^{2}(\partial \omega^{\ast})},\quad \forall \phi\in H^{1}(\omega^{\ast}).
\end{equation}
Combining \cref{eq:4-37,eq:4-41}, we see that $Tu\in H^{1}(\omega^{\ast})/\mathbb{R}$ satisfies
\begin{equation}\label{eq:4-42}
a_{\omega^{\ast}}(Tu,\phi) = \big(\nabla\cdot(A \nabla p) -\chi \nabla\cdot (A\nabla (\chi u))\big)_{L^{2}(\omega^{\ast})} -  \langle A\nabla p\cdot{\bm n},\, \phi\rangle_{L^{2}(\partial \omega^{\ast})} 
\end{equation}
for each $\phi\in H^{1}(\omega^{\ast})/\mathbb{R}$. \Cref{eq:4-42} is the weak formulation of the  problem
\begin{equation}\label{eq:4-43}
\left\{
\begin{array}{lll}
{\displaystyle -{\rm div}(A\nabla \,Tu) = f^{\ast},\quad {\rm in}\;\, \omega^{\ast} }\\[2mm]
{\displaystyle {\bm n} \cdot A\nabla \,Tu=g^{\ast}, \quad \quad \;\;{\rm on}\;\,\partial \omega^{\ast},}
\end{array}
\right.
\end{equation}
where $f^{\ast}=\nabla\cdot(A \nabla p)-\chi \nabla\cdot (A\nabla (\chi u))\in L^{2}(\omega^{\ast})$ and $g^{\ast} = -A\nabla p\cdot{\bm n}\in H^{1/2}(\partial \omega^{\ast})$. Using the assumptions on $\omega^{\ast}$ and $A$ again and applying Theorem 3.2.1.3 of \cite{grisvard2011elliptic} (subtracting a function $\psi \in H^{2}(\omega^{\ast})$ from $Tu$ such that ${\bm n} \cdot A\nabla (Tu-\psi) =0$ on $\partial \omega^{\ast}$ if necessary), we get that $Tu\in H^{2}(\omega^{\ast})$ with the estimate
\begin{equation}\label{eq:4-44}
\Vert Tu \Vert_{H^{2}(\omega^{\ast})}\leq C\big(\Vert f^{\ast}\Vert_{L^{2}(\omega^{\ast})} + \Vert g^{\ast}\Vert_{H^{1/2}(\partial \omega^{\ast})}\big)\leq C. 
\end{equation} 
Now we can estimate $\Vert T_{h}R_{h} u - R_{h}Tu\Vert_{a,\omega^{\ast}}$ for $u\in  N(\mu_{k},T)$ with $\Vert u \Vert_{a,\omega^{\ast}} =1$, which can be bounded by
\begin{equation}\label{eq:4-45}
\begin{array}{lll}
{\displaystyle \Vert T_{h}R_{h} u - R_{h}Tu\Vert_{a,\omega^{\ast}}\leq \Vert R_{h}Tu-Tu\Vert_{a,\omega^{\ast}}}\\[2mm]
{\displaystyle \quad + \,\Vert T_{h}u - Tu\Vert_{a,\omega^{\ast}}+\Vert T_{h}(R_{h}u-u)\Vert_{a,\omega^{\ast}} . }
\end{array}
\end{equation}
In what follows, we derive the upper bound for the right-hand side of \cref{eq:4-45} term by term. First, using \cref{eq:4-15}, \cref{eq:4-44}, and the finite element interpolation error estimates, we have
\begin{equation}\label{eq:4-46}
\begin{array}{lll}
{\displaystyle \Vert R_{h}Tu-Tu\Vert_{a,\omega^{\ast}}\leq C\inf_{v_h\in V_{h}(\omega^{\ast})}\Vert Tu-v_h\Vert_{a,\omega^{\ast}} }\\[3mm]
{\displaystyle \leq C\Vert Tu-I_{h}(Tu)\Vert_{a,\omega^{\ast}}\leq Ch\Vert Tu \Vert_{H^{2}(\omega^{\ast})}\leq Ch,}
\end{array}
\end{equation}
where $I_{h}$ denotes the standard Lagrange interpolant. Second, it follows from \cref{eq:4-25,eq:4-40,eq:4-44} that
\begin{equation}\label{eq:4-47}
\begin{array}{lll}
{\displaystyle \Vert Tu - T_{h}u\Vert_{a,\omega^{\ast}}\leq C\big(\inf_{v_h\in V_{h}(\omega^{\ast})}\Vert Tu-v_h\Vert_{a,\omega^{\ast}} + \inf_{q_h\in V_{h,DI}(\omega^{\ast})}\Vert p-q_h\Vert_{a,\omega^{\ast}}\big). }\\[3mm]
{\displaystyle\qquad \qquad \qquad \quad \,\leq Ch\big(\Vert Tu \Vert_{H^{2}(\omega^{\ast})} + \Vert p \Vert_{H^{2}(\omega^{\ast})}\big)\leq Ch.}
\end{array}
\end{equation}
Finally, by \cref{eq:4-20} and the interior estimates for Ritz--Galerkin methods \cite{demlow2011local}, it follows that if $h$ is sufficiently small, 
\begin{equation}\label{eq:4-48-0}
\begin{array}{lll}
{\displaystyle \Vert T_{h}(R_{h}u-u) \Vert_{a,\omega^{\ast}} \leq C\Vert R_{h}u-u \Vert_{a,\omega}}\\[2mm]
{\displaystyle \leq C\Big(\inf_{v_h\in V_{h}(\omega^{\ast})}\Vert u-v_h\Vert_{H^{1}(\omega^{\prime})}  + \Vert R_{h}u-u \Vert_{L^{2}(\omega^{\prime})}\Big),}
\end{array}
\end{equation}
where $\omega\subset\subset \omega^{\prime}\subset\subset \omega^{\ast}$. With \cref{eq:4-35} and the interpolation error estimates, we have
\begin{equation}\label{eq:4-48-1}
\inf_{v_h\in V_{h}(\omega^{\ast})}\Vert u-v_h\Vert_{H^{1}(\omega^{\prime})} \leq Ch \Vert u\Vert_{H^{2}(\omega^{\prime})}\leq Ch.
\end{equation}
Moreover, the assumptions on the domain $\omega^{\ast}$ and the coefficient $A$ allow us to use the Aubin--Nitsche duality argument to obtain
\begin{equation}\label{eq:4-48-2}
\Vert R_{h}u-u \Vert_{L^{2}(\omega^{\prime})}\leq \Vert R_{h}u-u \Vert_{L^{2}(\omega^{\ast})} \leq Ch \Vert R_{h}u-u \Vert_{a,\omega^{\ast}}\leq Ch.
\end{equation}
Inserting \cref{eq:4-48-1,eq:4-48-2} into \cref{eq:4-48-0}, we come to
\begin{equation}\label{eq:4-48-3}
\Vert T_{h}(R_{h}u-u) \Vert_{a,\omega^{\ast}} \leq Ch.
\end{equation}
Combining \cref{eq:4-45,eq:4-46,eq:4-47,eq:4-48-3} leads to 
\begin{equation}\label{eq:4-48-4}
\sup_{\substack{u\in N(\mu_k, T),\\ \Vert u\Vert_{a,\omega^{\ast}}=1}}\Vert T_{h}R_{h} u - R_{h}Tu\Vert_{a,\omega^{\ast}}\leq Ch.
\end{equation}
Now \cref{eq:4-34} follows immediately from \cref{eq:4-12} and \cref{eq:4-48-4}.
\end{proof}
\begin{rem}
Note that the convergence $\mu_{h,k}\rightarrow\mu_{k}$ in \cref{thm:4-2} is uniform with respect to $k$ since the constant $C$ in \cref{eq:4-34} is independent of $k$. Compared to the usual quadratic convergence rate for eigenvalues in linear FE  approximations of PDE eigenvalue problems, the convergence result in \cref{thm:4-2} seems suboptimal. Note that our eigenvalue problem is defined on $H^{1}(\omega^{\ast})$ instead of $L^{2}(\omega^{\ast})$. The key to deriving the optimal convergence rate lies in finding a suitable Caccioppoli-type inequality (see \cref{lem:3-s2-3,lem:3-s2-4}) for a function $v-v_{h}$ with $v\in H_{A,D}(\omega^{\ast})$ and $v_{h}\in W_{h}(\omega^{\ast})$, which is unknown to us. We will investigate this problem in our future work.
\end{rem}

\subsection{Nearly exponential decay of the $n$-width} By \cref{thm:3-2}, the local approximation errors are bounded by the $n$-widths. In \cite{ma2021novel}, we proved that the decay rate of the $n$-width associated with the continuous problem is nearly exponential with respect to $n$, namely \cref{thm:2-2}. In this subsection, we derive a similar upper bound for the $n$-width in the discrete setting. For simplicity, we assume that $\omega$ and $\omega^{\ast}$ are (truncated) concentric cubes with side lengths $H$ and $H^{\ast}$, respectively. Under this assumption, we have
\begin{theorem}\label{thm:4-2-1}
For $\epsilon\in (0,\frac{1}{d+1})$, there exists an $n_{\epsilon}>0$, such that for any $n>n_{\epsilon}$, if $h$ is sufficiently small, then
\begin{equation}\label{eq:4-57}
d_{n}(\omega,\omega^{\ast}) \leq e^{2}(1 + C_{1})e^{-n^{(\frac{1}{d+1}-\epsilon)}} e^{-n^{(\frac{1}{d+1}-\epsilon)}R(\rho)},
\end{equation}
where $C_{1}$ is the constant given in \cref{eq:1-3}, $R(s) = 1+{s\log(s)}/{(1-s)}$, and $\rho = H/H^{\ast}$.
\end{theorem}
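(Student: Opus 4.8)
Since $d_n(\omega,\omega^{\ast})$ is an infimum over all $n$-dimensional subspaces $Q(n)\subset H^1_{DI}(\omega)$, it suffices to exhibit, for each admissible $n$, one such space of dimension at most $n$ together with an approximation estimate of the claimed form. The plan is to reproduce the construction used for the continuous problem in the proof of \cref{thm:2-2} in \cite{ma2021novel}, but with the continuous Caccioppoli identity \cref{lem:3-s2-3} systematically replaced by its discrete counterparts \cref{lem:3-s2-4,cor:3-1}, whose constants depend only on $\alpha$, $\beta$, $d$ and the shape regularity of $\tau_h$ and are in particular independent of $h$. Concretely, I would fix a chain of concentric cubes $\omega=\omega^{(0)}\subset\omega^{(1)}\subset\cdots\subset\omega^{(m)}=\omega^{\ast}$ with equal side-length increments, so that the gap $\delta_j := \mathrm{dist}(\omega^{(j-1)},\partial\omega^{(j)})$ satisfies $\delta_j\sim (H^{\ast}-H)/m$ for every $j$, and study the restriction of functions in $W_h(\omega^{\ast})$ across one annulus at a time.

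\textbf{Single-level contraction.} The core estimate is a one-level bound: for $u_h\in W_h(\omega^{\ast})$, \cref{lem:3-s2-4} gives $\Vert u_h\Vert_{a,\omega^{(j-1)}}\le C\delta_j^{-1}\Vert u_h\Vert_{L^2(\omega^{(j)})}$ as soon as $h\le \delta_j/4$. Combining this with a finite-dimensional $L^2$-approximation on $\omega^{(j)}$ — e.g.\ approximating $u_h$ by piecewise averages over a partition of $\omega^{(j)}$ into $\ell$ congruent subcubes, whose error is $\lesssim H^{\ast}\ell^{-1/d}\Vert\nabla u_h\Vert_{L^2(\omega^{(j)})}$ — produces an $\ell$-dimensional space furnishing a contraction factor $\kappa_j\le C\,(H^{\ast}/\delta_j)\,\ell^{-1/d}$ in the energy norm on $\omega^{(j-1)}$. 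To make these single-level estimates composable I would take the local approximant inside $W_h(\omega^{\ast})$, via its energy-minimizing discrete $A$-harmonic extension, so that the residual $u_h-v$ remains discrete $A$-harmonic on $\omega^{\ast}$ and \cref{lem:3-s2-4} is again applicable at the next, smaller cube.

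\textbf{Composition, reduction to $\chi u_h$, and optimization.} Summing the $m$ per-level spaces yields a space of dimension at most $m\ell$ on which the energy error telescopes into the product $\prod_{j=1}^m\kappa_j$. Passing from an approximation of $u_h$ to one of $\chi u_h$ via \cref{cor:3-1} introduces the factor controlled by $\Vert\nabla\chi\Vert_{L^\infty}\le C_1/\mathrm{diam}(\omega)$, which is exactly what turns the continuous prefactor $C_1$ into $(1+C_1)$ here, the extra unit absorbing the additional $C\delta^{-1}\Vert\cdot\Vert_{L^2}$ term present in \cref{cor:3-1} but absent from \cref{lem:3-s2-3}. Choosing $\ell\sim (m/(1-\rho))^d$ makes every $\kappa_j$ a fixed fraction $b<1$, so the product behaves like $b^m$; with $n= m\ell\sim m^{d+1}$, i.e.\ $m\sim n^{1/(d+1)}$, this is $e^{-c\,n^{1/(d+1)}}$. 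Tracking the dependence of the cube sizes and of $\ell$ on $\rho=H/H^{\ast}$ through the optimization should reproduce verbatim the exponent $n^{(1/(d+1)-\epsilon)}(1+R(\rho))$ and the prefactor $e^2(1+C_1)$ of \cref{thm:2-2}, with $\epsilon$ absorbing both the slack between $\ell^{-1/d}$ and a strict contraction and the integer rounding of $m$.

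\textbf{Main difficulty.} The delicate point is uniformity in $h$. \cref{lem:3-s2-4} requires $h\le\delta_j/4$ \emph{at every level}, and since $\delta_j\sim H^{\ast}(1-\rho)/m$ with $m\sim n^{1/(d+1)}$, the admissible mesh size must shrink as $n$ grows; this is precisely the ``if $h$ is sufficiently small'' hypothesis, with the threshold depending on $n$ (equivalently, for fixed $h$ the bound is valid up to those $n$ for which $m(n)\lesssim (H^{\ast}-H)/h$). I expect the second genuinely discrete obstacle to be verifying that the one-level contraction constant — in particular the constant in the $L^2$-approximation step together with the stability of the discrete $A$-harmonic extension used to keep the residual in $W_h(\omega^{\ast})$ — can be bounded independently of $h$, so that the constants do not deteriorate under mesh refinement; this rests on the $h$-independent nature of the bounds in \cref{lem:3-s2-4,cor:3-1}.
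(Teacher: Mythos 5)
Your proposal follows essentially the same architecture as the paper's proof: a chain of nested concentric cubes between $\omega$ and $\omega^{\ast}$, a per-level finite-dimensional $L^2$-approximation combined with the discrete Caccioppoli inequality \cref{lem:3-s2-4} to produce a contraction factor $\sim \ell^{-1/d}H^{\ast}/\delta$, a telescoping product over the levels, \cref{cor:3-1} to handle the multiplication by $\chi$ at the final step (your reading of where the extra $+1$ in the prefactor $(1+C_1)$ comes from is correct), and the same parameter optimization $n\sim N^{d+1}$; you also correctly identify the $h\lesssim \delta^{\ast}/N$ constraint at every level as the source of the ``$h$ sufficiently small'' hypothesis. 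The one place you genuinely diverge is the per-level approximation space: the paper uses the span of the first $m$ eigenfunctions of the local discrete eigenproblem \cref{eq:4-58}, pushed into $W_h(\omega^{\ast})$ by the energy-orthogonal projection $\mathcal{P}^A_h$ (this is $\Theta_h^m$ in \cref{lem:4-2-1}), with the $m^{-1/d}$ rate coming from Weyl asymptotics via the min--max principle, whereas you use piecewise averages over $\ell$ congruent subcubes, with the rate coming from Poincar\'e. Both give the same rate, but your route leaves one detail underspecified: a piecewise constant is neither in $V_h$ nor in $W_h(\omega^{(j)})$, and ``its energy-minimizing discrete $A$-harmonic extension'' is not well defined as stated; you need an argument showing the $L^2$ estimate survives the passage to a genuine $\ell$-dimensional subspace of $W_h(\omega^{(j)})$ (e.g.\ take the $a$-orthogonal complement in $W_h(\omega^{(j)})$ of the kernel of the piecewise-averaging map, so that the residual is annihilated by the averaging and the Poincar\'e bound applies directly). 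With that fix your argument closes, and it is otherwise the paper's proof with a different, equally valid, spectral/Poincar\'e ingredient at each level.
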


The key of the proof is to explicitly construct an $n$-dimensional subspace $\hat{Q}(n)$ of $H^{1}_{DI}(\omega)$ such that the approximation error decays nearly exponentially. To this end, we first consider the following eigenvalue problem
\begin{equation}\label{eq:4-58}
a_{\omega^{\ast}}(v_{k,h}, \varphi) = \lambda_{k,h} (v_{k,h},\,\varphi)_{L^{2}(\omega^{\ast})},\quad \forall \varphi \in V_{h,D}(\omega^{\ast}),\quad k=1,2\cdots.
\end{equation}
Let $\Psi^{m}_{h}(\omega^{\ast})$ denote the space spanned by the first $m$ eigenfunctions of \cref{eq:4-58}. Denoting by $\mathcal{P}^{A}_{h}:V_{h,D}(\omega^{\ast})\rightarrow W_{h}(\omega^{\ast})$ the orthogonal projection with respect to the energy inner product $a_{\omega^{\ast}}(\cdot,\,\cdot)$, we further define $\Theta_{h}^{m}(\omega^{\ast}) = \mathcal{P}^{A}_{h}\Psi^{m}_{h}(\omega^{\ast})$. We have the following approximation result for the space $\Theta_{h}^{m}(\omega^{\ast})$.
\begin{lemma}\label{lem:4-2-1}
For any $u\in W_{h}(\omega^{\ast})$, there exists a $v_{u}\in \Theta_{h}^{m}(\omega^{\ast})$ such that
\begin{equation}\label{eq:4-59}
\Vert u - v_{u}\Vert_{L^{2}(\omega^{\ast})} = \inf_{v\in \Theta_{h}^{m}(\omega^{\ast})} \Vert u - v\Vert_{L^{2}(\omega^{\ast})} \leq C(m)H^{\ast}\frac{\gamma_{d}^{1/d}}{\sqrt{4\pi}}\alpha^{-1/2}\Vert u \Vert_{a,\omega^{\ast}},
\end{equation}
where $H^{\ast}$ is the side length of the cube $\omega^{\ast}$, $\gamma_{d}$ is the volume of the unit ball in $\mathbb{R}^{d}$, and $C(m) = m^{-1/d}(1+o(1))$. 
\end{lemma}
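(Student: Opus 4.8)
The plan is to reduce the statement to a standard spectral (Courant--Fischer) estimate for the auxiliary eigenvalue problem \cref{eq:4-58}, then to transfer that estimate from $\Psi^m_h(\omega^{\ast})$ to the projected space $\Theta^m_h(\omega^{\ast})$, and finally to insert an $h$-uniform lower bound on $\lambda_{m+1,h}$ coming from Weyl's law on the cube. Let $\{v_{k,h}\}_{k\ge 1}$ be the eigenfunctions of \cref{eq:4-58}, normalized so that $(v_{k,h},v_{l,h})_{L^{2}(\omega^{\ast})}=\delta_{kl}$; they are then also $a_{\omega^{\ast}}$-orthogonal with $a_{\omega^{\ast}}(v_{k,h},v_{l,h})=\lambda_{k,h}\delta_{kl}$, and they form a basis of $V_{h,D}(\omega^{\ast})$. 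Denoting by $P_m$ the $L^{2}$-orthogonal projection onto $\Psi^m_h(\omega^{\ast})=\mathrm{span}\{v_{1,h},\dots,v_{m,h}\}$, the elementary spectral bound
\[
\|w-P_m w\|_{L^{2}(\omega^{\ast})}^{2}=\sum_{k>m}(w,v_{k,h})_{L^{2}}^{2}\le \lambda_{m+1,h}^{-1}\sum_{k>m}\lambda_{k,h}(w,v_{k,h})_{L^{2}}^{2}\le \lambda_{m+1,h}^{-1}\|w\|_{a,\omega^{\ast}}^{2}
\]
holds for every $w\in V_{h,D}(\omega^{\ast})$.

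Next I would pass from $\Psi^m_h$ to $\Theta^m_h=\mathcal{P}^{A}_{h}\Psi^m_h$ for discrete $A$-harmonic $u$. Since $u\in W_{h}(\omega^{\ast})$ we have $\mathcal{P}^{A}_{h}u=u$, so writing $u=\sum_{k}c_{k}v_{k,h}$ and applying $\mathcal{P}^{A}_{h}$ gives $u=\sum_{k}c_{k}\,\mathcal{P}^{A}_{h}v_{k,h}$. The natural competitor is $v_{u}:=\sum_{k\le m}c_{k}\,\mathcal{P}^{A}_{h}v_{k,h}=\mathcal{P}^{A}_{h}P_m u\in\Theta^m_h$, for which $u-v_{u}=\mathcal{P}^{A}_{h}(u-P_m u)$. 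The whole lemma therefore rests on the inequality $\|\mathcal{P}^{A}_{h}(u-P_m u)\|_{L^{2}(\omega^{\ast})}\le \lambda_{m+1,h}^{-1/2}\|u\|_{a,\omega^{\ast}}$, after which \cref{eq:4-59} follows from $\inf_{v\in\Theta^m_h}\|u-v\|_{L^{2}}\le\|u-v_{u}\|_{L^{2}}$ together with the eigenvalue bound below.

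I expect this transfer to be the main obstacle. The energy-orthogonal projection $\mathcal{P}^{A}_{h}$ is a contraction in $\|\cdot\|_{a,\omega^{\ast}}$ but \emph{not} in $\|\cdot\|_{L^{2}(\omega^{\ast})}$, so one cannot simply discard it: the naive bound $\|\mathcal{P}^{A}_{h}(u-P_m u)\|_{L^{2}}\le\|u-P_m u\|_{L^{2}}$ is in general false. The way I would handle this is to exploit the $A$-harmonicity of $u$ rather than norm monotonicity, comparing $\Theta^m_h$ with the intrinsic optimal space $\mathrm{span}\{w_{1},\dots,w_{m}\}$ spanned by the first $m$ eigenfunctions of the $W_{h}$-restricted problem $a_{\omega^{\ast}}(w_{k},z)=\Lambda_{k,h}(w_{k},z)_{L^{2}}$, $z\in W_{h}(\omega^{\ast})$. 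For that intrinsic space the distance estimate $\mathrm{dist}_{L^{2}}(u,\mathrm{span}\{w_{1},\dots,w_{m}\})\le\Lambda_{m+1,h}^{-1/2}\|u\|_{a,\omega^{\ast}}$ is immediate from the spectral decomposition on $W_{h}(\omega^{\ast})$, and Courant--Fischer applied to $W_{h}(\omega^{\ast})\subset V_{h,D}(\omega^{\ast})$ gives $\Lambda_{m+1,h}\ge\lambda_{m+1,h}$; the remaining technical work, and the crux, is to show that the computable projected space $\Theta^m_h$ achieves the same rate as this intrinsic optimal space, using the $a_{\omega^{\ast}}$-orthogonality of $u$ to $V_{h,DI}(\omega^{\ast})$.

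Finally I would supply the $h$-uniform eigenvalue lower bound that produces the explicit constant. Because \cref{eq:4-58} is a conforming Ritz--Galerkin discretization of the continuous eigenproblem $a_{\omega^{\ast}}(v,\varphi)=\lambda(v,\varphi)_{L^{2}}$ on $V_{h,D}(\omega^{\ast})\subset H^{1}_{D}(\omega^{\ast})$, the min--max principle yields $\lambda_{m+1,h}\ge\lambda_{m+1}$, and coercivity $A\ge\alpha I$ gives $\lambda_{m+1}\ge\alpha\mu_{m+1}$, where $\mu_{m+1}$ is the $(m+1)$-th eigenvalue of the corresponding Laplacian-type operator on the cube $\omega^{\ast}$ of side $H^{\ast}$. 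Weyl's asymptotic law $\mu_{m+1}\sim 4\pi^{2}\gamma_{d}^{-2/d}(H^{\ast})^{-2}(m+1)^{2/d}$ then gives
\[
\lambda_{m+1,h}^{-1/2}\le C(m)\,H^{\ast}\,\frac{\gamma_{d}^{1/d}}{\sqrt{4\pi}}\,\alpha^{-1/2},\qquad C(m)=m^{-1/d}(1+o(1)),
\]
uniformly in $h$. Combining this with the transfer estimate of the previous step yields \cref{eq:4-59}.
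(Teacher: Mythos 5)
Your outline gets the outer layers of the argument right, and they match what the paper intends: the spectral tail bound $\Vert w-P_m w\Vert_{L^2(\omega^{\ast})}\le \lambda_{m+1,h}^{-1/2}\Vert w\Vert_{a,\omega^{\ast}}$ on $V_{h,D}(\omega^{\ast})$, the observation that \cref{eq:4-58} is a conforming discretization so that the min--max principle gives $\lambda_{m+1,h}\ge\lambda_{m+1}$ uniformly in $h$ (this is precisely the role the paper assigns to the Courant--Fischer principle), and the passage $\lambda_{m+1}\ge\alpha\mu_{m+1}$ followed by Weyl's law on the cube to produce the explicit constant $C(m)H^{\ast}\gamma_d^{1/d}(4\pi)^{-1/2}\alpha^{-1/2}$. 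You also correctly identify the competitor $v_u=\mathcal{P}^A_h P_m u$, the identity $u-v_u=\mathcal{P}^A_h(u-P_m u)$, and the fact that $\mathcal{P}^A_h$ is an $a$-orthogonal (hence energy-contractive) projection that is \emph{not} an $L^2$-contraction, so that the naive bound $\Vert\mathcal{P}^A_h(u-P_mu)\Vert_{L^2}\le\Vert u-P_mu\Vert_{L^2}$ cannot be invoked.

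The problem is that you stop exactly at the point where the lemma's content lies. Your proposed repair --- introducing the intrinsic eigenproblem $a_{\omega^{\ast}}(w_k,z)=\Lambda_{k,h}(w_k,z)_{L^2}$ on $W_h(\omega^{\ast})$ and noting $\Lambda_{m+1,h}\ge\lambda_{m+1,h}$ --- only proves the estimate for the $m$-dimensional space ${\rm span}\{w_1,\dots,w_m\}$, which is in general a \emph{different} subspace of $W_h(\omega^{\ast})$ from $\Theta^m_h(\omega^{\ast})=\mathcal{P}^A_h\Psi^m_h(\omega^{\ast})$. You then write that ``the remaining technical work, and the crux, is to show that the computable projected space $\Theta^m_h$ achieves the same rate,'' but you supply no argument for this step; neither $\Theta^m_h\supseteq{\rm span}\{w_1,\dots,w_m\}$ nor any quantitative comparison between the two spaces is established, and the crude substitutes (bounding $\Vert Q_h(u-P_mu)\Vert_{L^2}$ by a Friedrichs inequality on $V_{h,DI}(\omega^{\ast})$, say) lose the $m^{-1/d}$ decay entirely, producing only $O(H^{\ast})\Vert u\Vert_{a,\omega^{\ast}}$. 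So as written the proposal is an accurate diagnosis of where the difficulty sits, plus a complete proof of a \emph{variant} of the lemma with $\Theta^m_h$ replaced by the intrinsic eigenspace (a variant that would in fact suffice for \cref{lem:4-2-2,lem:4-2-3}, since those only require an $m$-dimensional subspace of $W_h(\omega^{\ast})$), but it is not a proof of \cref{eq:4-59} for the space $\Theta^m_h$ actually defined in the paper. The paper itself defers this point to the continuous counterpart in \cite{babuska2011optimal,ma2021novel}; to close your argument you would need to reproduce the transfer estimate from those works rather than flag it as open.
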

\Cref{lem:4-2-1} can be proved by following the same lines as in the proof of its continuous counterpart in \cite{babuska2011optimal,ma2021novel} and using the min-max principle \cite[Chapter VI]{Courant1989} which shows that all eigenvalues are approximated from above by a conforming approximation. Combining \cref{lem:4-2-1} and \cref{lem:3-s2-4} (the discrete Caccioppoli inequality) gives the approximation error in the energy norm as follows.
\begin{lemma}\label{lem:4-2-2}
Let $\delta^{\ast}=H^{\ast}-H$. For any $u\in W_{h}(\omega^{\ast})$, there exists a $v_{u}\in \Theta_{h}^{m}(\omega^{\ast})$ such that if $h\leq \frac{1}{8}\delta^{\ast}$,
\begin{equation}\label{eq:4-60}
\Vert u - v_{u}\Vert_{a,\,\omega} \leq Cm^{-1/d}\frac{H^{\ast}}{\delta^{\ast}}\Vert u \Vert_{a,\omega^{\ast}},
\end{equation}
where the constant $C$ only depends on $\alpha$, $\beta$, $d$, and the shape-regularity of the mesh.
\end{lemma}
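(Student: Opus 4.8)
The plan is to combine the $L^{2}$-approximation estimate of \cref{lem:4-2-1} with the discrete Caccioppoli inequality of \cref{lem:3-s2-4}, exploiting that the approximating space $\Theta_{h}^{m}(\omega^{\ast})$ is a subspace of $W_{h}(\omega^{\ast})$. First I would fix $u\in W_{h}(\omega^{\ast})$ and take $v_{u}\in\Theta_{h}^{m}(\omega^{\ast})$ to be the $L^{2}(\omega^{\ast})$-best approximation furnished by \cref{lem:4-2-1}, so that \cref{eq:4-59} holds. The crucial observation is that, since $\Theta_{h}^{m}(\omega^{\ast})=\mathcal{P}^{A}_{h}\Psi^{m}_{h}(\omega^{\ast})\subset W_{h}(\omega^{\ast})$ and $u\in W_{h}(\omega^{\ast})$, the difference $u-v_{u}$ is itself discretely $A$-harmonic, i.e.\ $u-v_{u}\in W_{h}(\omega^{\ast})$. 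This membership is exactly what is needed to invoke the discrete Caccioppoli inequality, which holds only on $W_{h}(\omega^{\ast})$ and not on all of $V_{h,D}(\omega^{\ast})$.

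Next I would translate the geometry of the concentric cubes into the distance $\delta$ appearing in \cref{lem:3-s2-4}. For concentric (truncated) cubes of side lengths $H$ and $H^{\ast}$, the distance from $\omega$ to the interior part of $\partial\omega^{\ast}$ equals $\delta=(H^{\ast}-H)/2=\delta^{\ast}/2$. The mesh hypothesis of \cref{lem:3-s2-4}, namely $\max_{K\cap\omega^{\ast}\neq\emptyset}h_{K}\leq\frac{1}{4}\delta$, is then guaranteed by the standing assumption $h\leq\frac{1}{8}\delta^{\ast}$, since $\frac{1}{8}\delta^{\ast}=\frac{1}{4}\delta$ and $h$ dominates the local mesh sizes. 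Applying \cref{lem:3-s2-4} to $u-v_{u}$ then gives
\[
\Vert u-v_{u}\Vert_{a,\omega}\leq C\delta^{-1}\Vert u-v_{u}\Vert_{L^{2}(\omega^{\ast})}=\frac{2C}{\delta^{\ast}}\Vert u-v_{u}\Vert_{L^{2}(\omega^{\ast})}.
\]

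Finally I would substitute the $L^{2}$ bound \cref{eq:4-59} into this inequality. Using $C(m)=m^{-1/d}(1+o(1))$ and collecting the factors $\gamma_{d}^{1/d}/\sqrt{4\pi}$, $\alpha^{-1/2}$, and the Caccioppoli constant (all depending only on $\alpha$, $\beta$, $d$, and the shape regularity of the mesh) into a single constant $C$, I would arrive at \cref{eq:4-60}. I do not expect any genuinely hard analysis in this step: the estimate is essentially a one-line composition of the two cited lemmas, together with the arithmetic that turns $\delta^{-1}H^{\ast}$ into $H^{\ast}/\delta^{\ast}$.

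The only point that requires real care is the verification that $u-v_{u}\in W_{h}(\omega^{\ast})$; the discrete Caccioppoli inequality fails for arbitrary discrete functions, so the whole argument hinges on $\Theta_{h}^{m}(\omega^{\ast})$ lying inside the discrete $A$-harmonic space. A secondary, bookkeeping concern is the correct identification of $\delta$ in terms of $\delta^{\ast}$ so that the mesh threshold matches the stated bound $h\leq\frac{1}{8}\delta^{\ast}$, and the absorption of the $1+o(1)$ factor into the final $h$-independent constant.
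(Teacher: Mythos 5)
Your proof is correct and is exactly the argument the paper intends: the paper gives no explicit proof, stating only that the lemma follows by combining \cref{lem:4-2-1} with the discrete Caccioppoli inequality \cref{lem:3-s2-4}, which is precisely your composition. You also correctly flag the two points the paper leaves implicit, namely that $u-v_{u}\in W_{h}(\omega^{\ast})$ because $\Theta_{h}^{m}(\omega^{\ast})=\mathcal{P}^{A}_{h}\Psi^{m}_{h}(\omega^{\ast})\subset W_{h}(\omega^{\ast})$, and that $\delta=\delta^{\ast}/2$ turns the mesh condition of \cref{lem:3-s2-4} into the stated threshold $h\leq\frac{1}{8}\delta^{\ast}$.
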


In order to derive a low-dimensional subspace in $H^{1}_{DI}(\omega)$ with the approximation error decaying nearly exponentially, we iterate the approximation result in \cref{lem:4-2-2} on some nested concentric cubes between $\omega$ and $\omega^{\ast}$. For an integer $N\geq 1$, let $\{\omega^{j}\}_{j=1}^{N+1}$ denote the nested family of (truncated) concentric cubes with side length $H^{\ast}-\delta^{\ast} (j-1)/N$ for which $\omega=\omega^{N+1}\subset \omega^{N}\subset\cdots\subset\omega^{1} = \omega^{\ast}$, where $\delta^{\ast} = H^{\ast}-H$. We assume that $h$ is sufficiently small such that $4h\leq {dist}\,(\omega^{j+1},\omega^{j}/\partial \Omega) = \delta^{\ast}/(2N)$. 
Let $n=Nm$ and set
\begin{equation}\label{eq:4-61}
\mathcal{T}_{h}(n,\omega,\omega^{\ast}) = \Theta_{h}^{m}(\omega^{1})+ \cdots+\Theta_{h}^{m}(\omega^{N}),
\end{equation}
we have
\begin{lemma}\label{lem:4-2-3}
Let $u\in W_{h}(\omega^{\ast})$ and assume that $N\geq (\delta^{\ast}/H)^{2}$. Then there exists a $z_{u}\in \mathcal{T}_{h}(n,\omega,\omega^{\ast})$ such that
\begin{equation}\label{eq:4-62}
\Vert \chi(u -z_{u}) \Vert_{a,\omega} \leq \big(1+\frac{C_{1}}{2\sqrt{2N}}\big)\prod_{k=1}^{N-1} \big(1-\frac{k\delta^{\ast}}{NH^{\ast}}\big) \xi^{N} \Vert u \Vert_{a,\omega^{\ast}},
\end{equation}
where $C_{1}$ is the (positive) constant introduced in \cref{eq:1-3} and $\xi$ is given by
\begin{equation}\label{eq:4-63}
\xi = \xi(N,m)= CNm^{-1/d}H^{\ast}/\delta^{\ast}.
\end{equation}
\end{lemma}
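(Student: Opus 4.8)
The plan is to prove \cref{eq:4-62} by iterating the single-shell estimate of \cref{lem:4-2-2} across the nested family $\omega=\omega^{N+1}\subset\omega^{N}\subset\cdots\subset\omega^{1}=\omega^{\ast}$, reserving the partition-of-unity weight $\chi$ for the innermost shell. First I would set $u^{(1)}:=u\in W_{h}(\omega^{\ast})$ and, for $j=1,\dots,N-1$, apply \cref{lem:4-2-2} on the pair $(\omega^{j+1},\omega^{j})$, whose side lengths $H_{j}=H^{\ast}-(j-1)\delta^{\ast}/N$ and gap $H_{j}-H_{j+1}=\delta^{\ast}/N$ meet the hypothesis $h\le\frac18(\delta^{\ast}/N)$ thanks to the standing assumption $4h\le\delta^{\ast}/(2N)$. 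This yields $v^{(j)}\in\Theta_{h}^{m}(\omega^{j})$ with $\|u^{(j)}-v^{(j)}\|_{a,\omega^{j+1}}\le\xi_{j}\|u^{(j)}\|_{a,\omega^{j}}$, where $\xi_{j}=CNm^{-1/d}H_{j}/\delta^{\ast}=\xi\big(1-(j-1)\delta^{\ast}/(NH^{\ast})\big)$ by \cref{eq:4-63}. Setting $u^{(j+1)}:=u^{(j)}-v^{(j)}$ and chaining the bounds gives $\|u^{(N)}\|_{a,\omega^{N}}\le\big(\prod_{j=1}^{N-1}\xi_{j}\big)\|u\|_{a,\omega^{\ast}}$.

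A point requiring care, and one that separates the discrete argument from its continuous counterpart, is that the iteration needs each residual $u^{(j)}$ to be discrete $A$-harmonic on $\omega^{j}$ so that \cref{lem:4-2-2} (and hence the discrete Caccioppoli inequality \cref{lem:3-s2-4}) applies. Since $u^{(j)},v^{(j)}\in W_{h}(\omega^{j})$, I would verify that the restriction $u^{(j+1)}|_{\omega^{j+1}}$ stays in $W_{h}(\omega^{j+1})$: given a test function in $V_{h,DI}(\omega^{j+1})$, its extension by zero belongs to $V_{h,DI}(\omega^{j})$, because the faces of $\partial\omega^{j+1}$ interior to $\Omega$ lie strictly inside $\omega^{j}$ while the faces shared with $\partial\Omega$ are common to both cubes; testing $u^{(j+1)}$ against it then shows discrete $A$-harmonicity on the smaller cube.

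For the innermost shell I would choose $v^{(N)}\in\Theta_{h}^{m}(\omega^{N})$ to be the $L^{2}(\omega^{N})$-best approximation of $u^{(N)}$ supplied by \cref{lem:4-2-1}, which is precisely the function for which \cref{lem:4-2-2} provides the energy bound on $\omega=\omega^{N+1}$. Writing $w:=u^{(N)}-v^{(N)}=u-z_{u}$ with $z_{u}:=\sum_{j=1}^{N}v^{(j)}\in\mathcal{T}_{h}(n,\omega,\omega^{\ast})$, the product rule $\nabla(\chi w)=\chi\nabla w+w\nabla\chi$ and the triangle inequality in the $A$-weighted $L^{2}$ norm give
\begin{equation*}
\|\chi w\|_{a,\omega}\le\|w\|_{a,\omega}+\beta^{1/2}\|\nabla\chi\|_{L^{\infty}(\omega)}\|w\|_{L^{2}(\omega)},
\end{equation*}
upon using $0\le\chi\le1$ and $A\nabla\chi\cdot\nabla\chi\le\beta|\nabla\chi|^{2}$. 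The first term is bounded by $\xi_{N}\|u^{(N)}\|_{a,\omega^{N}}$ via \cref{lem:4-2-2}, and for the second I would use $\|\nabla\chi\|_{L^{\infty}}\le C_{1}/\operatorname{diam}(\omega)$ from \cref{eq:1-3} together with $\|w\|_{L^{2}(\omega)}\le\|w\|_{L^{2}(\omega^{N})}$ and the direct $L^{2}$ estimate of \cref{lem:4-2-1}. Comparing the resulting coefficient with $\xi_{N}=CNm^{-1/d}H_{N}/\delta^{\ast}$, the hypothesis $N\ge(\delta^{\ast}/H)^{2}$ is exactly what absorbs the second term into $\frac{C_{1}}{2\sqrt{2N}}\xi_{N}\|u^{(N)}\|_{a,\omega^{N}}$. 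Combining with the first paragraph and $\prod_{j=1}^{N}\xi_{j}=\xi^{N}\prod_{k=1}^{N-1}\big(1-k\delta^{\ast}/(NH^{\ast})\big)$ then gives \cref{eq:4-62}.

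I expect the main obstacle to be this final shell: tracking the constants in the $w\nabla\chi$ contribution sharply enough to see that $N\ge(\delta^{\ast}/H)^{2}$ produces exactly the prefactor $1+C_{1}/(2\sqrt{2N})$. This is delicate because it relies on the \emph{direct} $L^{2}$ bound of \cref{lem:4-2-1} on the innermost cube (rather than the Caccioppoli-amplified energy bound used on the outer shells) and on a careful comparison of the two length scales $\delta^{\ast}/N$ and $\operatorname{diam}(\omega)\sim H$.
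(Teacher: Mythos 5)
Your proposal is correct and follows essentially the same route as the paper's proof: iterate \cref{lem:4-2-2} over the nested cubes to accumulate the factors $\xi_j=\xi\bigl(1-(j-1)\delta^{\ast}/(NH^{\ast})\bigr)$, then treat the innermost shell by combining the direct $L^{2}$ bound of \cref{lem:4-2-1} with the discrete Caccioppoli estimate (\cref{cor:3-1}) and absorbing the $\nabla\chi$ contribution via $N\geq(\delta^{\ast}/H)^{2}$. Your explicit verification that each residual restricts to a discrete $A$-harmonic function on the next smaller cube is a point the paper asserts without proof, but it does not change the argument.
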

\begin{proof}
Applying \cref{lem:4-2-2} to $\omega^{1}$ and $\omega^{2}$ with side lengths $H^{\ast}$ and $H^{\ast}-\delta^{\ast}/N$, we see that there exists a $v_{u}^{1}\in \Theta_{h}^{m}(\omega^{1})$ such that
\begin{equation}\label{eq:4-64}
\Vert u - v_{u}^{1} \Vert_{a,\omega^{2}} \leq \xi \Vert u \Vert_{a,\omega^{1}} = \xi \Vert u \Vert_{a,\omega^{\ast}}.
\end{equation}
Since $(u-v_{u}^{1})|_{\omega^{2}}\in W_{h}(\omega^{2})$, it follows from \cref{lem:4-2-2} again that we can find a function $v_{u}^{2}\in \Theta_{h}^{m}(\omega^{2})$ such that
\begin{equation}\label{eq:4-65}
\begin{array}{lll}
{\displaystyle  \Vert u-v_{u}^{1} - v_{u}^{2} \Vert_{a,\omega^{3}} \leq \big(1-\frac{\delta^{\ast}}{NH^{\ast}}\big)\xi \Vert u-v_{u}^{1} \Vert_{a,\omega^{2}} \leq \big(1-\frac{\delta^{\ast}}{NH^{\ast}}\big)\xi^{2} \Vert u \Vert_{a,\omega^{\ast}}.}
\end{array}
\end{equation}
Applying the same argument above until $k=N-1$, we see that there exist $v_{u}^{k}\in \Theta_{h}^{m}(\omega^{k})$, $k=1,\cdots,N-1$, such that
\begin{equation}\label{eq:4-66}
\Big \Vert u - \sum_{k=1}^{N-1}v_{u}^{k} \Big\Vert_{a,\omega^{N}} \leq \prod_{k=1}^{N-2} \big(1-\frac{k\delta^{\ast}}{NH^{\ast}}\big)\xi^{N-1} \Vert u \Vert_{a,\omega^{\ast}}.
\end{equation}
Finally, combining \cref{lem:4-2-1}, \cref{cor:3-1}, and \cref{eq:4-66}, we can find a $v_{u}^{N}\in \Theta_{h}^{m}(\omega^{N})$ such that
\begin{equation}\label{eq:4-67}
\begin{array}{lll}
{\displaystyle  \Big\Vert \chi\big(u - \sum_{k=1}^{N-1}v_{u}^{k} -  v_{u}^{N}\big) \Big\Vert_{a,\omega}\leq \big(CN/\delta^{\ast} + \beta^{1/2}\Vert \nabla\chi\Vert_{L^{\infty}(\omega)}\big)\Big\Vert u - \sum_{k=1}^{N}v_{u}^{k}\Big\Vert_{L^{2}(\omega^{N})}}\\[4mm]
{\displaystyle \leq C\big(N/\delta^{\ast} + \Vert \nabla\chi\Vert_{L^{\infty}(\omega)}\big)m^{-1/d}\big(H^{\ast}-\delta^{\ast}(N-1)/N\big)\Big\Vert u - \sum_{k=1}^{N-1}v_{u}^{k} \Big\Vert_{a,\omega^{N}} }\\[3mm]
{\displaystyle \leq \big(1+ \frac{C_{1}\delta^{\ast}}{2\sqrt{2}NH}\big)\prod_{k=1}^{N-1} \big(1-\frac{k\delta^{\ast}}{NH^{\ast}}\big)\xi^{N} \Vert u \Vert_{a,\omega^{\ast}}}\\[3mm]
{\displaystyle \leq \big(1+\frac{C_{1}}{2\sqrt{2N}}\big)\prod_{k=1}^{N-1} \big(1-\frac{k\delta^{\ast}}{NH^{\ast}}\big) \xi^{N} \Vert u \Vert_{a,\omega^{\ast}},} 
\end{array}
\end{equation}
where we have used the inequality $\Vert \nabla \chi\Vert_{L^{\infty}(\omega)} \leq {C_{1}}/{diam\,(\omega)}$ and the assumption that $N\geq (\delta^{\ast}/H)^{2}$. Therefore, $z_{u} = \sum_{k=1}^{N}v_{u}^{k} \in \mathcal{T}_{h}(n,\omega,\omega^{\ast})$ satisfies \cref{eq:4-62}.
\end{proof}

{\it Proof of \cref{thm:4-2-1}}. We define the desired approximation space as 
\begin{equation}
\hat{Q}(n) = \chi \mathcal{T}_{h}(n,\omega,\omega^{\ast})\subset H^{1}_{DI}(\omega).
\end{equation}
It follows from the definition of the $n$-width and \cref{lem:4-2-3} (increasing the constant in $\xi$ if necessary) that 
\begin{equation}\label{eq:4-68}
\begin{array}{lll}
{\displaystyle d_{n}(\omega, \omega^{\ast}) \leq \sup_{u\in W_{h}(\omega^{\ast})} \inf_{v\in \hat{Q}(n)}\frac {\Vert \chi u-v\Vert_{a,\omega}}{\Vert u \Vert_{a,\omega^{\ast}}}\,\leq \,\frac{1+C_{1}}{2\sqrt{2N}}\prod_{k=1}^{N-1} \big(1-\frac{k\delta^{\ast}}{NH^{\ast}}\big) \xi^{N}.}
\end{array}
\end{equation}

Using an almost identical argument as in the proof of Theorem 3.5 of \cite{ma2021novel}, it can be shown that for any $\epsilon\in (0,\frac{1}{d+1})$, if 
\begin{equation}\label{eq:4-69}
n\geq n_{\epsilon} = \max \big\{ \big(CH^{\ast}/\delta^{\ast}\big)^{ \frac{d}{\epsilon(1+d)}},  \,\,\big(\delta^{\ast}/H\big)^{\frac{2(d+1)}{1-\epsilon(d+1)}}\big\}
\end{equation}
and if $h$ is sufficiently small, then 
\begin{equation}\label{eq:4-70}
\begin{array}{lll}
{\displaystyle \frac{1+C_{1}}{2\sqrt{2N}}\prod_{k=1}^{N-1} \big(1-\frac{k\delta^{\ast}}{NH^{\ast}}\big) \xi^{N}  \leq e^{2}(1+ C_{1}) e^{-n^{(\frac{1}{d+1}-\epsilon)}} e^{-n^{(\frac{1}{d+1}-\epsilon)}R(\rho)},}
\end{array}
\end{equation}
where $R(s) = 1+{s\log(s)}/{(1-s)}$ and $\rho = H/H^{\ast}$. Combining \cref{eq:4-68,eq:4-70} yields \cref{eq:4-57}. \qquad \qquad $\square$

\section{Eigensolver based on mixed formulation}\label{sec-5}
In this section, we present an efficient and accurate way of solving the local eigenvalue problems \cref{eq:3-5}. As is done in the preceding section, we introduce a Lagrange multiplier to relax the $A$-harmonic constraint and rewrite \cref{eq:3-5} in an equivalent mixed formulation: Find $\lambda_{h}\in\mathbb{R}$, $\phi_{h}\in V_{h,D}(\omega^{\ast})$, and $p_{h}\in V_{h,DI}(\omega^{\ast})$ such that
\begin{equation}\label{eq:5-1}
\begin{aligned}
a_{\omega^{\ast}}(\phi_h,v_{h}) + a_{\omega^{\ast}}(v_h,p_h) =&\, \lambda_{h}a_{\omega}(\chi \phi_{h}, \chi v_h)\quad\, \forall v_h\in V_{h,D}(\omega^{\ast}),\\
a_{\omega^{\ast}}(\phi_{h},\xi_h) =&\;0\qquad\qquad\qquad\quad \;\;\forall \xi_{h}\in V_{h,DI}(\omega^{\ast}).
\end{aligned}
\end{equation}
We partition the degrees of freedom (DOFs) associated with $V_{h,D}(\omega^{\ast})$ into a set $\mathcal{B}_{1}$ of DOFs associated with $V_{h,DI}(\omega^{\ast})$ and the remainder $\mathcal{B}_{2}$, which contains those DOFs that are "active" on the interior boundary of $\omega^{\ast}$, i.e.,
\begin{equation}\label{eq:5-2}
\begin{array}{lll}
{\displaystyle \mathcal{B}_{1}:=\{k=1,\cdots,\mathcal{N}\;:\; \varphi_{k}|_{\omega^{\ast}}\in V_{h,DI}(\omega^{\ast})\},}\\[2mm]
{\displaystyle \mathcal{B}_{2}:=\{k=1,\cdots,\mathcal{N}\;:\;\varphi_{k}|_{\omega^{\ast}}\in V_{h,D}(\omega^{\ast}),\; \varphi_{k}|_{\omega^{\ast}}\notin V_{h,DI}(\omega^{\ast}) \},}
\end{array}
\end{equation}
where $\{\varphi_{j}\}_{j=1}^{\mathcal{N}}$ is the basis of $V_{h}$ introduced at the beginning of \cref{sec-3}. In addition, we define $n_{1} =\#\mathcal{B}_{1}$ and $n_{2}=\# \mathcal{B}_{2}$. With these notations, \cref{eq:5-1} can be formulated as a matrix eigenvalue problem: Find $\lambda\in\mathbb{R}$, ${\bm \phi} = ({\bm \phi}_{1}, {\bm \phi}_{2})\in\mathbb{R}^{n_{1}+n_{2}}$, and ${\bm p}\in \mathbb{R}^{n_1}$ such that
\begin{equation}\label{eq:5-3}
{\left( \begin{array}{ccc}
{\bf A}_{11} & {\bf A}_{12} & {\bf A}_{11}\\
{\bf A}_{21} & {\bf A}_{22} & {\bf A}_{21}\\
{\bf A}_{11} & {\bf A}_{12} & {\bf 0}
\end{array} 
\right )}
{\left( \begin{array}{c}
{\bm \phi}_{1} \\
{\bm \phi}_{2} \\
{\bm p}
\end{array} 
\right )} = \lambda
{\left( \begin{array}{ccc}
{\bf B}_{11} & {\bf 0} & {\bf 0}\\
{\bf 0} & {\bf 0} & {\bf 0}\\
{\bf 0} & {\bf 0} & {\bf 0}
\end{array} 
\right )}
{\left( \begin{array}{c}
{\bm \phi}_{1} \\
{\bm \phi}_{2} \\
{\bm p}
\end{array} 
\right )},
\end{equation}
where ${\bf A}_{ij}=\big(a_{\omega^{\ast}}(\varphi_{k}, \varphi_{l})\big)_{k\in \mathcal{B}_{i}, l\in \mathcal{B}_{j}}$ and ${\bf B}_{11}=\big(a_{\omega}(\chi\varphi_{k}, \chi\varphi_{l})\big)_{k\in \mathcal{B}_{1}, l\in \mathcal{B}_{1}}$. Here the blocks $\mathbf{B}_{12}$, $\mathbf{B}_{21}$, and $\mathbf{B}_{22}$ vanish due to the fact that the interior boundaries of $\omega$ and $\omega^{\ast}$ are disjoint such that ${\rm supp}\,(\varphi_{k})\cap \omega = \emptyset$ for all $k\in \mathcal{B}_{2}$. By block-elimination, it follows that 
\begin{equation}\label{eq:5-4}
\mathbf{A}_{11}{\bm p} = \lambda \mathbf{B}_{11}{\bm \phi}_{1} \quad \Longleftrightarrow \quad \mathbf{B}_{11}{\bm \phi}_{1} = \lambda^{-1} \mathbf{A}_{11}{\bm p},
\end{equation}
where $ {\bm \phi}_{1}$ can be computed from ${\bm p}$ by 
\begin{equation}\label{eq:5-5}
{\bf A}{\left( \begin{array}{c}
{\bm \phi}_{1} \\
{\bm \phi}_{2} 
\end{array} 
\right )} = 
{\left( \begin{array}{c}
{\bf 0} \\
-\mathbf{A}_{21}{\bm p}
\end{array} 
\right )}, 
\quad {\rm with}\;\,{\bf A}={\left( \begin{array}{cc}
{\bf A}_{11} & {\bf A}_{12} \\
{\bf A}_{21} & {\bf A}_{22} 
\end{array} 
\right )}.
\end{equation}

\Cref{eq:5-4} can be viewed as a generalized eigenvalue problem concerning ${\bm p}$ where a function involving solving \cref{eq:5-5} is specified for the matrix-vector product on the left-hand side. Note that the block $\mathbf{A}_{11}$ is the matrix version of the bilinear form $a_{\omega^{\ast}}(\cdot,\cdot): V_{h,DI}(\omega^{\ast})\times V_{h,DI}(\omega^{\ast}) \rightarrow \mathbb{R}$ and thus it is positive definite. If $\partial \omega^{\ast} \cap \partial \Omega_{D}\neq \emptyset$, then the matrix ${\bf A}$ is positive definite and we can preform an $LDL^{T}$ decomposition ${\bf A}={\bf L}{\bf D}{\bf L}^{T}$, where ${\bf L}$ is a lower triangular matrix and $\mathbf{D}$ is a diagonal matrix. In this way, ${\bm \phi}_{1}$ can be computed by solving an upper and lower triangular system:
\begin{equation}\label{eq:5-8}
{\left( \begin{array}{c}
{\bm \phi}_{1} \\
{\bm \phi}_{2} 
\end{array} 
\right )} = \mathbf{L}^{-T}({\bf L}{\bf D})^{-1}
{\left( \begin{array}{c}
{\bf 0} \\
-\mathbf{A}_{21}{\bm p}
\end{array} 
\right )}.
\end{equation}

If $\partial \omega^{\ast}\cap \partial \Omega_{D} = \emptyset$, the "Neumann" matrix ${\bf A}$ is positive semi-definite and thus not invertible. By taking $v_{h}=1$ in \cref{eq:5-1}, we observe that all the eigenfunctions corresponding to positive eigenvalues satisfy $a_{\omega}(\chi \phi_{h}, \chi) = 0$. To make the linear system \cref{eq:5-5} solvable, we impose this constraint on the solution and introduce another variable $c\in \mathbb{R}$ to modify \cref{eq:5-5}, such that 
\begin{equation}\label{eq:5-12}
{\bf \widehat{A}}{\left( \begin{array}{c}
{\bm \phi}_{1} \\
{\bm \phi}_{2} \\
c
\end{array} 
\right )} = 
{\left( \begin{array}{c}
{\bf 0} \\
-\mathbf{A}_{21}{\bm p}\\
0
\end{array} 
\right )}, 
\quad {\rm with}\;\,\widehat{\bf A}={\left( \begin{array}{ccc}
{\bf A}_{11} & {\bf A}_{12}&  {\bf M}_{1}\\
{\bf A}_{21} & {\bf A}_{22} & {\bf M}_{2} \\[1mm]
{\bf M}^{T}_{1} & {\bf M}_{2}^{T} & 0
\end{array} 
\right )},
\end{equation}
where ${\bf M}_{i} = \big(a_{\omega}(\chi\varphi_{k}, \chi)\big)_{k\in \mathcal{B}_{i}}\in \mathbb{R}^{n_{i}} \;(i=1,2)$. The new matrix $\widehat{\bf A}$ is symmetric and invertible and thus we can perform an $LDL^{T}$ decomposition $\widehat{\bf A}={\bf L}{\bf D}{\bf L}^{T}$. As a result, the linear system \cref{eq:5-12} can be solved as follows:
\begin{equation}\label{eq:5-13}
{\left( \begin{array}{c}
{\bm \phi}_{1} \\
{\bm \phi}_{2} \\
c
\end{array} 
\right )} = {\bf L}^{-T}({\bf L}{\bf D})^{-1}
{\left( \begin{array}{c}
{\bf 0} \\
-\mathbf{A}_{21}{\bm p}\\
0
\end{array} 
\right )}.
\end{equation}
Finally, we need to add the constant function corresponding to the eigenvalue 0 to the local approximation space as it has been excluded from the eigenfunctions due to the constraint $a_{\omega}(\chi \phi_{h}, \chi) = 0$.

\begin{rem}
The size of the reduced eigenproblem \cref{eq:5-4}\,-\,\cref{eq:5-5} is about half of that of the augmented eigenproblem \cref{eq:5-3} and the matrix ${\bf A}_{11}$ in the reduced eigenproblem is positive definite, thereby making it much easier to solve. In practice, a sparse $LDL^{T}$ factorization \cite{davis2016survey} with a permutation of the rows and columns of the sparse matrix $\mathbf{A}$ can be used, which can greatly reduce the number of nonzeros in the Cholesky factor and the work in the factorization. Furthermore, we can always subdivide the subdomains such that the resulting matrices are amenable to $LDL^{T}$ factorization.
\end{rem}

In contrast to previous numerical attempts \cite{babuska2011optimal,babuvska2020multiscale,calo2016randomized,chen2020randomized} to solve the eigenproblem by first approximating the discrete $A$-harmonic spaces, we incorporate the discrete $A$-harmonic constraint into the eigenproblem here. Furthermore, instead of solving the augmented eigenproblem directly, we solve the equivalent reduced eigenproblem at a much lower computational cost. Therefore, our method is more efficient, and more importantly, more accurate without introducing additional errors. Building on the accurate solution of the eigenproblem, we see from \cref{thm:3-2} that the error in approximating the fine-scale solution locally by the local approximation space is bounded by $1/\lambda^{1/2}_{h,n_{i}+1}$, where $\lambda_{h,n_{i}+1}$ denotes the eigenvalue corresponding to the first eigenfunction that is not included in the local approximation space. Thus, we are able to obtain $a \;posteriori$ an upper bound on the local approximation error from the computed eigenvalues, which can then be used as a criterion for determining how many eigenfunctions to include. This is a significant advantage of the MS-GFEM over other multiscale methods, provided the eigenproblems are solved accurately.

Another noteworthy feature of our method is that due to the nearly exponential decay of the eigenvalues (see \cref{thm:3-2,thm:4-2-1}), the relative gap between two consecutive
eigenvalues is in general large. This is a favorable property that is not enjoyed by general eigenvalue problems. It ensures that classical iterative methods for solving the eigenproblems, such as subspace iteration or the Lanczos method, converge very fast.

\section{Numerical experiments}\label{sec-6}
\begin{figure}[!htbp]
\centering
\includegraphics[scale=0.35]{./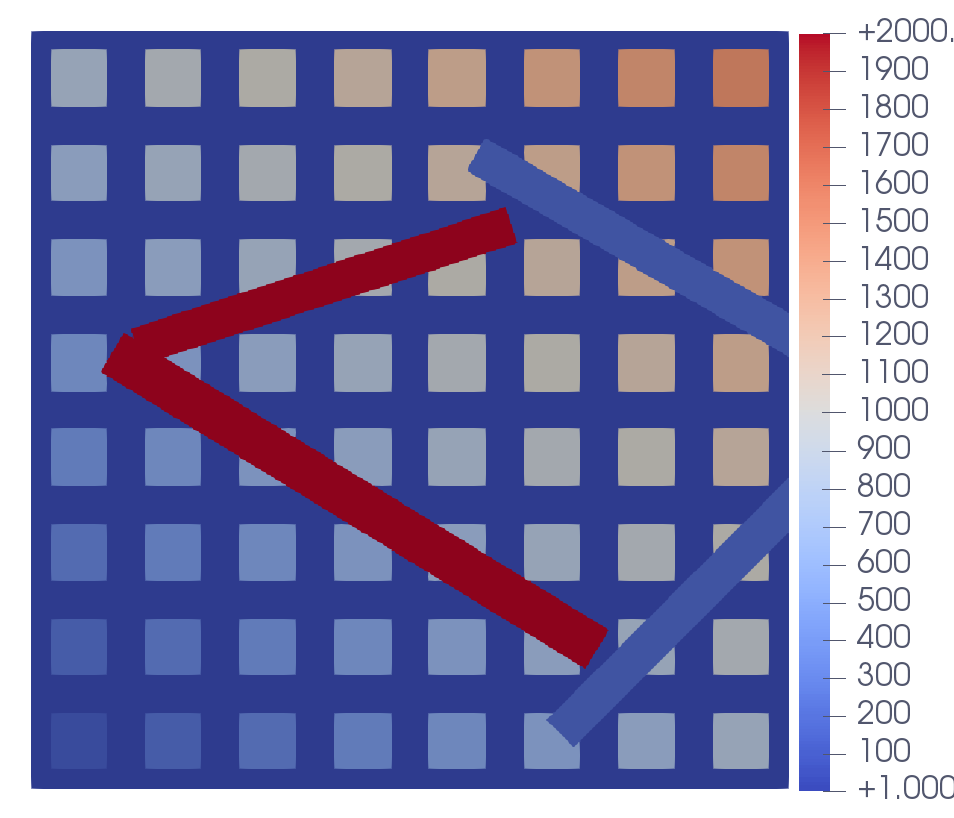}~\hspace{2ex}
\includegraphics[scale=0.35]{./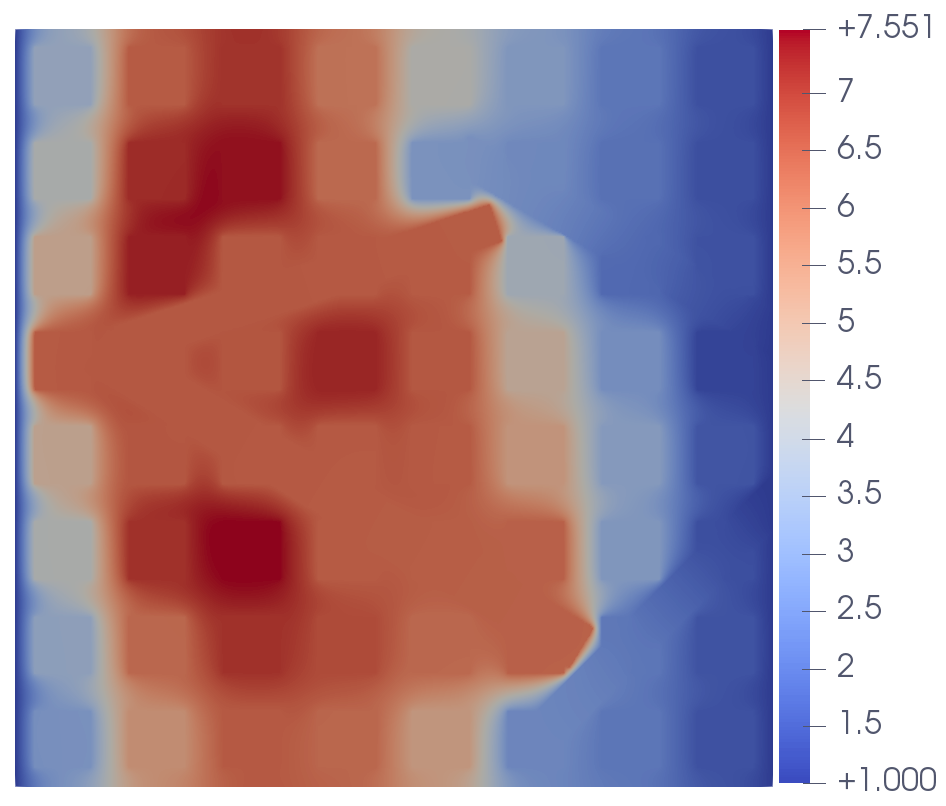}
\caption{The coefficient $A({\bm x}) = a({\bm x}) I$ (left) and the fine-scale solution $u^{e}_{h}$ (right).}\label{fig:6-1}
\end{figure}
In this section, we present numerical examples to confirm the theoretical results and to demonstrate the effectiveness of the method. We consider the following steady-state heat conduction problem on $\Omega=[0,1]^{2}$:
\begin{equation}\label{eq:6-1}
\left\{
\begin{array}{lll}
{\displaystyle -{\rm div}(A({\bm x})\nabla u({\bm x})) = f({\bm x}),\;\quad {\rm in}\,\;\Omega }\\[2mm]
{\displaystyle {\bm n} \cdot A({\bm x})\nabla u({\bm x})=1, \qquad \quad \quad \;{\rm on}\;\,\partial \Omega_{N}}\\[2mm]
{\displaystyle u({\bm x}) = 1, \;\;\;\qquad \qquad \qquad \qquad {\rm on}\;\,\partial \Omega_{D},}
\end{array}
\right.
\end{equation}
where $\partial \Omega_{N} = \big\{(x_{1}, x_{2})\in \Omega: x_{2}=0 \;{\rm or}\; 1\big\}$ and $\partial \Omega_{D} = \big\{(x_{1}, x_{2})\in \Omega: x_{1}=0 \;{\rm or}\; 1\big\}$. The high-contrast heterogeneous coefficient $A({\bm x})$ is displayed in \cref{fig:6-1} (left) and the source term $f({\bm x})$ is given by
\begin{equation}\label{eq:6-2}
f({\bm x}) = 10^{3}\times\exp\big(-10(x_{1}-0.15)^{2}-10(x_{2}-0.55)^{2}\big),
\end{equation}
leading to the multiscale fine-scale solution $u^{e}_{h}$ displayed in \cref{fig:6-1} (right).

All the local computations in the discrete MS-GFEM are carried out on a fine uniform Cartesian grid $\tau_{h}$ with $h=1/500$. The domain is first divided into $M=m^{2}$ square non-overlapping domains resolved by the mesh, which are then overlapped by 2 layers of mesh elements to form an overlapping decomposition $\{ \omega_{i}\}$. The overlapping subdomains $\omega_{i}$ are extended by $\ell$ layers of fine mesh elements to create the larger oversampling domains $\omega_{i}^{\ast}$ on which the local problems are solved. The local approximation space on each subdomain is constructed by $n_{\rm loc}$ eigenfunctions of the eigenproblem \cref{eq:3-5} where a local partition of unity operator is used \cite{ma2021novel}. We consider the fine-scale FE approximation $u^{e}_{h}$ as the reference solution and define the error between $u^{e}_{h}$ and the discrete MS-GFEM approximation $u^{G}_{h}$ as 
\begin{equation}\label{eq:6-3}
\mathbf{error} := {\Vert u^{e}_{h} - u_{h}^{G}\Vert_{H^{1}(\Omega)}}\,\big/\,{\Vert u^{e}_{h}\Vert_{H^{1}(\Omega)}}.
\end{equation}

\begin{figure}
\begin{center}
\includegraphics[scale = 0.32]{./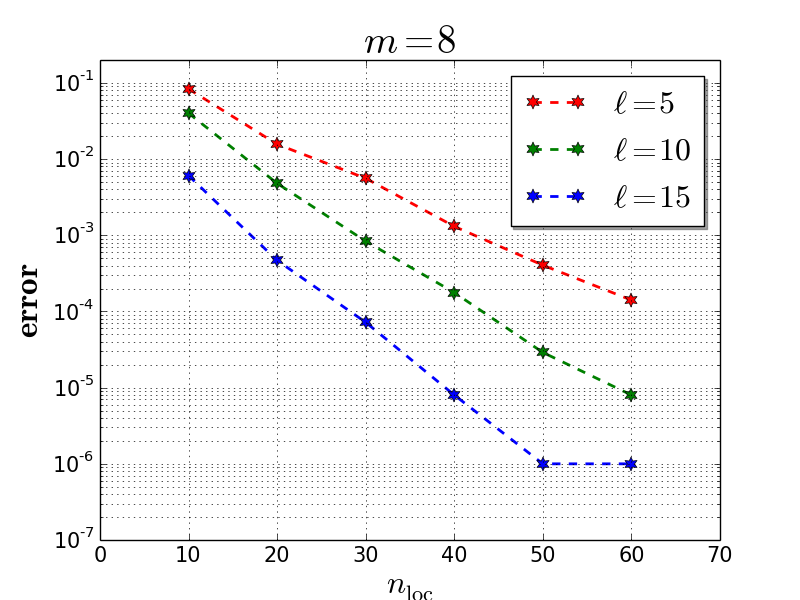}~\hspace{-5mm}
\includegraphics[scale=0.32] {./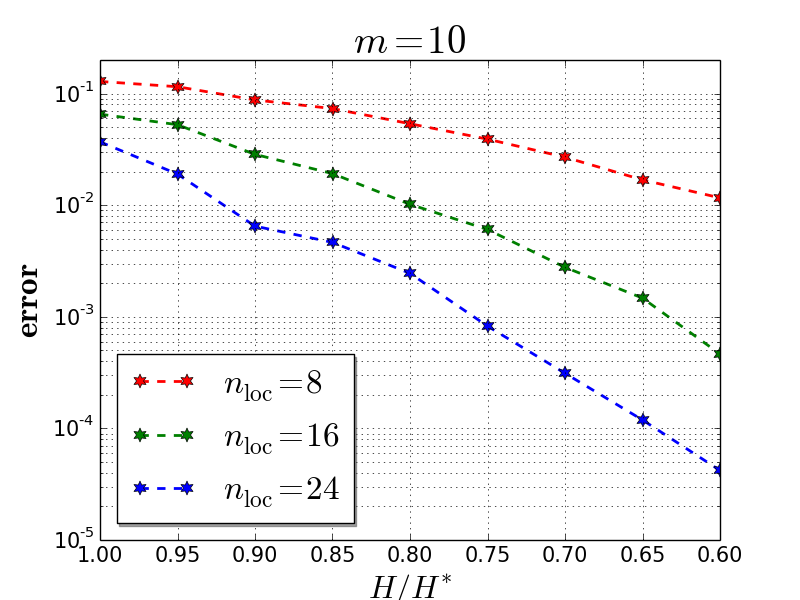}
\caption{Left: $\mathbf{error}$ versus $n_{\rm loc}$ with $m=8$; right: $\mathbf{error}$ versus $H/H^{\ast}$ with $m=10$.}\label{fig:6-2}
\end{center}
\end{figure}

In \cref{fig:6-2} (left), the errors are plotted as functions of the dimension of local spaces on a semilogarithmic scale for different oversampling sizes and a fixed number of subdomains. We observe that the errors decay with respect to $n_{\rm loc}$ at a rate of $\exp(-bn_{\rm loc})$, which is much faster than the rate guaranteed by \cref{thm:4-2-1}. Next we vary the oversampling size and plot the errors as functions of $H/H^{\ast}$ for different dimensions of local spaces with $m=10$ in \cref{fig:6-2} (right). Here $H$ and $H^{\ast}$ denote the side lengths of the subdomains $\omega_{i}$ and the oversampling domains $\omega_{i}^{\ast}$, respectively. We observe that the errors decay nearly exponentially with respect to $H/H^{\ast}$ as expected and the decay rate is higher with larger $n_{\rm loc}$, which agrees well with the established theoretical analysis.

\begin{figure}
\begin{center}
\includegraphics[scale=0.32] {./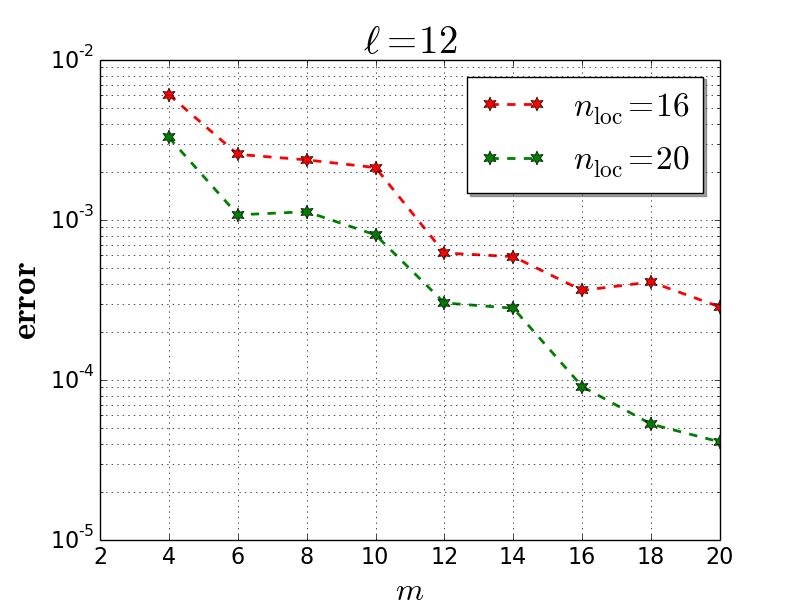}~\hspace{-5mm}
\includegraphics[scale=0.32] {./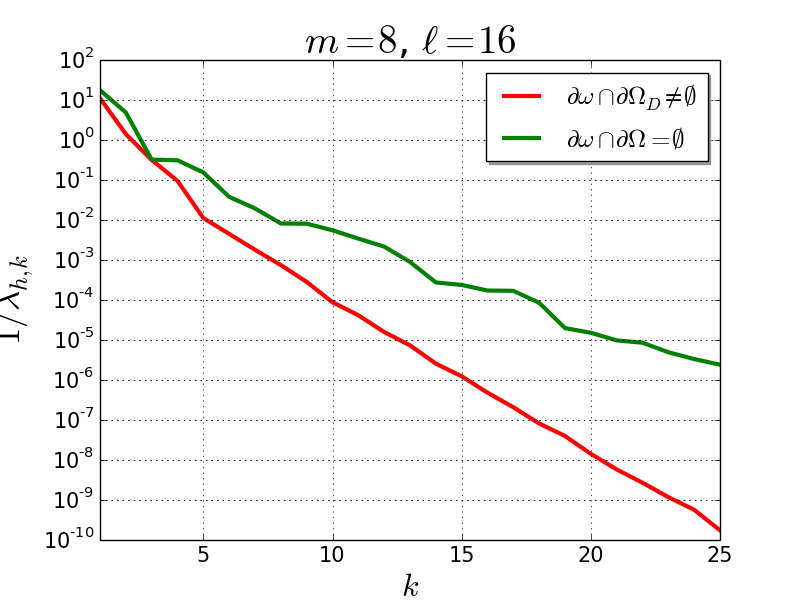}
\caption{Left: $\mathbf{error}$ versus $m$ with $\ell=12$; right: $1/\lambda_{h,k}$ versus $k$ with $m=8$ and $\ell=16$. $M=m^{2}$ is the number of subdomains and $\lambda_{h,k}$ denotes the $k$-th eigenvalue of the local eigenproblems.}\label{fig:6-3}
\end{center}
\end{figure}

In \cref{fig:6-3} (left), we study how the error varies with the number of subdomains ($M=m^{2}$) for a fixed oversampling size. One can observe that overall, the errors decrease as the number of subdomains increases. Note that in this case, the quantity $H/H^{\ast}$ decreases as $m$ increases since the oversampling size $\ell$ is fixed. Finally, we show the reciprocals of the eigenvalues of the local eigenproblems in an interior subdomain and a subdomain intersecting the Dirichlet boundary $\partial \Omega_{D}$ in \cref{fig:6-3} (right), again on a semilogarithmic scale. We clearly see that the eigenvalues decay rapidly and that the nearly exponential decay rate is higher for the subdomain intersecting $\partial \Omega_{D}$ than for the interior subdomain. 


\section{Conclusions}
In this paper, error estimates of the discrete MS-GFEM based on FE approximations of the local eigenvalue problems have been derived. It has been shown that the error of the discrete MS-GFEM is bounded by the error of the fine-scale FE approximation of the exact solution and the local errors in approximating the fine-scale solution. The local approximation errors in the discrete method converge toward those in the continuous method as $h\rightarrow 0$ and decay nearly exponentially with respect to the dimension of the local spaces. To solve the local eigenvalue problems efficiently and accurately, a novel method based on a mixed formulation of the generalized eigenproblems has been proposed. 

In a similar way that the continuous MS-GFEM aims to approximate the exact continuous solution, the discrete MS-GFEM aims to approximate the fine-scale FE solution. The significance of the nearly exponential decay rate of the local approximation errors is that typically only a handful of eigenfunctions are required per subdomain to achieve moderate error tolerances, leading to a small overall dimension of the discrete MS-GFEM space. In fact, the discrete MS-GFEM can be considered as an efficient two-level domain-decomposition type method for approximating the finite element solution of the problem. Compared to the traditional two-level overlapping Schwarz methods, our method solves the local problems and the global coarse problem only once without using Krylov methods. Therefore, the computation and communication costs in a parallel implementation can be dramatically reduced. Moreover, the local approximation spaces can be reused if many problems with different right-hand sides need to be solved, or if the coefficient changes only locally in some parts of the domain.

\appendix

\bibliographystyle{siamplain}
\bibliography{references}
\end{document}